\newcommand{\nc}{\newcommand}
\nc{\browntext}[1]{\textcolor{brown}{#1}}
\nc{\greentext}[1]{\textcolor{green}{#1}}
\nc{\redtext}[1]{\textcolor{red}{#1}}
\nc{\bluetext}[1]{\textcolor{blue}{#1}}
\nc{\brown}[1]{\browntext{ #1}}
\nc{\green}[1]{\greentext{ #1}}
\nc{\red}[1]{\redtext{ #1}}
\nc{\blue}[1]{\bluetext{ #1}}
\nc{\zb}[1]{\redtext{From zb: #1}}
\newcommand{\TT}{\operatorname{\texttt{\rm T}}\nolimits}
\newtheorem{theorem}{Theorem}  [section]
\newtheorem{corollary}[theorem]{Corollary}
\newtheorem{lemma}[theorem]{Lemma}
\newtheorem{proposition}[theorem]{Proposition}
\newtheorem{definition}[theorem]{Definition}
\newtheorem{example}[theorem]{Example}
\theoremstyle{remark}
\newtheorem{remark}[theorem]{Remark}
\numberwithin{equation}{section}
\newenvironment{customthm}[1]
{\innercustomthm}
{\endinnercustomthm}
\renewcommand{\ker}{\operatorname{Ker}\nolimits}
\renewcommand{\mod}{\operatorname{mod}\nolimits}
\renewcommand{\Im}{\operatorname{Im}\nolimits}
\newcommand{\res}{\operatorname{res}\nolimits}
\newcommand{\RHom}{\operatorname{RHom}\nolimits}
\newcommand{\Aut}{\operatorname{Aut}\nolimits}
\newcommand{\Id}{\operatorname{Id}\nolimits}
\newcommand{\dimv}{\operatorname{\underline{dim}}\nolimits}
\newcommand{\Ext}{\operatorname{Ext}\nolimits}
\newcommand{\coker}{\operatorname{Coker}\nolimits}
\newcommand{\rep}{\operatorname{rep}\nolimits}
\newcommand{\aut}{\operatorname{Aut}\nolimits}
\newcommand{\Hom}{\operatorname{Hom}\nolimits}
\newcommand{\Iso}{\operatorname{Iso}\nolimits}
\newcommand{\Fac}{\operatorname{Fac}\nolimits}
\newcommand{\add}{\operatorname{add}\nolimits}
\def \cb{{\mathcal B}}
\def \ce{{\mathcal E}}
\def \cI{{\mathcal I}}
\def \bS{{\mathbf S}}
\def \bl{{\mathbf l}}
\def \bm{{\mathbf m}}
\def \bn{{\mathbf n}}
\def \ba{{\mathbf a}}
\def \cs{{\mathcal S}}
\def \cu{{\mathcal U}}
\def \cv{{\mathcal V}}
\def \y{{B}}
\newcommand{\coh}{\operatorname{coh}\nolimits}
\newcommand{\iH}{{}^{\imath}\widetilde\ch}
\newcommand{\tMHL}{{}^\imath\widetilde{\ch}(\bfk \QK)}
\newcommand{\tMHLJ}{{}^\imath\widetilde{\ch}(\bfk \QJ)}
\newcommand{\pd}{\operatorname{proj.dim}\nolimits}
\newcommand{\ind}{\operatorname{inj.dim}\nolimits}
\newcommand{\ch}{{\mathcal H}}
\newcommand{\ca}{{\mathcal A}}
\newcommand{\calc}{{\mathcal C}}
\newcommand{\mbf}{\mathbf}
\newcommand{\mrm}{\mathrm}
\newcommand{\End}{\mrm{End}}
\newcommand{\de}{\delta}
\def \co{{\mathcal O}}
\newcommand{\haH}{\widehat{H}}
\newcommand{\haT}{\widehat{\Theta}}
\newcommand{\haP}{\widehat{P}}
\renewcommand{\P}{\mathbb P}
\newcommand{\PL}{\bbP^1_{\bfk}}
\newcommand{\LaJ}{\Lambda_{\texttt{J}}}
\newcommand{\LaK}{\Lambda_{\texttt{Kr}}}
\newcommand{\N}{\mathbb N}
\newcommand{\bbZ}{\mathbb Z}
\newcommand{\ov}{\overline}
\newcommand{\qbinom}[2]{\begin{bmatrix} #1\\#2 \end{bmatrix} }
\newcommand{\Q}{\mathbb Q}
\newcommand{\QJ}{Q_{{\texttt{J}}}}
\newcommand{\QK}{Q_{{\texttt{Kr}}}}
\newcommand{\sll}{\mathfrak{sl}}
\newcommand{\K}{\mathbb K}
\newcommand{\F}{\mathbb F}
\newcommand{\bs}{\mathbf s}
\newcommand{\arxiv}[1]{\href{http://arxiv.org/abs/#1}{\tt arXiv:\nolinkurl{#1}}}
\newcommand{\Z}{\mathbb Z}
\newcommand{\B}{\mbf V}
\newcommand{\D}{\mbf D}
\def \bS{{\mathbf S}}
\def \cd{{\mathcal D}}
\def \ck{{\mathcal K}}
\def \cs{{\mathcal S}}
\newcommand{\tK}{\mathbb K}
\def \ct{{\mathcal T}}
\def \BG{{\mathbb G}}
\def \BS{{\mathbb S}}
\def \BF{{\digamma}}
\newcommand{\tUi}{\widetilde{{\mathbf U}}^\imath}
\newcommand{\sqq}{{\bf v}}
\newcommand{\tMHX}{\operatorname{{}^\imath\widetilde{\ch}(\PL)}\nolimits}
\newcommand{\tCMH}{\operatorname{{}^\imath\widetilde{\calc}(\bfk \QK)}\nolimits}
\newcommand{\tCMHP}{\operatorname{{}^\imath\widetilde{\calc}(\PL)}\nolimits}
\def\co{{\mathcal O}}
\def\coh{{\rm coh}}
\def\vec{{\rm vec}}
\def\tor{{\rm tor}}
\def\rk{{\rm rk}}
\def\bfk{\Bbbk}
\def\bbZ{{\mathbb Z}}
\def\bbP{{\mathbb P}}
\def\bbF{{\mathbb F}}
\newcommand{\tUiD}{\operatorname{{}^{{Dr}}\tUi}\nolimits}
\begin{document}
	\title[$\imath$Hall algebra of the projective line and $q$-Onsager algebra]{$\imath$Hall algebra of the projective line \\and $q$-Onsager algebra}
	
	\author[Ming Lu]{Ming Lu}
	\address{Department of Mathematics, Sichuan University, Chengdu 610064, P.R. China}
	\email{luming@scu.edu.cn}

	\author[Shiquan Ruan]{Shiquan Ruan}
	\address{ School of Mathematical Sciences,
		Xiamen University, Xiamen 361005, P.R. China}
	\email{sqruan@xmu.edu.cn}
	
	\author[Weiqiang Wang]{Weiqiang Wang}
	\address{Department of Mathematics, University of Virginia, Charlottesville, VA 22904, USA}
	\email{ww9c@virginia.edu}

	\subjclass[2020]{Primary 17B37, 18G80, 
		16E60.}  
	\keywords{Quantum symmetric pairs, $q$-Onsager algebra, Hall algebras, coherent sheaves}
	
	\begin{abstract}
		The $\imath$Hall algebra of the projective line is by definition the twisted semi-derived Ringel-Hall algebra of the category of $1$-periodic complexes of coherent sheaves on the projective line. This $\imath$Hall algebra is shown to realize the universal $q$-Onsager algebra (i.e., $\imath$quantum group of split affine $A_1$ type) in its Drinfeld type presentation. The $\imath$Hall algebra of the Kronecker quiver was known earlier to realize the same algebra in its Serre type presentation. We then establish a derived equivalence which induces an isomorphism of these two $\imath$Hall algebras, explaining the isomorphism of the $q$-Onsager algebra under the two presentations.
	\end{abstract}
	
	\maketitle
	\setcounter{tocdepth}{1}
	\tableofcontents

	\section{Introduction}

	\subsection{}
	
	Bridgeland \cite{Br13} has realized a whole quantum group via the Hall algebra of $2$-periodic complexes \cite{Br13}, building on the classic construction of Ringel-Hall algebra of a quiver which realizes half a quantum group \cite{Rin90, Lus90, Gr95}.
	
	Recently, two of the authors \cite{LW20, LW22} have developed $\imath$Hall algebras of $\imath$quivers to realize the (universal) quasi-split $\imath$quantum groups of Kac-Moody type. A universal $\imath$quantum group admits a Serre type presentation and contains various central generators, which replace the parameters in an $\imath$quantum group arising from quantum symmetric pairs \`a la G.~Letzter \cite{Let02, Ko14}. The $\imath$Hall algebras are constructed in the framework of semi-derived Ringel-Hall algebras of 1-Gorenstein algebras (\cite[Appendix A]{LW22}, \cite{LW20}), which were generalizations of earlier constructions \cite{Br13, Gor13, LP21}. In particular, Bridgeland's Hall algebras realization of quantum groups can be reformulated as $\imath$Hall algebras of $\imath$quivers of diagonal type.
	
	There has been a current realization of the affine quantum groups formulated by Drinfeld \cite{Dr88, Be94, Da15}, which plays a crucial role on (algebraic and geometric) representation theory. Hall algebra of the projective line was studied in a visionary paper by Kapranov \cite{Ka97} and then extended by Baumann-Kassel \cite{BKa01} to realize the current half of quantum affine $\sll_2$. The Hall algebra of a weighted projective line was developed in \cite{Sch04} to realize half an affine quantum group of ADE type, which were then upgraded to the whole quantum group via Drinfeld double techniques  \cite{DJX12, BS13}.
	
	According to the $\imath$program philosophy \cite{BW18}, $\imath$quantum groups are viewed as a vast generalization of quantum groups, and various (algebraic, geometric, categorical) constructions on quantum groups should be generalizable to $\imath$quantum groups. Earlier notable examples of such generalizations include $q$-Schur duality, (quasi-) R-matrix, canonical basis, Hall algebras, and quiver varieties.
	
	As a most recent development in the $\imath$program, a Drinfeld type (or current) realization of the (universal) $\imath$quantum group of split affine ADE type has been obtained by two of the authors \cite{LW21b}. The (universal) $\imath$quantum group of split affine $A_1$ type is also known as the (universal) $q$-Onsager algebra $\tUi$. The current presentation in the rank one case was motivated by the construction of root vectors in \cite{BK20}, where one finds more references on the $q$-Onsager algebra.

	\subsection{}
	
	The goal of this paper is to realize the universal $q$-Onsager algebra in its current presentation $\tUiD$  via the $\imath$Hall algebra of the projective line over a finite field $\bfk = \bbF_q$, denoted by  $\tMHX$. (By $\imath$Hall algebra of the projective line, we mean  the twisted semi-derived Ringel-Hall algebra of the category of $1$-periodic complexes of coherent sheaves on the projective line. Both this category and the category of modules of a $\imath$quiver algebra are weakly 1-Gorenstein exact categories, and so the general machinery of semi-derived Ringel-Hall algebra in \cite[Appendix A]{LW22} applies.) 
	We further show that the isomorphism of the universal $q$-Onsager algebra in two (Serre vs Drinfeld) presentations is induced from a derived equivalence of the categories underlying the two (quiver vs $\PL$) $\imath$Hall algebra realizations.

	\subsection{}
	
	In its current presentation \cite{LW21b}, the universal $q$-Onsager algebra $\tUiD$ is generated by $B_{1,r}, H_m$, for $r\in \Z, m\ge 1$, and two central elements $\K, C$, subject to relations \eqref{iDR1}--\eqref{iDR3}. The generators $H_m$ can be replaced by another set of generators $\Theta_m$, for $m\ge 1$, and the relations \eqref{iDR1}--\eqref{iDR2} can be replaced by \eqref{eq:hh1}--\eqref{eq:hB1}.
	
	The following is the first main result of this paper. Let $\sqq =\sqrt{q}$.
	
	\begin{customthm}{\bf A} [Theorems~\ref{thm:morphi}, Proposition~\ref{prop:HaH}]
		There exists a $\Q(\sqq)$-algebra homomorphism
		$\Omega: \tUiD_{|_{v=\sqq}} \longrightarrow \tMHX$
		which sends, for all $r\in \Z$ and $m \ge 1$,
		\begin{align*}
			\K_1\mapsto [K_\co],  \quad
			C\mapsto [K_\de],  \quad
			B_{1,r} \mapsto -\frac{1}{q-1}[\co(r)],  \quad
			\Theta_{m}\mapsto  \haT_m, \quad
			H_m \mapsto \haH_m.
		\end{align*}
	\end{customthm}
	The imaginary root vectors $\Theta_m$ (and $\haH_m$) are realized via $\haT_m$ in terms of torsion sheaves on $\PL$; see \eqref{def:Theta} and \eqref{formula for Hm}, while the real root vectors $B_{1,r}$ are realized via torsion-free sheaves $\co(r)$.
	
	To show that $\Omega: \tUiD_{|_{v=\sqq}} \rightarrow \tMHX$ is a homomorphism, we must verify three main relations for the $q$-Onsager algebra, \eqref{iDR3}, \eqref{eq:hh1}, and \eqref{eq:hB1}.
	
	The counterpart in $\tMHX$ of the relation \eqref{eq:hh1} asserts the commutativity among $\haT_m$, where $\haT_m$ are defined in terms of torsion sheaves. It is well known that the category of torsion sheaves supported on a fixed closed point $x \in \PL$ is equivalent to the category of finite-dimensional nilpotent representations of the Jordan quiver (i.e., the quiver with one vertex and one loop).
	As we show the $\imath$Hall algebra of the Jordan quiver is commutative, the commutativity of $\haT_m$ follows.
	
	Recall the Hall algebra of the Jordan quiver is historically the original example of Hall's construction; it is isomorphic to the ring of symmetric functions and leads to a basis given by Hall-Littlewood functions. The $\imath$Hall algebra of the Jordan quiver admits rich combinatorial properties as well, which will be studied in depth in a separate publication
	\cite{LRW21}.

	Various well-known constructions in the Hall algebra of the Jordan quiver are essentially used in computations of the counterpart of $\haT_m$ in the Hall algebra of $\PL$ in \cite[Example~ 4.12]{Sch12}. (This did not appear in \cite{BKa01}.) As we do not have the results on $\imath$Hall algebra of the Jordan quiver available to us, our verification of the relation \eqref{eq:hB1} takes a more direct approach which requires some serious computations. In addition, the relations \eqref{eq:hB1} and \eqref{iDR3} contain terms involving $\K_\de$ which do not arise in the computations \cite{Ka97, BKa01, Sch12} of similar identities in Hall algebra of the projective line; some new homological computations are needed to determine these $\K_\de$ terms.
	
	Finally, we obtain an $\imath$Hall algebra realization $\haH_m$ of the generators $H_m$ in $\tUi$; cf. Proposition~\ref{prop:HaH}. In contrast to its counterpart in \cite{BKa01} (also cf. \cite{Sch12}), the $\haH_m$ has a subtle summand involving $K_\de$. While we have a self-contained long proof for the formula of $\haH_m$, a similar proof is still needed to produce a similar result for $\imath$Hall algebra of the Jordan quiver \cite{LRW21}. On the other hand, appealing to this result {\em loc. cit.} allows us to shorten the proof for $\haH_m$ considerably, which is the approach we follow here.

	\subsection{}
	
	Beilinson \cite{Bei79} constructed a tilting object which induces a derived equivalence $\cd^b(\coh(\PL))\stackrel{\simeq}{\rightarrow} \cd^b(\rep_\bfk( \QK))$, where $\rep_\bfk( \QK)$ is the category of finite-dimensional representations of the Kronecker quiver $\QK$; see \eqref{Bei}. We establish a similar derived equivalence in the setting of 1-periodic complexes (which is again induced by a tilting object), cf. Proposition~\ref{prop:tilt}. 
	
	As a special case of the main result in \cite{LW20}, there is a realization of the universal $q$-Onsager algebra $\tUi$ (in its Serre type presentation) via the $\imath$Hall algebra of the Kronecker quiver $\tMHL$, that is, we have an injective homomorphism $\widetilde\psi: \tUi_{ |v={\sqq}} \rightarrow \tMHL$. The aforementioned derived equivalence induces an isomorphism of $\imath$Hall algebras $\tMHL \cong \tMHX$, providing a categorification of the algebra isomorphism $\tUi \cong \tUiD$ of the $q$-Onsager algebra in two (Serre and Drinfeld type) presentations. We summarize our second main result as follows.
	
	\begin{customthm}{\bf B} [Proposition~\ref{prop:F}, Theorem~\ref{main thm2}]
		We have the following commutative diagram 
		\[
		\xymatrix{\tUi_{ |v={\sqq}} \ar[d]^{\widetilde{\psi}} \ar[r]^{\cong}   & {}^{\text{Dr}}\tUi_{ |v=\sqq} \ar[d]^{\Omega}
			\\
			\tMHL \ar[r]^{\cong} & \tMHX  }\]
		In particular, the homomorphism $\Omega$ is injective.
	\end{customthm}
	
	This is similar to the interpretation by Burban-Schiffmann \cite{BS12} that the Drinfeld-Beck isomorphism for quantum affine $\sll_2$ 
	can be explained via Beilinson's derived equivalence when combined with Cramer's result \cite{Cr10}. A possible relevance of derived equivalence to the two different presentations of quantum affine $\sll_2$ was remarked by Kapranov \cite{Ka97} (also cf. \cite{BKa01}).

	\subsection{}
	This work opens up further research directions.
	It will be natural to develop connections between $\imath$Hall algebras of weighted projective lines and the $\imath$quantum groups of split affine ADE type in Drinfeld type current presentations, and this will be carried out in \cite{LR21}. It will also be interesting to study the $\imath$Hall algebras of higher genus curves, in particular, of elliptic curves.

	\subsection{}
	
	The paper is organized as follows.
	In Section~\ref{sec:HallPL}, we review the category of coherent sheaves on the projective line and define the corresponding $\imath$Hall algebra. The new Drinfeld type presentation of the universal $q$-Onsager algebra is summarized in Section~\ref{sec:Onsager}.
	
	In Section~\ref{sec:HallOn}, we show that $\Omega: \tUiD_{|_{v=\sqq}} \rightarrow \tMHX$ is a homomorphism in Theorem~{\bf A} by verifying the three defining relations of $\tUiD$ in $\tMHX$.
	A derived equivalence leading to the isomorphism of $\imath$Hall algebras is established and then Theorem~{\bf B} is proved in Section~\ref{sec:derived}.
	In Section~\ref{sec:T}, we provide an $\imath$Hall algebra realization $\haH_m$ of the generators $H_m$.

	\vspace{2mm}
	\noindent {\bf Acknowledgement.}
	ML thanks University of Virginia, Shanghai Key Laboratory of Pure Mathematics and Mathematical Practice, East China Normal University for hospitality and support. ML is partially supported by the Science and Technology Commission of Shanghai Municipality (grant No. 18dz2271000), and the National Natural Science Foundation of China (grant No. 12171333).
	SR is partially supported by the National Natural Science Foundation of China (grant No. 11801473) and the Fundamental Research Funds for Central Universities of China (grant No. 20720220043).
	WW is partially supported by the NSF grant DMS-1702254 and DMS-2001351.

	\section{$\imath$Hall algebra of the projective line}
	\label{sec:HallPL}

	In this section, we review some basic facts on the category $\coh(\PL)$ of coherent sheaves of the projective line over a finite field $\Bbbk$ (also cf. \cite{BKa01}). We then apply the machinery of semi-derived Ringel-Hall algebra \cite{LP21, LW22, LW20} to formulate the $\imath$Hall algebra of $\coh(\PL)$ (and also the $\imath$Hall algebras of the Jordan quiver and the Kronecker quiver).

	\subsection{Coherent sheaves on $\PL$}
	
	Let $\bfk=\bbF_{q}$ be a finite field of $q$ elements. For a (not necessarily acyclic) quiver $Q$, we denote
	
	$\triangleright$ $\rep_\bfk(Q)$ -- category of finite-dimensional representations of $Q$ over $\bfk$,
	
	$\triangleright$ $\rep^{\rm nil}_\bfk(Q)$ -- subcategory of $\rep_\bfk(Q)$ consisting of nilpotent representations of $Q$.
	
	\noindent Note that $\rep_{\bfk}(Q)= \rep^{\rm nil}_\bfk(Q)$ if $Q$ is acyclic.
	
	For a quiver with relations $(Q,I)$, let $\Lambda=\bfk Q/(I)$ be its (not necessarily finite-dimensional) quiver algebra. We define $\rep_\bfk(Q,I)$ and $\rep_\bfk^{\rm nil}(Q,I)$ similarly. We also denote  $\rep(\Lambda)\stackrel{\rm def}{=}\rep_\bfk(Q,I)$ and $\rep^{\rm nil}(\Lambda)\stackrel{\rm def}{=}\rep^{\rm nil}_\bfk(Q,I)$. Note that $\rep(\Lambda)=\rep^{\rm nil}(\Lambda)$ if $\Lambda$ is finite-dimensional.

	%
	The coordinate ring of the projective line $\PL$ over $\bfk$ is the $\bbZ$-graded ring $\bS=\bfk[X_0,X_1]$ with $\deg(X_0)=\deg(X_1)=1$. A closed point $x$ of $\PL$ is given by a prime homogeneous ideal of $\bS$ generated by an irreducible polynomial in $\bS$. The degree of $x$, denoted by $\deg(x)$ or $d_x$, is defined to be the degree of the defining irreducible polynomial associated to $x$.
	We denote
	
	$\triangleright$
	$\mod^{\bbZ}(\bS)$ -- category of finitely generated $\bbZ$-graded $\bS$-modules,
	
	$\triangleright$
	$\mod_0^{\bbZ}(\bS)$-- the full subcategory of $\mod^{\bbZ}(\bS)$ of finite-dimensional graded $\bS$-modules,
	
	$\triangleright$
	$\coh(\PL)$ -- category of cohorent sheaves on $\PL$,
	
	$\triangleright$
	$\vec(\PL)$ -- category of locally free sheaves on $\PL$,
	
	$\triangleright$
	$\tor(\PL)$ -- category of torsion sheaves on $\PL$,
	
	$\triangleright$
	$\tor_x(\PL)$ -- category of torsion sheaves on $\PL$ supported on a closed point $x \in \PL$.
	
	We can associate a coherent sheaf $\widetilde{M}$ on $\PL$ to any $M \in \mod^{\bbZ}(\bS)$. This gives rise to a category equivalence    (which goes back to Serre):
	$\mod^{\mathbb{Z}}(\bS) \big / \mbox{mod}_{0}^{\mathbb{Z}}(\bS)
	\cong \coh(\PL).$
	The category $\coh(\PL)$ is a finitary hereditary abelian Krull-Schmidt category with Serre duality of the form
	\[
	\Ext^1(X, Y)\cong{\rm D}\Hom(Y, \tau(X)),
	\]
	where $D=\Hom_\bfk(-,\bfk)$ and $\tau$ is given by the grading shift with $(-2)$. This implies the existence of almost split sequences for $\coh(\PL)$ with the Auslander-Reiten translation $\tau$.
	
	The pair $(\tor(\PL), \vec(\PL))$ forms a split torsion pair in $\coh(\PL)$, namely, any coherent sheaf can be decomposed as a direct sum of a torsion sheaf and a vector bundle, and there are no nonzero homomorphisms from $\tor(\PL)$ to $\vec(\PL)$.
	
	Any indecomposable vector bundle on $\PL$ is a line bundle; more precisely \cite{Gro57}, it is of the form $\co(n)=\widetilde{\bS[n]}$, for $n\in\bbZ$, where $\bS[n]$ is the $n$-th shift of the trivial module $\bS$, i.e., $\bS[n]_i=\bS_{n+i}$. In particular, if $n=0$, then $\co:=\co(0)$ is the structure sheaf of $\PL$.
	The homomorphism between two line bundles are given by
	\begin{align}   \label{eq:OmOn}
		\Hom(\co(m),\co(n))\cong \bS_{n-m},
	\end{align}
	which then has dimension $n-m+1$ if $n\geq m$ and $0$ otherwise.
	
	The category of torsion sheaves splits into a direct sum of blocks
	\[
	\tor(\PL)=\bigoplus\limits_{x\in\PL}\tor_x(\PL).
	\]
	The category $\tor_x(\PL)$ is equivalent to the category of finite-dimensional modules over the discrete valuation ring (stalk) $\co_x$ at $x$. Hence, it is equivalent to the category $\rep^{\rm{nil}}_{\bfk_x} (\QJ)$ of finite-dimensional nilpotent representations of the Jordan quiver $\QJ$ over the residue field $\bfk_x$ of $\co_x$, where $\bfk_x$ is a finite field extension of $\bfk$ with $[\bfk_x:\bfk]=d_x$. 
	Any indecomposable object in $\tor_x(\PL)$ is of the form $S_x^{(n)}$ of length $n\geq 1$, where $S_x =S_x^{(1)}$ is simple.
	
	The $S_x^{(n)}$ is uniserial in the sense that all the subobjects of $S_x^{(n)}$ form an increasing chain
	\[
	\xymatrix{0\ar[r]& S_{x}\ar[r]^{u}&S_{x}^{(2)}\ar[r]^{u}& \cdots\ar[r]^{u} &S_{x}^{(n-1)}\ar[r]^{u}&S_{x}^{(n)},}
	\]
	and all the quotient objects of $S_i^{(n)}$ form a decreasing chain
	\[
	\xymatrix{S_{x}^{(n)}\ar[r]^{p}& S_{x}^{(n-1)}\ar[r]^{p}& \cdots\ar[r]^{p} &S_{x}^{(2)}\ar[r]^{p}&S_{x} \ar[r]^{p}& 0.}
	\]
	Here $u: S_{x}^{(i-1)}\rightarrow S_{x}^{(i)}$ denotes the irreducible injection map and $p: S_{x}^{(i)}\rightarrow S_{x}^{(i-1)}$ denotes the irreducible surjection map, respectively. 
	
	Assume that a closed point $x$ is determined by a homogeneous polynomial $f(X_1, X_2)$ of degree $d$. Then $S_x^{(n)}$ is determined by the exact sequence
	\[
	0 \longrightarrow  \co(-nd)  \xrightarrow{f(X_1,X_2)^n}
	\co \longrightarrow S_x^{(n)} \longrightarrow 0.
	\]
	In particular, we have
	\begin{align}   \label{hom:SS}
		\Hom(\co, S_x^{(n)})\cong\bfk_x^n,
		\qquad
		\Hom(S_x^{(m)}, S_x^{(n)})\cong { D}\Ext^1(S_x^{(n)}, S_x^{(m)})\cong\bfk_x^{\min\{m,n\}}.
	\end{align}
	For $m\leq n$, the extension in $\Ext^1(S_x^{(n)}, S_x^{(m)})$ is of the following form, for some $0\leq a\leq m$:
	\begin{align}  \label{ext:SS}
		\xymatrix{0\ar[r]& S_x^{(m)}\ar[rr]^-{(p^{a}, u^{n-m+a})^t}&&S_x^{(m-a)}\oplus S_x^{(n+a)}\ar[rr]^-{(u^{n-m+a}, -p^{a})}&&S_x^{(n)}\ar[r]&0.}
	\end{align}
	
	We denote by $\mathbb N (\PL)$ the set of all functions ${\bf n}: \PL \rightarrow \N$ such that ${\bf n}_x \neq 0$ for only finitely many $x\in \PL$. We sometimes write $\bn \in \mathbb N (\PL)$ as ${\bf n} =({\bf n}_x)_{x\in \PL}$ or  ${\bf n} =({\bf n}_x)_x$.  We define a partial order $\leq$ on $\mathbb N (\PL)$:
	\begin{align}  \label{eq:mn}
		\bn \leq \bm \text{ if and only if } \bn_x \le \bm_x \text{ for all }x\in \PL.
	\end{align}
	For $\bn \in \mathbb N (\PL)$, we denote the torsion sheaf
	\begin{align}  \label{eq:Sbn}
		S_\bn =\bigoplus\limits_{x\in\PL}S_{x}^{(\bn_x)},
	\end{align}
	whose degree is given by
	\begin{align*}
		|| {\bf n} || :=\sum_{x\in \bbP^1_{\mathbf{k}}} d_x {\bf n}_x.
	\end{align*}
	
	For two distinct closed points $x,y\in \PL$, the categories $\tor_x(\PL)$ and $\tor_y(\PL)$ are orthogonal in the sense that
	there are no nontrivial homomorphisms and extensions between them, that is, $\Hom(S_x^{(m)}, S_y^{(n)})=0=\Ext^1(S_x^{(m)}, S_y^{(n)})$, for $m,n\geq 1$.
	
	Let $T=\co\oplus \co(1)$. Then \cite{Bei79} $T$ is a tilting sheaf in $\coh(\PL)$, whose endomorphism ring is the quiver algebra $\bfk \QK$ of the Kronecker quiver
	\[
	\QK:\xymatrix{ 0\ar@<0.5ex>[r] \ar@<-0.5ex>[r]&1 }
	\]
	It follows that there is an equivalence between the bounded derived categories
	\begin{align*}
		\cd^b(\coh(\PL))\cong \cd^b(\rep_\bfk(\QK)).
	\end{align*}
	Denote by $\widehat{\mathcal F}$ the image in its Grothendieck group $K_0(\ca)$ of the isoclass of $\mathcal F$ in an abelian category $\ca$. Then, the isomorphism classes $\widehat{\co}$ and $\widehat{\co(1)}$ form a basis of $K_0(\PL):= K_0(\coh(\PL))$. Denote by
	\begin{align}
		\delta:=\widehat{\co(1)}-\widehat{\co},
		\label{eq:delta}
	\end{align}
	then $\{\widehat{\co}, \delta\}$ is also a basis. We define two $\bbZ$-linear functions degree and rank on $K_0(\PL)$ such that
	\[
	\deg(\widehat{\co})=0,
	\quad \deg(\delta)=1,
	\quad \rk(\widehat{\co})=1,
	\quad \rk(\delta)=0.
	\]
	Then $\deg(\widehat{\co(d)})=d$ and $\deg(\widehat{S_x^{(n)}})=n{d_x}$ for $d\in \Z$ and $n>0$; and for any coherent sheaf $F$, the integer $\rk(\widehat{F})$ coincides with the geometric rank of $F$.
	Moreover, the assignment
	\[
	K_0(\PL)\longrightarrow \bbZ^2, \qquad
	\widehat{F}\mapsto (\rk(\widehat{F}), \deg(\widehat{F})),
	\]
	is an isomorphism of $\bbZ$-modules.
	
	The Euler form on $K_0(\PL)$ is defined and then computed by Riemann-Roch as follows: 
	\begin{align}
		\begin{split}
			\langle \widehat{E},\widehat{F}\rangle
			&=\dim_{\bfk}\Hom(E,F)-\dim_{\bfk}\Ext^{1}(E,F)
			\\
			&=\rk(\widehat{E})\rk(\widehat{F})-
			\rk(\widehat{E})\deg(\widehat{F})+\rk(\widehat{F})\deg(\widehat{E}).
		\end{split}
		\label{eq:Euler}
	\end{align}

	\subsection{Hall algebras}
	\label{subsec:HA}
	Let $\ce$ be an essentially small exact category in the sense of Quillen, linear over a finite field $\bfk=\F_q$. Assume that $\ce$ is Hom-finite and $\Ext^1$-finite. 
	Given objects $M,N,L\in\ce$, let $\Ext^1(M,N)_L\subseteq \Ext^1(M,N)$ be the subset parameterizing extensions whose middle term is isomorphic to $L$. The {\em Hall algebra} (or {\em Ringel-Hall algebra}) $\ch(\ce)$ is defined to be the $\Q$-vector space with the isoclasses $[M]$ of objects $M$ of $\ce$ as a basis and multiplication given by (cf., e.g., \cite{Br13})
	\[
	[M]\diamond [N]=\sum_{[L]\in \Iso(\ce)}\frac{|\Ext^1(M,N)_L|}{|\Hom(M,N)|}[L].
	\]
	
	\begin{remark}
		Ringel's version of Hall algebra \cite{Rin90} uses a different multiplication formula, but these two versions of Hall algebra are isomorphic by rescaling the generators by the orders of automorphisms.
	\end{remark}
	
	Given three objects $X,Y,Z$, the Hall number is defined to be
	$$F_{XY}^Z:= |\{L\subseteq Z\mid L \cong Y\text{ and }Z/L\cong X\}|.$$
	Denote by $\aut(X)$ the automorphism group of $X$. The Riedtman-Peng formula reads
	\begin{align}
		\label{eq:RP}
		F_{XY}^Z= \frac{|\Ext^1(X,Y)_Z|}{|\Hom(X,Y)|} \cdot \frac{|\aut(Z)|}{|\aut(X)| |\aut(Y)|}.
	\end{align}

	\subsection{Category of $1$-periodic complexes}
	\label{subsec:periodic}
	
	Let $\ca$ be a hereditary abelian category  which is essentially small with finite-dimensional homomorphism and extension spaces. 
	
	A $1$-periodic complex $X^\bullet$ in $\ca$ is a pair $(X,d)$ with $X\in\ca$ and a differential $d:X\rightarrow X$. A morphism $(X,d) \rightarrow (Y,e)$ is given by a morphism $f:X\rightarrow Y$ in $\ca$ satisfying $f\circ d=e\circ f$. Let $\calc_1(\ca)$ be the category of all $1$-periodic complexes in $\ca$. Then $\calc_1(\ca)$ is an abelian category.
	A $1$-periodic complex $X^\bullet=(X,d)$ is called acyclic if $\ker d=\Im d$. We denote by $\calc_{1,ac}(\ca)$ the full subcategory of $\calc_1(\ca)$ consisting of acyclic complexes.
	Denote by $H(X^\bullet)\in\ca$ the cohomology group of $X^\bullet$, i.e., $H(X^\bullet)=\ker d/\Im d$, where $d$ is the differential of $X^\bullet$.
	
	The category $\calc_1(\ca)$ is Frobenius with respect to the degreewise split exact structure.
	The $1$-periodic homotopy category $\ck_1(\ca)$ is obtained as the stabilization
	of $\calc_1(\ca)$, and the $1$-periodic derived category $\cd_1(\ca)$ is the localization of the homotopy category $\ck_1(\ca)$ with respect to quasi-isomorphisms. Both $\ck_1(\ca)$ and $\cd_1(\ca)$ are triangulated categories.
	
	Let $\calc^b(\ca)$ be the category of bounded complexes over $\ca$ and $\cd^b(\ca)$ be the corresponding derived category with the shift functor $\Sigma$.  Then there is a covering functor $\pi:\calc^b(\ca)\longrightarrow \calc_1(\ca)$, inducing a covering functor $\pi:\cd^b(\ca)\longrightarrow \cd_1(\ca)$ which is dense (see, e.g.,
	\cite[Lemma 5.1]{St17}). The orbit category $\cd^b(\ca)/\Sigma$ is a triangulated category \cite{Ke2}, and we have
	\begin{align}
		\label{dereq}
		\cd_1(\ca)\simeq \cd^b(\ca)/\Sigma.
	\end{align}
	
	For any $X\in\ca$, denote the stalk complex by
	\[
	C_X =(X,0)
	\]
	(or just by $X$ when there is no confusion), and denote by $K_X$ the following acylic complex:
	\[
	K_X:=(X\oplus X, d),
	\qquad \text{ where }
	d=\left(\begin{array}{cc} 0&\Id \\ 0&0\end{array}\right).
	\]
	
	\begin{lemma}
		[also cf. \cite{LinP}]
		\label{lem:pd acyclic}
		For any acyclic complex $K^\bullet$ and $p \ge 2$, we have
		\begin{align}
			\label{Ext2vanish}
			\Ext^p_{\calc_1(\ca)}(K^\bullet,-)=0=\Ext^p_{\calc_1(\ca)}(-,K^\bullet).
		\end{align}
	\end{lemma}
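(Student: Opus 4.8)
The plan is as follows. First I would reduce to one vanishing. Since $\calc_1(\ca)^{\mathrm{op}}\cong\calc_1(\ca^{\mathrm{op}})$, since $\ca^{\mathrm{op}}$ is again hereditary abelian, essentially small, with finite-dimensional Hom- and Ext-spaces, since acyclicity of a complex is self-dual, and since Yoneda Ext is unchanged under passing to opposites, it suffices to prove $\Ext^p_{\calc_1(\ca)}(K^\bullet,-)=0$ for $p\ge2$, i.e.\ that every acyclic complex has projective dimension at most one.

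Two functors will carry the argument. The first is the exact forgetful functor $F\colon\calc_1(\ca)\to\ca$, $(X,d)\mapsto X$; one checks from the definitions that $K_{(-)}\colon\ca\to\calc_1(\ca)$ is exact and is simultaneously a left and a right adjoint of $F$, so that in particular $K_P$ is projective in $\calc_1(\ca)$ whenever $P$ is projective in $\ca$. The second is the right exact functor $G\colon\calc_1(\ca)\to\ca$, $(X,d)\mapsto\coker(d)$; on the full subcategory of acyclic complexes, $G$ is naturally isomorphic to $(X,d)\mapsto\ker(d)$ (both identified with $Z:=\ker d=\Im d$ through the map induced by $d$), and consequently $G$ carries short exact sequences of acyclic complexes to short exact sequences.

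Assume first that $\ca$ has enough projectives, and let $K^\bullet=(X,d)$ be acyclic. Put $Z:=G(K^\bullet)=\coker(d)$ and choose a projective presentation $0\to Z'\to P_0\xrightarrow{\pi}Z\to0$ in $\ca$; since $\ca$ is hereditary, $Z'$ is projective. Lifting $\pi$ along the epimorphism $X\twoheadrightarrow X/\Im d=Z$ and applying the adjunction gives a morphism $\Phi\colon K_{P_0}\to K^\bullet$, which one checks to be an epimorphism (the identity $\widetilde\pi(P_0)+\Im d=X$ forces $d\,\widetilde\pi(P_0)=\Im d$, whence $\Im F(\Phi)=X$). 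Set $Y^\bullet:=\ker\Phi$, so $0\to Y^\bullet\to K_{P_0}\xrightarrow{\Phi}K^\bullet\to0$ is exact. The long exact cohomology sequence together with $H(K_{P_0})=0=H(K^\bullet)$ gives $H(Y^\bullet)=0$, so $Y^\bullet$ is acyclic; applying $G$ to this sequence (legitimate as all three terms are acyclic) yields $0\to G(Y^\bullet)\to G(K_{P_0})\to G(K^\bullet)\to0$, i.e.\ $G(Y^\bullet)\cong\ker(\pi)=Z'$. Hence $Y^\bullet$ is acyclic with $\ker(d_{Y^\bullet})\cong G(Y^\bullet)\cong Z'$ projective, so its defining short exact sequence $0\to\ker d_{Y^\bullet}\to FY^\bullet\to\Im d_{Y^\bullet}\to0$, whose cokernel term $\Im d_{Y^\bullet}=\ker d_{Y^\bullet}\cong Z'$ is projective, splits, forcing $Y^\bullet\cong K_{Z'}$. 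Since $P_0$ and $Z'$ are projective, $K_{P_0}$ and $K_{Z'}$ are projective in $\calc_1(\ca)$, so $0\to K_{Z'}\to K_{P_0}\to K^\bullet\to0$ is a length-one projective resolution and $\Ext^{\ge2}_{\calc_1(\ca)}(K^\bullet,-)=0$.

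The hardest point, I expect, is removing the hypothesis that $\ca$ has enough projectives, which fails for $\ca=\coh(\PL)$. For this one embeds $\ca$ into a hereditary abelian category $\widehat\ca$ that has enough projectives and in which $\ca$ sits so that the higher Yoneda Ext-groups of complexes over $\ca$ are computed equally in $\calc_1(\widehat\ca)$, and then runs the argument above there. For the quiver cases this costs nothing, as $\calc_1(\rep_\bfk(Q))$ already has enough projectives (one may take $\widehat\ca=\operatorname{Mod}(\bfk Q)$); for $\ca=\coh(\PL)$ one passes to a suitable large hereditary category, or dualizes using that $\coh(\PL)\subseteq\operatorname{QCoh}(\PL)$ is a Serre subcategory with unchanged Ext-groups. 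I would therefore single out this Ext-comparison/enlargement step, together with the elementary but slightly technical verification that $G$ is exact on acyclic complexes, as the two points needing care; the explicit resolution $0\to K_{Z'}\to K_{P_0}\to K^\bullet\to0$ is then the easy core.
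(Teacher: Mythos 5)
Your core construction is correct, and it is genuinely different from the paper's argument. When $\ca$ has enough projectives, everything checks out: $K_{(-)}$ is indeed both left and right adjoint to the exact forgetful functor, so $K_P$ is projective for $P$ projective; the identity $d\widetilde\pi(P_0)=\Im d$ (from $d^2=0$) makes $\Phi$ surjective; $\coker(d)$ is exact on short exact sequences of acyclic complexes because it is right exact in general and naturally isomorphic there to the left exact functor $\ker(d)$; and the splitting forced by the projectivity of $\Im d_{Y^\bullet}\cong Z'$ gives $Y^\bullet\cong K_{Z'}$, so $0\to K_{Z'}\to K_{P_0}\to K^\bullet\to 0$ is a two-term projective resolution. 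The paper avoids projectives and injectives altogether: it reduces to stalk complexes $C_X$ (every object of $\calc_1(\ca)$ is an extension of two of these), invokes the analogue of the lemma for $m$-periodic acyclic complexes with $m\ge 2$ (the proof of \cite[Proposition 2.3]{LP21}), and transfers it to $m=1$ via the covering functor $\pi_*:\calc_m(\ca)\to\calc_1(\ca)$ with $\operatorname{char}\bfk\nmid m$, using that the adjunction map $K^\bullet\to\pi_*\pi^*(K^\bullet)$ realizes $K^\bullet$ as a direct summand. That route works uniformly for any hereditary $\ca$, which is exactly what is needed here.

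The gap is the step you yourself flag, and it concerns precisely the case this lemma exists for: $\ca=\coh(\PL)$, which has no nonzero projectives and no nonzero injectives. The proposed enlargement does not close it as stated. $\operatorname{QCoh}(\PL)$ has enough injectives but not enough projectives, so embedding there and running the dual argument can only produce $\Ext^{\ge 2}_{\calc_1(\ca)}(-,K^\bullet)=0$; for the other vanishing, your opposite-category reduction would require embedding $\coh(\PL)^{\mathrm{op}}$ into a hereditary abelian category with enough injectives, equivalently $\coh(\PL)$ into one with enough projectives, and no such category is exhibited. In addition, the comparison of higher Yoneda Ext-groups between $\calc_1(\coh(\PL))$ and $\calc_1(\operatorname{QCoh}(\PL))$ is itself a substantive point (in effect the identification of $\calc_1(\coh(X))$ with coherent sheaves on $X\times\operatorname{Spec}\bfk[\varepsilon]/(\varepsilon^2)$ plus the coherent/quasi-coherent Ext comparison on a noetherian scheme), not a formality one can wave at. So as written the proof is complete only for $\ca$ with enough projectives; to finish, either carry out the enlargement carefully on both sides, or fall back on the covering-functor argument, which handles the general hereditary case directly.
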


	\begin{proof}
		It is enough to prove $\Ext^p_{\calc_1(\ca)}(K^\bullet,C_X)=0=\Ext^p_{\calc_1(\ca)}(C_X,K^\bullet)$ for any $X\in\ca$, by noting that $\calc_1(\ca)$ coincides with the extension closure of $C_X$ ($X\in\ca$).
		
		Denote by $\calc_m(\ca)$ the category of $m$-periodic complexes over $\ca$ for any $m\geq1$.
		By the same proof of \cite[Proposition 2.3]{LP21}, one can obtain that the analogous result holds for $m$-periodic acyclic complexes ($m\geq2$).
		
		Fix $m$ below such that ${\rm char}\bfk\nmid m$. There is a covering functor
		\begin{align*}
			\pi_*: \calc_{m}(\ca)\longrightarrow \calc_1(\ca)
		\end{align*}
		which admits a left (and also right) adjoint functor $\pi^*: \calc_1(\ca)\longrightarrow \calc_m(\ca)$ preserving acyclic complexes. One can prove that \eqref{Ext2vanish} holds for any acyclic complex $K^\bullet\in\Im (\pi_*)$.
		Since ${\rm char}\bfk\nmid m$, the adjunction $ K^\bullet\longrightarrow\pi_*\pi^*(K^\bullet)$ induces that $K^\bullet$ is a direct summand of $\pi_*\pi^*(K^\bullet)$. So \eqref{Ext2vanish} holds for any acyclic complex $K^\bullet$.
	\end{proof}
	
	\begin{lemma}
		\label{lem: iso in singularity}
		For any $X^\bullet,Y^\bullet\in\calc_1(\ca)$, we have $H(X^\bullet)=H(Y^\bullet)$ if and only if
		there exist two short exact sequences
		\begin{align*}
			0\longrightarrow U_1^\bullet\longrightarrow Z^\bullet \longrightarrow X^\bullet\longrightarrow0,
			\qquad 0\longrightarrow U_2^\bullet\longrightarrow Z^\bullet\longrightarrow Y^\bullet\longrightarrow0
		\end{align*}
		with $U^\bullet_1,U^\bullet_2\in \calc_{1,ac}(\ca)$. 
	\end{lemma}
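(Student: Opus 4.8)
\emph{Proof strategy.} I would prove the two implications separately. For the ``if'' direction, a short exact sequence $0\to A^\bullet\to B^\bullet\to C^\bullet\to 0$ in the abelian category $\calc_1(\ca)$ induces the periodic long exact sequence $\cdots\to H(A^\bullet)\to H(B^\bullet)\to H(C^\bullet)\to H(A^\bullet)\to\cdots$; since an acyclic complex $K^\bullet$ has $H(K^\bullet)=\ker d/\Im d=0$, the two given sequences force $H(X^\bullet)\cong H(Z^\bullet)\cong H(Y^\bullet)$. (The same exact sequence also shows $\calc_{1,ac}(\ca)$ is closed under extensions, which is used below.)

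For the ``only if'' direction, write $X^\bullet\approx Y^\bullet$ when a common middle term $Z^\bullet$ admitting the two short exact sequences of the statement exists. This relation is reflexive and symmetric, and it is transitive: given $X^\bullet\approx Y^\bullet$ via $Z^\bullet$ and $Y^\bullet\approx T^\bullet$ via $W^\bullet$, the pullback $P^\bullet$ of $Z^\bullet\twoheadrightarrow Y^\bullet\twoheadleftarrow W^\bullet$ carries epimorphisms onto $Z^\bullet$ and $W^\bullet$, hence onto $X^\bullet$ and $T^\bullet$, with acyclic kernels -- the kernels onto $Z^\bullet,W^\bullet$ being the acyclic kernels of $W^\bullet\to Y^\bullet$ and $Z^\bullet\to Y^\bullet$, and the kernels onto $X^\bullet,T^\bullet$ being extensions of one acyclic complex by another. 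Moreover $\approx$ is invariant under isomorphism and under the shift functor $\Sigma\colon(X,d)\mapsto(X,-d)$ on $\calc_1(\ca)$ (which preserves acyclic complexes), and any short exact sequence $0\to K^\bullet\to E^\bullet\to F^\bullet\to 0$ with $K^\bullet$ acyclic already gives $E^\bullet\approx F^\bullet$ (take $Z^\bullet=E^\bullet$).

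The crux is that every quasi-isomorphism $f\colon A^\bullet\to B^\bullet$ in $\calc_1(\ca)$ satisfies $A^\bullet\approx B^\bullet$. Since $f$ is a quasi-isomorphism, $\mathrm{Cone}(f)$ is acyclic, and the standard short exact sequence exhibits $B^\bullet$ as a subcomplex of $\mathrm{Cone}(f)$ with quotient $\Sigma A^\bullet$; on the other hand, with $B\in\ca$ the underlying object of $B^\bullet=(B,d)$, the acyclic complex $K_B$ contains $B^\bullet$ via $x\mapsto(x,dx)$, with quotient $\Sigma B^\bullet$. Forming the pushout $Q^\bullet$ of the two inclusions $B^\bullet\hookrightarrow\mathrm{Cone}(f)$ and $B^\bullet\hookrightarrow K_B$ yields short exact sequences $0\to\mathrm{Cone}(f)\to Q^\bullet\to\Sigma B^\bullet\to 0$ and $0\to K_B\to Q^\bullet\to\Sigma A^\bullet\to 0$, both with acyclic subobject, so $\Sigma A^\bullet\approx Q^\bullet\approx\Sigma B^\bullet$ and, applying $\Sigma^{-1}$, $A^\bullet\approx B^\bullet$. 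Granting this, suppose $H(X^\bullet)=H(Y^\bullet)$. Because $\ca$ is hereditary, the covering functor $\cd^b(\ca)\to\cd_1(\ca)$ is dense and every object of $\cd^b(\ca)$ is the direct sum of its cohomologies, so every object of $\cd_1(\ca)$ is isomorphic to a stalk complex; thus $X^\bullet$ and $Y^\bullet$ are each isomorphic in $\cd_1(\ca)$ to the stalk complex on their common cohomology, hence to each other. By the calculus of fractions this isomorphism is represented by a roof $X^\bullet\xleftarrow{s}W^\bullet\xrightarrow{t}Y^\bullet$ of quasi-isomorphisms (represented by honest chain maps), so the crux gives $X^\bullet\approx W^\bullet\approx Y^\bullet$.

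The main obstacle is the crux and the passage through $\cd_1(\ca)$: one must run the cone--pushout construction to turn quasi-isomorphisms into $\approx$-equivalences, and must use the hereditary hypothesis together with the density of $\cd^b(\ca)\to\cd_1(\ca)$ to see that a $1$-periodic complex is determined up to isomorphism in $\cd_1(\ca)$ by its cohomology. The ``if'' direction and the transitivity bookkeeping are routine diagram chases.
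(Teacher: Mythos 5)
Your proof is correct and follows the same route as the paper: reduce to the ``only if'' direction, use the hereditary hypothesis together with the orbit-category description \eqref{dereq} to get $X^\bullet\cong Y^\bullet$ in $\cd_1(\ca)$, and then convert that isomorphism into the two short exact sequences. The paper compresses this last conversion into the single sentence ``the desired exact sequences follow from the definition of $\cd_1(\ca)$''; your roof--cone--pushout argument (and the transitivity bookkeeping for $\approx$) is exactly the standard way to justify it.
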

	
	\begin{proof}
		We only need to prove the ``only if'' part.
		If $H(X^\bullet)\cong H(Y^\bullet)$, then $X^\bullet\cong Y^\bullet$ in $\cd_1(\ca)$ by \eqref{dereq}. The desired exact sequences follow from the definition of $\cd_1(\ca)$.
	\end{proof}
	
	Let $\cb$ be an abelian category. For any $B\in\cb$, its Ext-projective dimension $\pd B$ is defined to be the smallest number $i\in\N$ such that
	$\Ext^{i+1}_{\cb}(B,-)=0$; dually one can define its Ext-injective dimension $\ind B$.
	
	\begin{corollary}
		\label{cor:acyclic}
		For any $K^\bullet\in\calc_1(\ca)$ the following are equivalent: (i) $\pd K^\bullet<\infty$;
		(ii)~ $\ind K^\bullet<\infty$;
		(iii) $\pd K^\bullet\leq1$;
		(iv) $\ind K^\bullet\leq1$;
		(v) $K^\bullet$ is acyclic.
	\end{corollary}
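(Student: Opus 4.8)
The plan is to prove the chain of implications
$(\mathrm{v})\Rightarrow(\mathrm{iii})\Rightarrow(\mathrm{i})\Rightarrow(\mathrm{v})$ and, dually, $(\mathrm{v})\Rightarrow(\mathrm{iv})\Rightarrow(\mathrm{ii})\Rightarrow(\mathrm{v})$, so that all five conditions become equivalent. The implications $(\mathrm{iii})\Rightarrow(\mathrm{i})$ and $(\mathrm{iv})\Rightarrow(\mathrm{ii})$ are trivial (finite $\le 1$ is in particular finite), and $(\mathrm{v})\Rightarrow(\mathrm{iii})$ together with $(\mathrm{v})\Rightarrow(\mathrm{iv})$ is exactly the content of Lemma~\ref{lem:pd acyclic}: if $K^\bullet$ is acyclic then $\Ext^p_{\calc_1(\ca)}(K^\bullet,-)=0=\Ext^p_{\calc_1(\ca)}(-,K^\bullet)$ for all $p\ge 2$, whence $\pd K^\bullet\le 1$ and $\ind K^\bullet\le 1$. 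So the real work is in the two implications $(\mathrm{i})\Rightarrow(\mathrm{v})$ and $(\mathrm{ii})\Rightarrow(\mathrm{v})$; by the symmetry between projectives and injectives in the Frobenius category $\calc_1(\ca)$ (degreewise split exact structure), it suffices to treat $(\mathrm{i})\Rightarrow(\mathrm{v})$.

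So suppose $\pd K^\bullet=n<\infty$; I want to show $K^\bullet$ is acyclic, i.e. $H(K^\bullet)=0$. The key idea is to compare $K^\bullet$ with the stalk complex $C_{H(K^\bullet)}$ on its cohomology. First I would note that $H(K^\bullet)$ and $H(C_{H(K^\bullet)})=H(K^\bullet)$ agree, so by Lemma~\ref{lem: iso in singularity} there is an object $Z^\bullet\in\calc_1(\ca)$ and two short exact sequences $0\to U_1^\bullet\to Z^\bullet\to K^\bullet\to 0$ and $0\to U_2^\bullet\to Z^\bullet\to C_{H(K^\bullet)}\to 0$ with $U_1^\bullet,U_2^\bullet$ acyclic. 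Since acyclic complexes have projective dimension $\le 1$ (Lemma~\ref{lem:pd acyclic}), the first sequence forces $\pd Z^\bullet<\infty$ (projective dimension is sub-additive on short exact sequences up to a shift by one), and then the second sequence forces $\pd C_{H(K^\bullet)}<\infty$. Thus it is enough to show: \emph{if $X\in\ca$ is such that $\pd_{\calc_1(\ca)} C_X<\infty$, then $X=0$.} Equivalently, $\Ext^p_{\calc_1(\ca)}(C_X,-)$ does not vanish for large $p$ unless $X=0$.

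For this last step I would compute $\Ext$-groups in $\calc_1(\ca)$ against stalk complexes explicitly. The standard dévissage for periodic complexes gives, for $X,Y\in\ca$, a long exact sequence (periodic of period $2$ in high degrees, using that $\ca$ is hereditary) relating $\Ext^\bullet_{\calc_1(\ca)}(C_X,C_Y)$ to $\Ext^\bullet_\ca(X,Y)$; concretely one finds for $p\ge 1$ something like $\Ext^{2p}_{\calc_1(\ca)}(C_X,C_Y)\cong \Hom_\ca(X,Y)$ and $\Ext^{2p+1}_{\calc_1(\ca)}(C_X,C_Y)\cong \Ext^1_\ca(X,Y)$ (possibly with the roles permuted, and with corrections in degrees $0,1$). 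The cleanest route is to use the projective resolution of $C_X$ in $\calc_1(\ca)$ built from the acyclic complexes $K_X$: there is an exact sequence $\cdots\to K_X\to K_X\to K_X\to C_X\to 0$ in $\calc_1(\ca)$ whose syzygies are again stalk complexes, and applying $\Hom_{\calc_1(\ca)}(-,C_X)$ and taking cohomology shows $\Ext^{2p}_{\calc_1(\ca)}(C_X,C_X)$ contains a copy of $\Hom_\ca(X,X)\ni \Id_X$ for every $p$. Hence if $X\neq 0$ then $\Ext^{2p}_{\calc_1(\ca)}(C_X,C_X)\neq 0$ for all $p$, contradicting $\pd C_X<\infty$; therefore $X=0$. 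The main obstacle is precisely this explicit homological computation of $\Ext^\bullet_{\calc_1(\ca)}(C_X,C_X)$ — setting up the right periodic resolution of $C_X$ by sums of $K_X$'s and identifying its syzygies — but once that is in hand the corollary follows formally from Lemmas~\ref{lem:pd acyclic} and \ref{lem: iso in singularity}.
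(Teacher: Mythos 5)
Your proof is correct and is essentially the argument the paper has in mind: the paper's own proof is only a pointer to \cite[Corollary 2.12]{LW20} ``with the help of Lemma~\ref{lem:pd acyclic} and Lemma~\ref{lem: iso in singularity}'', and your reduction via Lemma~\ref{lem: iso in singularity} (plus subadditivity of $\pd$ along the two short exact sequences) to the stalk complex $C_{H(K^\bullet)}$, followed by showing $\pd C_X=\infty$ whenever $X\neq 0$, is exactly how those two lemmas are meant to be combined. The one step to phrase more carefully is the final $\Ext$-computation: since $K_X$ need not be projective in the abelian category $\calc_1(\ca)$, you should not literally take cohomology of $\Hom(-,C_X)$ applied to the $K_X$-resolution, but instead use the short exact sequence $0\to C_X\to K_X\to C_X\to 0$ together with Lemma~\ref{lem:pd acyclic} to obtain the dimension shift $\Ext^p_{\calc_1(\ca)}(C_X,-)\cong\Ext^{p+1}_{\calc_1(\ca)}(C_X,-)$ for all $p\ge 2$, and then read off from the degree-one part of the same long exact sequence that $\Ext^2_{\calc_1(\ca)}(C_X,C_X)$ is a quotient of $\Ext^1_{\calc_1(\ca)}(C_X,C_X)\cong\Ext^1_\ca(X,X)\oplus\Hom_\ca(X,X)$ (Lemma~\ref{lem:Ext=Ext+Hom}) by the image of $\Ext^1_{\calc_1(\ca)}(K_X,C_X)$, whose dimension equals $\dim_\bfk\Ext^1_\ca(X,X)$ by Lemma~\ref{lemma compatible of Euler form}(1) and $\Hom_{\calc_1(\ca)}(K_X,C_X)\cong\Hom_\ca(X,X)$, so that $\dim_\bfk\Ext^2_{\calc_1(\ca)}(C_X,C_X)\ge\dim_\bfk\Hom_\ca(X,X)>0$; the same computation in the second variable gives $(\mathrm{ii})\Rightarrow(\mathrm{v})$ directly, without needing the appeal to the Frobenius (degreewise split) structure, which is a different exact structure from the one defining $\pd$ and $\ind$ here.
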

	
	\begin{proof}
		The proof is the same as that of \cite[Corollary 2.12]{LW20}, now with the help of Lemma~ \ref{lem:pd acyclic} and Lemma \ref{lem: iso in singularity}.
	\end{proof}

	\begin{remark}
		\label{rem:weakly}
		For any hereditary abelian category $\ca$, it follows from Corollary \ref{cor:acyclic} that $\calc_1(\ca)$ is a weakly $1$-Gorenstein exact category. Therefore the general machinery of semi-derived Ringel-Hall algebras in \cite[Appendix A]{LW22} will be applicable to $\calc_1(\ca)$.
	\end{remark}

	\subsection{$\imath$Hall algebras}
	\label{subsec:iHall}
	
	We continue to work with a hereditary abelian category $\ca$ as in \S\ref{subsec:periodic}.
	Let $\ch(\calc_1(\ca))$ be the Ringel-Hall algebra of $\calc_1(\ca)$ over $\Q(\sqq)$, i.e., $\ch(\calc_1(\ca))=\bigoplus_{[X^\bullet]\in \Iso(\calc_1(\ca))} \Q(\sqq)[X^\bullet]$, with multiplication defined by
	\begin{align*}
		[M^\bullet]\diamond[N^\bullet]=\sum_{[L^\bullet]\in\Iso(\calc_1(\ca)) } \frac{|\Ext^1(M^\bullet,N^\bullet)_{L^\bullet}|}{|\Hom(M^\bullet,N^\bullet)|}[L^\bullet].
	\end{align*}
	Following \cite{LP21, LW22, LW20, LinP}, we consider the ideal $\cI$ of $\ch(\calc_1(\ca))$ generated by
	\begin{align}
		\label{eq:ideal}
		&\{[K_1^\bullet]-[K_2^\bullet] \mid K_1^\bullet,K_2^\bullet\in\calc_{1,ac}(\ca) \text{ with }\widehat{\Im d_{K_1^\bullet}}=\widehat{\Im d_{K_2^\bullet}}\} \bigcup
		\\\notag
		&\{[L^\bullet]-[K^\bullet\oplus M^\bullet]\mid \exists \text{ exact sequence } 0 \rightarrow K^\bullet \rightarrow L^\bullet \rightarrow M^\bullet \rightarrow 0 \text{ with }K^\bullet \text{ acyclic}\}.
	\end{align}
	We denote
	\[
	\cs:=\{ a[K^\bullet] \in \ch(\calc_1(\ca))/\cI \mid a\in \Q(\sqq)^\times, K^\bullet \text{ acyclic}\},
	\]
	a multiplicatively closed subset of $\ch(\calc_1(\ca))/ \cI$ with the identity $[0]$.

With the help of Corollary \ref{cor:acyclic} and Remark \ref{rem:weakly}, we have the following.
	
	\begin{lemma}
		[\text{\cite[Proposition A.5]{LW22}}]
		The multiplicatively closed subset $\cs$ is a right Ore, right reversible subset of $\ch(\calc_1(\ca))/\cI$. Equivalently, there exists the right localization of
		$\ch(\calc_1(\ca))/\cI$ with respect to $\cs$, denoted by $(\ch(\calc_1(\ca))/\cI)[\cs^{-1}]$.
	\end{lemma}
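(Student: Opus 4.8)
The statement to be proven is that $\cs$ is a right Ore and right reversible multiplicatively closed subset of the quotient $\ch(\calc_1(\ca))/\cI$, so that the right localization $(\ch(\calc_1(\ca))/\cI)[\cs^{-1}]$ exists. Since this is cited as \cite[Proposition A.5]{LW22}, the plan is to reduce the claim to the abstract localization criterion established there for semi-derived Ringel--Hall algebras of weakly $1$-Gorenstein exact categories, and then to check that $\calc_1(\ca)$ satisfies all the hypotheses of that framework. First I would invoke Remark~\ref{rem:weakly}: for any hereditary abelian $\ca$, Corollary~\ref{cor:acyclic} shows $\calc_1(\ca)$ is a weakly $1$-Gorenstein exact category, which is precisely the input required by the general machinery of \cite[Appendix A]{LW22}. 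Thus the content is to verify that the particular $\cs$ and $\cI$ defined in \S\ref{subsec:iHall} match the ones appearing in that abstract setup.

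\textbf{Key steps.} The verification proceeds as follows. (1) Identify the subcategory of objects of "finite projective dimension" in the exact category $\calc_1(\ca)$: by Corollary~\ref{cor:acyclic} this is exactly $\calc_{1,ac}(\ca)$, the acyclic complexes, and moreover every such object has $\pd\le 1$ and $\ind\le 1$. This is the "1-Gorenstein" bound that makes the Ore condition provable. (2) Check that $\cs$ consists of (scalar multiples of) classes $[K^\bullet]$ with $K^\bullet$ acyclic, which after passing to $\ch(\calc_1(\ca))/\cI$ form a multiplicatively closed set containing $[0]$: closure under $\diamond$ follows because an extension of two acyclic complexes is acyclic (acyclicity is extension-closed), together with the second family of relations in \eqref{eq:ideal} which identifies $[L^\bullet]$ with $[K^\bullet\oplus M^\bullet]$ whenever there is a short exact sequence with acyclic kernel. (3) Prove the right Ore condition: given $[M^\bullet]$ and $[K^\bullet]\in\cs$, produce $[M'^\bullet]$ and $[K'^\bullet]\in\cs$ with $[M^\bullet]\diamond[K'^\bullet]=[K^\bullet]\diamond[M'^\bullet]$ in the quotient. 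The standard trick here (as in \cite{LP21, LW20}) is to use that $K^\bullet$ has projective dimension $\le 1$ to build a suitable "resolution-type" short exact sequence $0\to K^\bullet\to P^\bullet\to K^\bullet{}'\to 0$ (or its injective dual) with the outer terms acyclic, and then use Lemma~\ref{lem:pd acyclic} (vanishing of higher Ext against acyclic complexes) to control the relevant Hall numbers. (4) Prove right reversibility: if $[s^\bullet]\in\cs$ and $[a^\bullet]\diamond[s^\bullet]=0$ in the quotient, then $[s'^\bullet]\diamond[a^\bullet]=0$ for some $[s'^\bullet]\in\cs$; again this rests on the $\Ext^{\ge 2}$-vanishing of Lemma~\ref{lem:pd acyclic} and the structure of $\cI$. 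All four steps are exactly the specialization of the proof of \cite[Proposition A.5]{LW22} to $\ce=\calc_1(\ca)$.

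\textbf{Main obstacle.} The only real work is confirming that the hypotheses of \cite[Appendix A]{LW22} genuinely hold for $\calc_1(\ca)$ — namely Hom-finiteness, $\Ext^1$-finiteness, and the weakly $1$-Gorenstein property with the associated bounds $\pd K^\bullet\le 1$, $\ind K^\bullet\le 1$ for acyclic $K^\bullet$. The finiteness conditions are inherited from the assumption on $\ca$ (finite-dimensional Hom and $\Ext^1$ spaces) since morphisms and extensions of $1$-periodic complexes are computed from those of $\ca$ via a bounded spectral-sequence/long-exact-sequence argument, while the Gorenstein bound is precisely Corollary~\ref{cor:acyclic}, itself resting on Lemmas~\ref{lem:pd acyclic} and \ref{lem: iso in singularity}. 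Once these are in place, the Ore and reversibility conditions follow formally from \cite[Proposition A.5]{LW22}, so I would present the proof as a short citation-plus-verification rather than reproving the localization criterion from scratch. The delicate point to watch is that the first family of relations in \eqref{eq:ideal} only identifies acyclic complexes with equal image-classes $\widehat{\Im d}$, not all acyclic complexes with the same cohomology (which would be $H(K^\bullet)=0$, automatically true); one must keep track of $\widehat{\Im d}$ as the genuine invariant recorded by $\cs$ in the localized algebra, exactly as in the quiver case.
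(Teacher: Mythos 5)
Your proposal is correct and matches the paper's treatment: the paper gives no independent proof, but simply notes that Corollary~\ref{cor:acyclic} and Remark~\ref{rem:weakly} establish that $\calc_1(\ca)$ is a weakly $1$-Gorenstein exact category, so that \cite[Proposition A.5]{LW22} applies directly. Your additional sketch of the internal Ore/reversibility arguments is consistent with that cited result but is not needed here.
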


	The algebra $(\ch(\calc_1(\ca))/\cI)[\cs^{-1}]$ is the {\em semi-derived Ringel-Hall algebra} of $\calc_1(\ca)$ in the sense of \cite{LP21, LW22} (also cf. \cite{Gor13}), and will be denoted by $\cs\cd\ch(\calc_1(\ca))$.
	
	For any $\alpha\in K_0(\ca)$,  there exist $X,Y\in\ca$ such that $\alpha=\widehat{X}-\widehat{Y}$. Define $[K_\alpha]:=[K_X]\diamond [K_Y]^{-1}$. This is well defined, see, e.g.,
	\cite[\S 3.2]{LP21}. Denote by $\ct(\ca)$ the subalgebra of $\cs\cd\ch(\calc_1(\ca))$ generated by all acyclic complexes $[K^\bullet]$.
	
	
	
	
	The following lemma is well known.
	\begin{lemma}
		\label{lem:Ext=Ext+Hom}
		For  $X, Y \in \ca$, we have
		\begin{align*}
			\Ext^1_{\calc_1(\ca)}(C_X,C_Y)\cong\Ext^1_\ca(X,Y)\oplus \Hom_\ca(X,Y).
		\end{align*}
	\end{lemma}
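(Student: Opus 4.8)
The plan is to compute both sides of the claimed isomorphism directly from the definitions, using the degreewise split exact structure on $\calc_1(\ca)$. First I would recall that, for $1$-periodic complexes, a Yoneda $\Ext^1_{\calc_1(\ca)}(C_X, C_Y)$-class is represented by a short exact sequence $0 \to C_Y \to Z^\bullet \to C_X \to 0$ which, since the exact structure is the degreewise split one, has underlying object $Z = Y \oplus X$ equipped with some differential $d_Z$. Writing $d_Z$ as a $2\times 2$ matrix with respect to this decomposition, the condition $d_Z^2 = 0$ together with the requirement that the inclusion $C_Y \hookrightarrow Z^\bullet$ and projection $Z^\bullet \twoheadrightarrow C_X$ be chain maps (recall $C_X, C_Y$ have zero differential) forces $d_Z$ to have the upper-triangular shape $\begin{pmatrix} 0 & g \\ 0 & 0 \end{pmatrix}$ for a morphism $g : X \to Y$ in $\ca$, with no constraint on $g$ since $d_Z^2 = 0$ automatically. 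So the extension data is governed by a morphism in $\ca$ — this accounts for the $\Hom_\ca(X,Y)$ summand.

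The point, however, is that the construction above is not quite the right bookkeeping: the genuine $\Ext^1$ in $\calc_1(\ca)$ must also see extensions of the underlying objects in $\ca$. The clean way I would organize this is via the standard resolution. For any $X \in \ca$, there is a short exact sequence in $\calc_1(\ca)$ of the form $0 \to C_X \to K_X \to C_X \to 0$ where $K_X$ is the acyclic complex $(X\oplus X, \left(\begin{smallmatrix} 0 & \Id \\ 0 & 0\end{smallmatrix}\right))$ introduced just before Lemma~\ref{lem:pd acyclic}; and more usefully there are exact sequences expressing $C_X$ in terms of objects built from $K$'s and stalk complexes. Concretely I would take a short exact sequence $0 \to C_Y \to I_Y \to C_{Y'} \to 0$ or dually use that every stalk complex has a two-term "projective-like" resolution by acyclic complexes (using $\pd K^\bullet \le 1$ from Corollary~\ref{cor:acyclic}), apply $\Hom_{\calc_1(\ca)}(C_X, -)$, and read off $\Ext^1_{\calc_1(\ca)}(C_X, C_Y)$ from the resulting long exact sequence. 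The acyclic terms contribute, via their Hom-spaces with $C_X$, exactly a copy of $\Hom_\ca$ and a copy of $\Ext^1_\ca$; keeping track of which is which gives the direct sum decomposition. An alternative, perhaps cleaner, route is to use the covering functor $\pi : \cd^b(\ca) \to \cd_1(\ca)$ from \eqref{dereq}: one has $\Ext^1_{\cd_1(\ca)}(C_X, C_Y) \cong \bigoplus_{n \in \bbZ} \Hom_{\cd^b(\ca)}(X, Y[1+2n])$, which for hereditary $\ca$ collapses to the $n=0$ and $n=-1$ terms, namely $\Ext^1_\ca(X,Y) \oplus \Hom_\ca(X, Y[-1]\text{-shift})$ — but one must then compare $\Ext^1$ in the derived category $\cd_1(\ca)$ with $\Ext^1$ in the abelian category $\calc_1(\ca)$, which requires knowing these agree (they do in low degrees because $\calc_1(\ca) \to \cd_1(\ca)$ behaves well, essentially again by the $\pd \le 1$ statement).

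I expect the main obstacle to be purely bookkeeping: making sure the two summands $\Ext^1_\ca(X,Y)$ and $\Hom_\ca(X,Y)$ are correctly identified and that the splitting is canonical (or at least natural enough for later use), rather than merely an abstract equality of dimensions. In particular, one should check that the map $\Ext^1_\ca(X,Y) \to \Ext^1_{\calc_1(\ca)}(C_X, C_Y)$ sending a short exact sequence $0 \to Y \to E \to X \to 0$ in $\ca$ to the same sequence of stalk complexes $0 \to C_Y \to C_E \to C_X \to 0$ is a well-defined split injection, and that the complementary summand is realized by the differentials $\left(\begin{smallmatrix} 0 & g \\ 0 & 0\end{smallmatrix}\right)$ on $X \oplus Y$ as described above; the Baer sum on the $\calc_1(\ca)$ side must be seen to restrict to the Baer sum in $\ca$ on the first factor and to addition of morphisms $g$ on the second. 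Since $\ca$ is hereditary these are the only contributions (higher self-extensions of stalk complexes through acyclics vanish by Lemma~\ref{lem:pd acyclic}), so no further terms appear. Given the amount of standard structure already set up in the excerpt, I would present this as a short argument invoking the resolution $0 \to C_Y \to K_Y \to C_Y \to 0$ (or its injective analogue) plus Lemma~\ref{lem:pd acyclic}, rather than a matrix computation, to keep it clean.
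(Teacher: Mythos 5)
The paper offers no proof of this lemma (it is cited as ``well known''), so there is nothing to compare against line by line; your direct computation is the standard argument and it does work. Two points need correcting, though. First, the opening framing is wrong: the $\Ext^1_{\calc_1(\ca)}$ in the statement is the Yoneda $\Ext^1$ of the \emph{abelian} category $\calc_1(\ca)$, not of the degreewise split (Frobenius) exact structure, which is used only to form $\ck_1(\ca)$. Hence the underlying object of an extension of $C_X$ by $C_Y$ is not forced to be $X\oplus Y$; it is an arbitrary extension $0\to Y\xrightarrow{i} Z\xrightarrow{p} X\to 0$ in $\ca$, and the chain-map conditions on $i$ and $p$ force the differential to be $d=i\circ g\circ p$ for a unique $g\in\Hom_\ca(X,Y)$ (with $d^2=0$ automatic since $pi=0$). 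You do recognize this in your second paragraph, and the resulting bijection of extension classes with pairs (class of the underlying extension, $g$), checked to be compatible with Baer sums componentwise, is precisely the proof. Note that this argument needs neither the heredity of $\ca$ nor Lemma~\ref{lem:pd acyclic}; your closing appeal to heredity to rule out ``further terms'' is superfluous, since the classification of extensions above is already exhaustive for any abelian $\ca$.

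Second, in your orbit-category alternative the relevant orbit for $1$-periodic complexes is by $\Sigma$, not $\Sigma^2$, so the formula should be $\bigoplus_{n\in\bbZ}\Hom_{\cd^b(\ca)}(X,Y[1+n])$; with your indexing $Y[1+2n]$ the $n=-1$ term is $\Hom_{\cd^b(\ca)}(X,Y[-1])=0$ and the $\Hom_\ca(X,Y)$ summand would be lost. That route also requires identifying $\Ext^1_{\calc_1(\ca)}(C_X,C_Y)$ with $\Hom_{\cd_1(\ca)}(C_X,\Sigma C_Y)$, which is not automatic (as $\cd_1(\ca)$ is not the derived category of the abelian category $\calc_1(\ca)$) and which you acknowledge but do not supply. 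The direct matrix computation avoids all of this and is the version to keep.
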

	
	For any $K^\bullet\in\calc_{1,ac}(\ca)$ and $M^\bullet\in\calc_{1}(\ca)$, by Corollary \ref{cor:acyclic}, define
	\begin{align*}
		\langle K^\bullet, M^\bullet\rangle=\dim_{\bfk} \Hom_{\calc_{1}(\ca)}( K^\bullet, M^\bullet)-\dim_{\bfk}\Ext^1_{\calc_{1}(\ca)}(K^\bullet, M^\bullet),
		\\
		\langle  M^\bullet,K^\bullet\rangle=\dim_{\bfk} \Hom_{\calc_{1}(\ca)}( M^\bullet, K^\bullet)-\dim_{\bfk}\Ext^1_{\calc_{1}(\ca)}( M^\bullet,K^\bullet).
	\end{align*}
	These formulas give rise to well-defined bilinear forms (called {\em Euler forms}), again denoted by $\langle \cdot, \cdot \rangle$, on the Grothendieck groups $K_0(\calc_{1,ac}(\ca))$
	and $K_0(\calc_{1}(\ca))$.
	
	Denote by $\langle \cdot,\cdot\rangle_\ca$ the Euler form of $\ca$.
	Let $\res: \calc_1(\ca)\rightarrow\ca$ be the restriction functor.
	Then we have the following.
	\begin{lemma}
		\label{lemma compatible of Euler form}
		We have
		\begin{itemize}
			\item[(1)]
			$\langle K_X, M^\bullet\rangle = \langle X,\res (M^\bullet) \rangle_\ca$,\; $\langle M^\bullet,K_X\rangle =\langle \res(M^\bullet), X \rangle_\ca$, for $X\in \ca$, $M^\bullet\in\calc_1(\ca)$;
			\item[(2)] $\langle M^\bullet,N^\bullet\rangle=\frac{1}{2}\langle \res(M^\bullet),\res(N^\bullet)\rangle_\ca$, for  $M^\bullet,N^\bullet\in\calc_{1,ac}(\ca)$.
		\end{itemize}
	\end{lemma}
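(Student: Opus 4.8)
The plan is to prove Part~(1) first and then deduce Part~(2) from it by a Grothendieck-group reduction. For Part~(1) I would begin with the two elementary identifications
$\Hom_{\calc_1(\ca)}(K_X,M^\bullet)\cong\Hom_\ca(X,\res M^\bullet)$ and $\Hom_{\calc_1(\ca)}(M^\bullet,K_X)\cong\Hom_\ca(\res M^\bullet,X)$: writing $K_X=(X\oplus X,d)$ as in its definition, a chain map $K_X\to M^\bullet$ is freely and uniquely determined by its component on the second copy of $X$, while a chain map $M^\bullet\to K_X$ is freely determined by its component into the first copy of $X$. Next, since $\pd_{\calc_1(\ca)}K_X\le 1$ by Corollary~\ref{cor:acyclic}, one has $\Ext^{\ge 2}_{\calc_1(\ca)}(K_X,-)=0$, so $\langle K_X,-\rangle$ is additive on short exact sequences and thus descends to $K_0(\calc_1(\ca))$; and $M^\bullet\mapsto\langle X,\res M^\bullet\rangle_\ca$ also descends to $K_0(\calc_1(\ca))$ because $\res$ is exact. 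Since $\calc_1(\ca)$ is the extension closure of the stalk complexes $C_Y$ (this is used already in the proof of Lemma~\ref{lem:pd acyclic}), $K_0(\calc_1(\ca))$ is generated by the classes $[C_Y]$, so it suffices to prove $\langle K_X,C_Y\rangle=\langle X,Y\rangle_\ca$. Given the Hom-identity above, this reduces to the single assertion $\Ext^1_{\calc_1(\ca)}(K_X,C_Y)\cong\Ext^1_\ca(X,Y)$.

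This isomorphism is where the real work lies, and I expect it to be the main obstacle. I would analyze an extension $0\to C_Y\to E^\bullet\xrightarrow{p}K_X\to 0$ in $\calc_1(\ca)$ through its image $0\to Y\to\res E^\bullet\to X\oplus X\to 0$ under $\res$, together with the extra datum of the differential $d_{E^\bullet}$. The constraints $d_{E^\bullet}|_{C_Y}=0$, $p\,d_{E^\bullet}=d_{K_X}\,p$ and $d_{E^\bullet}^2=0$ pin down $d_{E^\bullet}$ enough that: (i) if $\res E^\bullet$ splits then $d_{E^\bullet}$, on $\res E^\bullet\cong C_Y\oplus K_X$, lies in an explicit one-parameter family indexed by $\Hom_\ca(X,Y)$, all of whose members are isomorphic as extensions of $K_X$ by $C_Y$ and in particular split — so $[E^\bullet]\mapsto[\res E^\bullet]$ is injective; and (ii) a class $\xi\in\Ext^1_\ca(X\oplus X,Y)$ arises from $\calc_1(\ca)$ exactly when $d_{E^\bullet}$ can be lifted, which happens iff the pullback of $\xi$ along the inclusion $\ker d_{K_X}\cong X\hookrightarrow X\oplus X$ vanishes; as this pullback map is the direct-summand projection $\Ext^1_\ca(X\oplus X,Y)=\Ext^1_\ca(X,Y)^{\oplus 2}\twoheadrightarrow\Ext^1_\ca(X,Y)$, its kernel is a complement isomorphic to $\Ext^1_\ca(X,Y)$. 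Combining (i) and (ii) gives the claimed isomorphism, hence $\langle K_X,C_Y\rangle=\langle X,Y\rangle_\ca$ and, by the reduction above, the first equality of Part~(1). The second equality then follows formally by applying the first to the opposite category $\ca^{\mathrm{op}}$ (again hereditary, with $\calc_1(\ca)^{\mathrm{op}}=\calc_1(\ca^{\mathrm{op}})$, preserving $K_X$, $C_Y$ and $\res$ while interchanging the two arguments of the Euler form).

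For Part~(2), both $\langle\cdot,\cdot\rangle$ restricted to pairs of acyclic complexes and $\tfrac12\langle\res(\cdot),\res(\cdot)\rangle_\ca$ are biadditive on short exact sequences of acyclic complexes — the former because acyclic complexes have $\pd$ and $\ind$ at most $1$ (Corollary~\ref{cor:acyclic}), the latter because $\res$ is exact — hence both descend to bilinear forms on $K_0(\calc_{1,ac}(\ca))$. Since $K_0(\calc_{1,ac}(\ca))\cong K_0(\ca)$ via $[K^\bullet]\mapsto\widehat{\Im d_{K^\bullet}}$, so that it is generated by the classes $[K_X]$ with $X\in\ca$, it is enough to verify the identity on pairs $(K_X,K_Y)$; there Part~(1) already gives $\langle K_X,K_Y\rangle=\langle X,\res K_Y\rangle_\ca=\langle X,Y\oplus Y\rangle_\ca=2\langle X,Y\rangle_\ca=\tfrac12\langle X\oplus X,Y\oplus Y\rangle_\ca=\tfrac12\langle\res K_X,\res K_Y\rangle_\ca$. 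Thus, apart from steps (i)–(ii) above (the honest bookkeeping of differentials on extensions of $K_X$ by $C_Y$), everything is formal once Corollary~\ref{cor:acyclic} and the descent of the Euler forms to the Grothendieck groups (recorded just before the lemma) are available.
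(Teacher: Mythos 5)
Your Part~(1) is correct, and the heart of it --- the identification $\Ext^1_{\calc_1(\ca)}(K_X,C_Y)\cong\Ext^1_\ca(X,Y)$ via the restriction of extensions together with the analysis of admissible differentials --- checks out: extensions whose restriction splits are split (your matrix computation forces the differential into a single $\Hom_\ca(X,Y)$-orbit under equivalences), and the image of $\res$ on $\Ext^1$ is exactly the classes vanishing on $\ker d_{K_X}$, which is realized by putting the differential $(a,e)\mapsto(p_2(e),0)$ on $X\oplus E_2$. The paper itself only cites \cite{LP21,LW22} here, so there is nothing to compare against, but this part of your argument is a legitimate self-contained proof.

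Part~(2), however, has a genuine gap at the step ``$K_0(\calc_{1,ac}(\ca))\cong K_0(\ca)$ via $[K^\bullet]\mapsto\widehat{\Im d_{K^\bullet}}$, so that it is generated by the classes $[K_X]$.'' Surjectivity and well-definedness of this map are fine, but the generation statement (equivalently, injectivity) is exactly what your reduction needs and is not at all automatic. Note first that acyclic complexes need not be isomorphic to any $K_X$: if $0\to I\xrightarrow{\iota} K\xrightarrow{\pi} I\to 0$ is a non-split self-extension in $\ca$ (plentiful in $\coh(\PL)$), then $(K,\iota\pi)$ is acyclic but not of the form $K_{X}$, since an isomorphism onto some $K_{X}$ would split the defining extension. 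So you must express $[K^\bullet]$ as a $\Z$-combination of $[K_X]$'s using short exact sequences \emph{of acyclic complexes}, and the natural ones (e.g.\ $0\to(K,-d)\to K_K\xrightarrow{(d,\,\mathrm{id})}(K,d)\to 0$, or the cone of the identity) only yield $[(K,d)]+[(K,-d)]=[K_K]=2[K_{\Im d}]$; this does not give $[(K,d)]=[K_{\Im d}]$ without further input, and correspondingly your dévissage only proves $\langle M^\bullet,N^\bullet\rangle+\langle (M,-d_M),N^\bullet\rangle=\langle\res M^\bullet,\res N^\bullet\rangle_\ca$. The gap is repairable inside your framework: the automorphism $\sigma:(M,e)\mapsto(M,-e)$ of $\calc_1(\ca)$ fixes every stalk complex, so $\langle(K,d),C_Z\rangle=\langle(K,-d),C_Z\rangle$, whence the displayed sum gives $\langle(K,d),C_Z\rangle=\tfrac12\langle K,Z\rangle_\ca$ by Part~(1); then, since $\Ext^{\ge2}_{\calc_1(\ca)}(M^\bullet,-)=0$ for $M^\bullet$ acyclic, $\langle M^\bullet,-\rangle$ is additive over all of $K_0(\calc_1(\ca))$, where $[N^\bullet]=2[C_{\Im d_N}]$ holds for trivial reasons, and Part~(2) follows. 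As written, though, the reduction to pairs $(K_X,K_Y)$ rests on an unproved (and nonobvious) structural claim about $K_0(\calc_{1,ac}(\ca))$.
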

	
	\begin{proof}
		The proof is the same as for \cite[Proposition 2.4]{LP21} and \cite[Lemma 4.3]{LW22}, hence omitted here.
	\end{proof}
	
	Define
	\[
	\sqq :=\sqrt{q}.
	\]
	
	\begin{definition}  \label{def:iH}
		The {\em $\imath$Hall algebra} of a hereditary abelian category $\ca$, denoted by $\iH(\ca)$,  is defined to be the twisted semi-derived Ringel-Hall algebra of $\calc_1(\ca)$, that is, the $\Q(\sqq)$-algebra on the same vector space as $\cs\cd\ch(\calc_1(\ca)) =(\ch(\calc_1(\ca))/\cI)[\cs^{-1}]$ equipped with the following modified multiplication (twisted via the restriction functor $\res: \calc_1(\ca)\rightarrow\ca$)
		\begin{align}
			\label{eq:tH}
			[M^\bullet]* [N^\bullet] =\sqq^{\langle \res(M^\bullet),\res(N^\bullet)\rangle_\ca} [M^\bullet]\diamond[N^\bullet].
		\end{align}
	\end{definition}
	For any complex $M^\bullet$ and acyclic complex $K^\bullet$, we have
	\[
	[K^\bullet]*[M^\bullet]=[K^\bullet\oplus M^\bullet]=[M^\bullet]*[ K^\bullet].
	\]
	It follows that $[K_\alpha]\; (\alpha\in K_0(\ca))$ are central in the algebra $\iH(\ca)$.
	
	
	The {\em quantum torus} $\widetilde{\ct}(\ca)$ is defined to be the subalgebra of $\iH(\ca)$ generated by $[K_\alpha]$, for $\alpha\in K_0(\ca)$. 
	
	\begin{proposition}  [cf. \cite{LW22,LW20}]
		\label{prop:hallbasis}
		The folllowing hold in $\iH(\ca)$:
		\begin{enumerate}
			\item
			The quantum torus $\widetilde{\ct}(\ca)$ is a central subalgebra of $\iH(\ca)$.
			\item
			The algebra $\widetilde{\ct}(\ca)$ is isomorphic to the group algebra of the abelian group $K_0(\ca)$.
			\item
			$\iH(\ca)$ has an ($\imath$Hall) basis given by
			\begin{align*}
				\{[M]*[K_\alpha]\mid [M]\in\Iso(\ca), \alpha\in K_0(\ca)\}.
			\end{align*}
		\end{enumerate}
	\end{proposition}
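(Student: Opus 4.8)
The plan is to deduce all three assertions from the structural results already established in the excerpt, following the template of \cite{LW22, LW20} but using the weakly $1$-Gorenstein property of $\calc_1(\ca)$ recorded in Remark \ref{rem:weakly}. First I would establish (1)--(2) together. By construction $\widetilde\ct(\ca)$ is generated by the classes $[K_\alpha]$ for $\alpha \in K_0(\ca)$, and the remark after Definition \ref{def:iH} already shows each $[K_\alpha]$ is central in $\iH(\ca)$; hence $\widetilde\ct(\ca)$ is a central subalgebra. To identify it with the group algebra $\Q(\sqq)[K_0(\ca)]$, I would check that $\alpha \mapsto [K_\alpha]$ is a well-defined group homomorphism: well-definedness on $K_0(\ca)$ is cited from \cite[\S3.2]{LP21}, and $[K_\alpha]*[K_\beta] = [K_\alpha \oplus K_\beta] = [K_{\alpha+\beta}]$ follows since $[K^\bullet]*[M^\bullet] = [K^\bullet \oplus M^\bullet]$ for acyclic $K^\bullet$ together with the fact that $K_X \oplus K_Y$ and $K_{X\oplus Y}$ have the same image of the differential, so they are identified modulo $\cI$ via the first family of relations in \eqref{eq:ideal}. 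Linear independence of $\{[K_\alpha]\}_{\alpha\in K_0(\ca)}$ follows once (3) is proved (they form the $M = 0$ part of the claimed basis), so I would organize the argument to prove (3) first and then read off (1)--(2).

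For (3), the strategy is the standard two-step reduction for semi-derived Hall algebras. \emph{Spanning:} given any $[X^\bullet] \in \Iso(\calc_1(\ca))$, I would use Corollary \ref{cor:acyclic} and Lemma \ref{lem: iso in singularity} to find a short exact sequence $0 \to U^\bullet \to Z^\bullet \to X^\bullet \to 0$ with $U^\bullet$ acyclic and $H(Z^\bullet) = H(X^\bullet)$; taking $Z^\bullet$ to be (the image of) $C_M$ for $M = H(X^\bullet) \in \ca$, the second family of relations in \eqref{eq:ideal} gives $[C_M \oplus U^\bullet] = [Z^\bullet]$ in $\ch(\calc_1(\ca))/\cI$, and inverting the acyclic class $[U^\bullet] \in \cs$ expresses $[X^\bullet]$ as a scalar multiple of $[C_M]*[K_\alpha]^{\pm}$ for a suitable $\alpha$. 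One must also handle an arbitrary acyclic summand using the relation $\widehat{\Im d_{K_1^\bullet}} = \widehat{\Im d_{K_2^\bullet}} \Rightarrow [K_1^\bullet]=[K_2^\bullet]$, so that every acyclic complex reduces to a product of the $[K_\alpha]$'s; this shows $\{[M]*[K_\alpha]\}$ spans. \emph{Linear independence:} here I would invoke the general filtration/grading argument from \cite[Appendix A]{LW22} (applicable by Remark \ref{rem:weakly}): the semi-derived Hall algebra is free as a module over the quantum torus with basis indexed by $\Iso(\ca)$, because the localization only inverts acyclic classes and the reduction modulo $\cI$ is compatible with the cohomology grading $X^\bullet \mapsto H(X^\bullet)$.

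The main obstacle I anticipate is the linear-independence half of (3): one cannot just quote that $\{[M]\}$ is a Hall basis of $\ch(\ca)$, because passing to $\calc_1(\ca)$, modding out by $\cI$, and localizing at $\cs$ could a priori create relations. The clean way around this is to exhibit an explicit $\Q(\sqq)$-linear isomorphism $\iH(\ca) \xrightarrow{\sim} \ch(\ca) \otimes_{\Q(\sqq)} \widetilde\ct(\ca)$ (as in the proofs in \cite{LW22, LW20}), sending $[M]*[K_\alpha] \mapsto [M]\otimes [K_\alpha]$; surjectivity is the spanning statement above, and injectivity follows from the fact — guaranteed by Corollary \ref{cor:acyclic} and Lemma \ref{lemma compatible of Euler form} — that $\Ext^1$ and $\Hom$ in $\calc_1(\ca)$ between a stalk complex and an acyclic complex are controlled entirely by the Euler form, so the localization is "flat enough" over $\widetilde\ct(\ca)$. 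Since the excerpt explicitly says the proof follows \cite{LW22, LW20}, I would present the above reduction and then refer to those sources for the remaining bookkeeping rather than reproduce it in full.
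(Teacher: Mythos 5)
Your proposal is correct and follows essentially the same route as the paper, which notes that (1) was established just above the proposition (centrality of the $[K_\alpha]$), cites \cite[Theorem 3.6]{LW20} for the freeness/basis statement (3), and deduces (2) from (3). Your spanning step is in fact already packaged in Lemma \ref{lem:Cf} ($[M^\bullet]=[H(M^\bullet)]*[K_{\Im d}]$), and your deferral of the linear-independence bookkeeping to \cite{LW22,LW20} matches exactly what the paper does.
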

	
	\begin{proof}
		Part (1)  has been proved above.
		The proof of (3) is the same as \cite[Theorem~ 3.6]{LW20}, and hence omitted here. Part (2) follows from (3).
	\end{proof}
	
	For any $f:X\rightarrow Y$ in $\ca$, we denote by
	\begin{align*}
		C_f:=\Big( Y\oplus X,  \begin{pmatrix} 0&f \\0&0  \end{pmatrix}  \Big)\in\calc_1(\ca).
	\end{align*}
	
	\begin{lemma}
		\label{lem:Cf}
		For any $M^\bullet=(M,d)$, we have $[M^\bullet]=[H(M^\bullet)]*[K_{\Im d}]$ in $\iH(\ca)$. In particular, for any $f:X\rightarrow Y$, we have
		\begin{align*}
			[C_f]= [\ker f\oplus \coker f]*[K_{\Im f}].
		\end{align*}
	\end{lemma}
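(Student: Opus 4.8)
The plan is to derive the first identity $[M^\bullet]=[H(M^\bullet)]*[K_{\Im d}]$ from Lemma~\ref{lem: iso in singularity} together with the defining relations \eqref{eq:ideal} of the semi-derived Ringel-Hall algebra, and then read off the formula for $[C_f]$ as a special case. First I would apply Lemma~\ref{lem: iso in singularity} to the pair $X^\bullet=M^\bullet=(M,d)$ and $Y^\bullet=C_{H(M^\bullet)}=(H(M^\bullet),0)$, whose cohomologies coincide. This produces short exact sequences
\[
0\longrightarrow U_1^\bullet\longrightarrow Z^\bullet\longrightarrow M^\bullet\longrightarrow 0,
\qquad
0\longrightarrow U_2^\bullet\longrightarrow Z^\bullet\longrightarrow C_{H(M^\bullet)}\longrightarrow 0
\]
in $\calc_1(\ca)$ with $U_1^\bullet,U_2^\bullet$ acyclic. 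The second family of generators of the ideal $\cI$, together with the identity $[K^\bullet]*[N^\bullet]=[K^\bullet\oplus N^\bullet]$ valid for acyclic $K^\bullet$, then gives $[U_1^\bullet]*[M^\bullet]=[Z^\bullet]=[U_2^\bullet]*[H(M^\bullet)]$ in $\iH(\ca)$. Since acyclic complexes lie in the Ore set $\cs$, they are invertible in $\iH(\ca)$, so $[M^\bullet]=[U_1^\bullet]^{-1}*[U_2^\bullet]*[H(M^\bullet)]$, with $[U_1^\bullet]^{-1}*[U_2^\bullet]$ an element of the quantum torus $\widetilde{\ct}(\ca)$.

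Next I would identify this quantum-torus element with $[K_{\Im d}]$. The first family of generators of $\cI$ forces $[K_1^\bullet]=[K_2^\bullet]$ whenever $\widehat{\Im d_{K_1^\bullet}}=\widehat{\Im d_{K_2^\bullet}}$; comparing $U_i^\bullet$ with the standard acyclic complex $K_{\Im d_{U_i^\bullet}}$ then yields $[U_i^\bullet]=[K_{\widehat{\Im d_{U_i^\bullet}}}]$ for $i=1,2$. Since $\widetilde{\ct}(\ca)$ is the group algebra of $K_0(\ca)$ (Proposition~\ref{prop:hallbasis}), it suffices to compute $\widehat{\Im d_{U_2^\bullet}}-\widehat{\Im d_{U_1^\bullet}}$ in $K_0(\ca)$. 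Applying the restriction functor $\res$ to the two short exact sequences gives $\widehat{U_1}+\widehat{M}=\widehat{Z}=\widehat{U_2}+\widehat{H(M^\bullet)}$; combining this with $\widehat{U_i}=2\,\widehat{\Im d_{U_i^\bullet}}$ (acyclicity) and $\widehat{M}=\widehat{H(M^\bullet)}+2\,\widehat{\Im d}$ (the filtration $\Im d\subseteq\ker d\subseteq M$, whose subquotients are $\Im d$, $H(M^\bullet)$, $\Im d$) yields $\widehat{\Im d_{U_2^\bullet}}-\widehat{\Im d_{U_1^\bullet}}=\widehat{\Im d}$, hence $[U_1^\bullet]^{-1}*[U_2^\bullet]=[K_{\Im d}]$. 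Because $[K_\alpha]$ is central in $\iH(\ca)$, this gives $[M^\bullet]=[H(M^\bullet)]*[K_{\Im d}]$.

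Finally, for the ``in particular'' statement I would just unwind the complex $C_f=\bigl(Y\oplus X,\begin{pmatrix}0&f\\0&0\end{pmatrix}\bigr)$: its differential has image $\Im f$ (sitting inside $Y$) and kernel $Y\oplus\ker f$, so $H(C_f)\cong\coker f\oplus\ker f$ and $\widehat{\Im d_{C_f}}=\widehat{\Im f}$. Plugging this into the first part gives $[C_f]=[\ker f\oplus\coker f]*[K_{\Im f}]$.

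I expect the only genuinely delicate point to be the Grothendieck-group bookkeeping in the second step — in particular making sure the factor of $2$ coming from acyclicity of the $U_i^\bullet$ matches the factor of $2$ in $\widehat{M}=\widehat{H(M^\bullet)}+2\,\widehat{\Im d}$ — together with the (routine) verification that the relations \eqref{eq:ideal} and the invertibility of acyclic classes survive passage to the twisted, localized algebra $\iH(\ca)$; both are immediate from the facts recalled in \S\ref{subsec:iHall}.
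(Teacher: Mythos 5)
Your proof is correct and follows essentially the same route as the paper's: both reduce to Lemma~\ref{lem: iso in singularity}, use the two families of generators of $\cI$ to write $[M^\bullet]=[U_1^\bullet]^{-1}*[U_2^\bullet]*[H(M^\bullet)]$, and identify the acyclic factor with $[K_{\Im d}]$ by comparing classes in $K_0(\ca)$. The only difference is that where the paper cites \cite[Lemma 3.12]{LP21} for the relation $\widehat{\Im d_{U_1^\bullet}}+\widehat{\Im d}=\widehat{\Im d_{U_2^\bullet}}$, you derive it directly from the filtration $\Im d\subseteq\ker d\subseteq M$ and acyclicity of the $U_i^\bullet$ — a harmless (indeed clarifying) substitution.
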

	
	\begin{proof}
		By \eqref{eq:ideal}, if $M^\bullet$ is acyclic, then we have $[M^\bullet]=[K_{\Im d}]=[K_{\widehat{\Im d}}]$.
		For general $M^\bullet$, note that $M^\bullet \cong H(M^\bullet)$ in $\cd_1(\ca)$. By Lemma \ref{lem: iso in singularity}, we have the following two
		exact sequences
		\begin{align*}
			0\longrightarrow U_1^\bullet\longrightarrow Z^\bullet \longrightarrow H(M^\bullet)\longrightarrow0,
			\qquad 0\longrightarrow U_2^\bullet\longrightarrow Z^\bullet\longrightarrow M^\bullet\longrightarrow0
		\end{align*}
		with $U^\bullet_1,U^\bullet_2\in \calc_{1,ac}(\ca)$.
		Similar to \cite[Lemma 3.12]{LP21}, we have
		$$\widehat{\Im d_{U_1^\bullet}}= \widehat{\Im d_{Z^\bullet}}=\widehat{\Im d_{U_2^\bullet}}+\widehat{\Im d}.$$
		Then
		\begin{align*}
			[M^\bullet]=&[Z^\bullet]*[U_2^\bullet]^{-1}
			=[H(M^\bullet)]* [ U_1^\bullet]*[U_2^\bullet]^{-1}
			\\
			=&[H(M^\bullet)]*[ K_{\Im d_{U_1^\bullet}}]*[K_{\Im d_{U_2^\bullet}}]^{-1}
			\\
			=&[H(M^\bullet)\oplus K_{\Im d}].
		\end{align*}
		The lemma is proved.
	\end{proof}
	
	In the remainder of this paper we will study the $\imath$Hall algebras of the hereditary abelian categories $\ca$ in the following example, which are intimately related to each other.

	\begin{example}
		\label{ex:Q}
		\quad
		\begin{enumerate}
			\item
			$\ca= \coh(\PL)$.
			\item
			Let  $Q$ be a quiver. Recall the $\imath$quiver algebra $\Lambda^\imath$ associated to a split $\imath$quiver $(Q,\Id)$ from \cite{LW22, LW20} is given by $\Lambda^\imath =\Bbbk Q \otimes \Bbbk [\varepsilon]/(\varepsilon^2)$. Take $\ca =\rep^{\rm nil}_\bfk(Q)$. The category $\calc_1(\rep^{\rm nil}_\bfk(Q))$ 
			can be naturally identified with the category $\rep^{\rm nil}(\Lambda^\imath)$. 
			We shall specialize the quiver to the Jordan quiver $\QJ$ in \S\ref{subsec:Jordan} and then the Kronecker quiver $\QK$ in Section~\ref{sec:derived}.
		\end{enumerate}
	\end{example}

	The $\imath$Hall algebras for the $\imath$quiver algebras in Example \ref{ex:Q}(2) were studied in depth in \cite{LW22, LW20}. To study the $\imath$Hall algebra  for $\coh(\P^1_\bfk)$, we shall need the preparatory results in \S\ref{subsec:periodic}-\S\ref{subsec:iHall}.

	\section{The $q$-Onsager algebra and its current presentation}
	\label{sec:Onsager}
	
	In this section, we review the (universal) $q$-Onsager algebra and its Drinfeld type presentation from \cite{LW20}.

	\subsection{The $q$-Onsager algebra}
	\label{subsec:On}
	
	For $n\in \Z, r\in \N$, denote by
	\[
	[n] =\frac{v^n -v^{-n}}{v-v^{-1}},\qquad
	\qbinom{n}{r} =\frac{[n][n-1]\ldots [n-r+1]}{[r]!}.
	\]
	For $A, B$ in a $\Q(v)$-algebra, we shall denote $[A,B]_{v^a} =AB -v^aBA$, and $[A,B] =AB - BA$.
	
	Recall \cite{LW22, LW20} (compare \cite{BK20}) that  the (universal) $q$-Onsager algebra $\tUi$ is a $\Q(v)$-algebra with unity generated by $B_0,B_1$, $\K_0^{\pm1},\K_1^{\pm1}$, subject to the following (Serre type) relations:
	\begin{align}
		&\K_i\K_i^{-1}=1,  \qquad \K_i \text{ are central}, \qquad i=0,1;
		\\
		&\sum_{r=0}^3 (-1)^r \qbinom{3}{r}B_i^{3-r} B_j B_i^{r}= - v^{-1} [2]^2 (B_iB_j-B_jB_i) \K_i, \quad \text{ if } i\neq j.
	\end{align}
	(In \cite{LW22}, $\widetilde{k}_i$ are used in place of $\K_i$, and they are related by $\K_i=-v^2\widetilde{k}_i$, for $i=0,1$. The $\K_i$ are directly related to $\imath$Hall algebra.) The $q$-Onsager algebra $\tUi$ is the $\imath$quantum group of split affine $A_1$ type, a special case of $\imath$quantum groups in \cite{LW20, LW21b}.
	
	Let $\Z\alpha_0\oplus\Z\alpha_1$ be the root lattice of affine $\mathfrak{sl}_2$. Let $\delta:=\alpha_0+\alpha_1$.
	For any $\beta=a_0\alpha_0+a_1\alpha_1$, define
	$
	\tK_\beta=\K_0^{a_0}\K_1^{a_1}.
	$ 
	In particular, we have
	\[
	\K_\delta =\K_0 \K_1.
	\]
	($\K_\de$ will often be denoted by $C$ later on.)

	Let $\dag$ be the involution of the $\Q(v)$-algebra $\tUi$ such that
	\begin{align}
		\dag:B_0\leftrightarrow B_1, \quad \K_0\leftrightarrow \K_1.
	\end{align}
	We have the following two automorphisms $\TT_0,\TT_1$ \cite{LW21b}, which admit an interpretation in $\imath$Hall algebras (see \cite{LW21a} and its forthcoming sequel):
	\begin{align}
		\TT_1 (\K_1) &=\K_1^{-1},\qquad \TT_1(\K_0)= \K_{\delta} \K_1,
		\\
		\TT_1(B_1)&=  \K_1^{-1} B_1,
		\\
		\TT_1(B_0)&=  [2]^{-1} \big(B_0B_1^{2} -v[2] B_1 B_0B_1 +v^2 B_1^{2} B_0 \big) + B_0\K_1,
		\label{T1B0}
		\\
		\TT_1^{-1}(B_0)&=  [2]^{-1} \big( B_1^{2}B_0-v[2] B_1B_0B_1 +v^2 B_0B_1^{2} \big) +B_0\K_1.
		\label{T1B0-2}
	\end{align}
	%
	%
	The action of $\TT_0$ is obtained from the above formulas by switching indices $0,1$, that is,
	\begin{align}
		\TT_0=\dag \circ \TT_1 \circ \dag.
	\end{align}
	
	For any $r, m\in\Z$, modifying \cite{BK20} as in \cite{LW21b}, we define
	\begin{align}
		B_{1,r} &=(\dag \TT_1)^{-r}(B_1),
		\label{eq:B1n} \\
		\acute{\Theta}_{m} &=
		\begin{cases}
			-B_{1,m-1} B_0+v^{2} B_0B_{1,m-1} + (v^{2}-1)\sum_{p=0}^{m-2} B_{1,p} B_{1,m-p-2} \K_0, &\text{ if }m>0,
			\\
			\frac{1}{v-v^{-1}}, & \text{ if }m=0,
			\\
			0,& \text{ if }m<0.
		\end{cases}
		\label{eq:dB1}
	\end{align}
	Note that $B_{1,0}=B_1$ by definition. 

	For any $m\in\Z$, we define $\Theta_{m}$ recursively such that
	(see \cite{LW21b})
	\begin{align}
		\Theta_m=\begin{cases}
			v^{-2} \K_{\de}\Theta_{m-2}  +\acute{\Theta}_{m}- \K_{\de}\acute{\Theta}_{m-2},   & \text{ if }m>0,
			\\
			\frac{1}{v-v^{-1}}, & \text{ if }m=0,
			\\
			0,&\text{ if }m<0.
		\end{cases}
	\end{align}
	Note that $\Theta_1=\acute{\Theta}_1$, and $\Theta_{2}=\acute{\Theta}_{2}-v^{-1}\K_{\delta}$.
	As emphasized {\em loc. cit.}, the definition of $\Theta_m$ is motivated by the study of $\imath$Hall algebra of coherent sheaves of $\PL$ in this paper.
	
	
	%

	%
	%
	\subsection{A Drinfeld type presentation of $\tUi$}
	
	\begin{definition} [\cite{LW21b}]
		Let $\tUiD$ be the $\Q(v)$-algebra  generated by $\K_1^{\pm1}$, $C^{\pm1}$, $H_{m}$ and $\y_{1,r}$, where $m\geq1$, $r\in\Z$, subject to the following relations, for $r,s\in \Z$ and $m,n\ge 1$:
		\begin{align}
			\K_1\K_1^{-1}=1, C C^{-1}& =1, \quad \K_1, C \text{ are central, }
			\\
			[H_m,H_n] &=0,  \label{iDR1}
			\\
			[H_m, \y_{1,r}] &=\frac{[2m]}{m} \y_{1,r+m}-\frac{[2m]}{m} \y_{1,r-m}C^m,
			\label{iDR2}
			\\
			\label{iDR3}
			[\y_{1,r}, \y_{1,s+1}]_{v^{-2}}  -v^{-2} [\y_{1,r+1}, \y_{1,s}]_{v^{2}}
			&= v^{-2}\Theta_{s-r+1} C^r \K_1-v^{-4} \Theta_{s-r-1} C^{r+1} \K_1 \\\notag
			&\quad +v^{-2}\Theta_{r-s+1} C^s \K_1-v^{-4} \Theta_{r-s-1} C^{s+1} \K_1.\notag
		\end{align}
		Here
		\begin{align}
			\label{eq:exp1}
			1+ \sum_{m\geq 1} (v-v^{-1})\Theta_{m} z^m  = \exp\Big( (v-v^{-1}) \sum_{m\ge 1}  H_m z^m \Big).
		\end{align}
	\end{definition}
	The relations \eqref{iDR1}--\eqref{iDR2} in the presentation $\tUiD$ can be replaced by \eqref{eq:hh1}--\eqref{eq:hB1} below.
	
	\begin{lemma}
		[\cite{LW21b}]
		\label{lem:equiv}
		\quad
		\begin{enumerate}
			\item
			The relation~\eqref{iDR1} $(\text{for } m, n\geq 1)$  is equivalent to
			\begin{align}
				\label{eq:hh1}
				[\Theta_{m},\Theta_n] = 0 \quad (m, n\geq 1).
			\end{align}
			\item
			The relation \eqref{iDR2} $(\text{for } r\in \Z, m\geq 1)$  is equivalent to
			\begin{align}
				\label{eq:hB1}
				&[\Theta_{m}, \y_{1,r}]+[\Theta_{m-2},\y_{1,r}]C\\
				&=v^{2}[\Theta_{m-1},\y_{1,r+1}]_{v^{-4}}+v^{-2}[\Theta_{m-1},\y_{1,r-1}]_{v^{4}}C \quad (r\in\Z, m\geq 1).
				\notag
			\end{align}
		\end{enumerate}
	\end{lemma}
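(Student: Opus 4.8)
The plan is to encode both relations into generating functions and exploit the defining exponential relation \eqref{eq:exp1}. Set
\[
\mathbf{B}_1(w)=\sum_{r\in\Z}\y_{1,r}\,w^{-r},\qquad H(z)=\sum_{m\ge 1}H_m z^m,\qquad \Theta(z)=1+\sum_{m\ge 1}(v-v^{-1})\Theta_m z^m ,
\]
so that \eqref{eq:exp1} reads $\Theta(z)=\exp\!\big((v-v^{-1})H(z)\big)$; since $H(z)$ has $z$-order $\ge 1$, this exponential and its logarithm are well defined coefficientwise. For part (1): expanding $\exp$ shows that the coefficient of $z^m$ in $\Theta(z)$ equals $(v-v^{-1})H_m$ plus a noncommutative polynomial in $H_1,\dots,H_{m-1}$, and conversely, expanding $\log$ in $(v-v^{-1})H(z)=\log\Theta(z)$ shows that the coefficient of $z^m$ there equals $(v-v^{-1})\Theta_m$ plus a noncommutative polynomial in $\Theta_1,\dots,\Theta_{m-1}$. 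Hence, by induction on $k$, the subalgebra generated by $H_1,\dots,H_k$ coincides with the one generated by $\Theta_1,\dots,\Theta_k$. Therefore the $H_m$ $(m\ge1)$ commute pairwise if and only if this subalgebra is commutative, if and only if the $\Theta_m$ $(m\ge1)$ commute pairwise, which is the equivalence \eqref{iDR1}$\Leftrightarrow$\eqref{eq:hh1}.

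For part (2): multiplying \eqref{iDR2} by $z^m w^{-r}$ and summing over $r\in\Z$ and $m\ge1$ (using that $C$ is central) shows that \eqref{iDR2} is equivalent to the single identity
\[
[H(z),\mathbf{B}_1(w)]=\big(g(wz)-g(Czw^{-1})\big)\,\mathbf{B}_1(w),\qquad g(x):=\sum_{m\ge1}\tfrac{[2m]}{m}x^m .
\]
Since the scalar series $g(wz)-g(Czw^{-1})$ commutes with everything, the operator $\mathrm{ad}\!\big((v-v^{-1})H(z)\big)$ acts on $\mathbf{B}_1(w)$ by multiplication by this central series, so exponentiating it — using the elementary identity $(v-v^{-1})g(x)=\log\frac{1-v^{-2}x}{1-v^2x}$ — converts the displayed relation into the equivalent identity, call it $(\star)$,
\[
(1-v^2 wz)(1-v^{-2}Czw^{-1})\,\Theta(z)\mathbf{B}_1(w)=(1-v^{-2}wz)(1-v^2 Czw^{-1})\,\mathbf{B}_1(w)\Theta(z).
\]
Finally, extracting the coefficient of $z^m w^{-r}$ from $(\star)$ — writing $\Theta(z)=(v-v^{-1})\sum_{m\ge0}\Theta_m z^m$ and using $\Theta_m=0$ for $m<0$ — yields precisely \eqref{eq:hB1} for every $m\ge1$, $r\in\Z$, while the coefficients of $z^m$ with $m\le 0$ hold automatically because $\Theta_0=\tfrac1{v-v^{-1}}$ is central. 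Hence \eqref{iDR2}$\Leftrightarrow(\star)\Leftrightarrow$\eqref{eq:hB1}.

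The one delicate step is the equivalence \eqref{iDR2}$\Leftrightarrow(\star)$, since $H_m$ and $\Theta_m$ are tied together by the nonlinear map $\exp$. The direction \eqref{iDR2}$\Rightarrow(\star)$ is just the identity $e^{A}Be^{-A}=e^{c}B$ whenever $[A,B]=cB$ with $c$ central, applied coefficientwise. For the converse I would induct on the $z$-degree $m$: granting \eqref{iDR2} in all degrees $<m$, compare the degree-$m$ components of $e^{\mathrm{ad}((v-v^{-1})H(z))}(\mathbf{B}_1(w))$ and $R(z,w)\mathbf{B}_1(w)$, where $R(z,w)=\frac{(1-v^{-2}wz)(1-v^2 Czw^{-1})}{(1-v^2 wz)(1-v^{-2}Czw^{-1})}$; all nested-commutator contributions of order $\ge 2$ in degree $\le m$ involve only the $H_j$ with $j<m$ and are therefore already determined, so the comparison pins down $[H_m,\mathbf{B}_1(w)]$ uniquely, and by the forward direction it must equal the value prescribed by \eqref{iDR2}. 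The remaining work is bookkeeping: matching the $v^{\pm2},v^{\pm4}$ weights and the central $C$ when rewriting $(\star)$ in the bracket form \eqref{eq:hB1}, and checking the low-degree cases, where $\Theta_1=H_1$ and $\Theta_2=\acute{\Theta}_2-v^{-1}\K_{\delta}$ enter.
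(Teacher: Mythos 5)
Your argument is correct, and it is essentially the standard one: the paper itself gives no proof of this lemma but defers to \cite{LW21b}, where the equivalences are established by exactly this kind of generating-function manipulation using \eqref{eq:exp1} (the mutual expressibility of $H_m$ and $\Theta_m$ for part (1), and the exponentiated functional equation $\Theta(z)\mathbf{B}_1(w)\Theta(z)^{-1}=R(z,w)\mathbf{B}_1(w)$ for part (2)). You correctly identify and handle the one genuinely delicate step, namely recovering \eqref{iDR2} from the exponentiated identity $(\star)$; your induction on the $z$-degree works, and it can be streamlined by noting that $(\star)$ iterates to $\Theta(z)^{j}\mathbf{B}_1(w)\Theta(z)^{-j}=R(z,w)^{j}\mathbf{B}_1(w)$ since $R$ is central, so that applying the formal logarithm of the operator $\exp(\operatorname{ad}((v-v^{-1})H(z)))$ to $\mathbf{B}_1(w)$ directly returns $\log(R(z,w))\,\mathbf{B}_1(w)$.
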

	
	\begin{theorem}
		[\cite{LW21b}]
		\label{thm:UUiso}
		Let $\tUi$ be the $q$-Onsager algebra. Then there is an isomorphism of $\Q(v)$-algebras
		$ \tUiD \rightarrow\tUi$
		such that
		\[
		B_{1,r}\mapsto B_{1,r}, \quad
		\Theta_m \mapsto \Theta_m, \quad
		\K_1 \mapsto \K_1,  \quad
		C\mapsto \K_\de
		\quad (r \in \Z, m \ge 1).
		\]
	\end{theorem}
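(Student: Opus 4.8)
The plan is to construct an explicit $\Q(v)$-algebra homomorphism $\Phi\colon \tUiD \to \tUi$ realizing the stated assignment, check it is well defined, show it is surjective by a quick generator count, and then establish injectivity, which is the substantive part. Well-definedness means that the elements $B_{1,r}=(\dag\TT_1)^{-r}(B_1)$, the recursively defined $\Theta_m$ (and hence the $H_m$ produced from them via \eqref{eq:exp1}), and $\K_1^{\pm1},\K_\de^{\pm1}$ of $\tUi$ satisfy the defining relations of $\tUiD$. Centrality of $\K_1$ and $\K_\de$ is immediate, and by Lemma~\ref{lem:equiv} the remaining relations reduce to verifying \eqref{eq:hh1}, \eqref{eq:hB1} and \eqref{iDR3} inside $\tUi$. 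I would organize these verifications around the automorphism $\dag\TT_1$ of $\tUi$, which by construction sends $B_{1,r}\mapsto B_{1,r-1}$, fixes $\K_\de$, and acts on $\Theta_m$ and $\K_1$ in a way computable from \eqref{eq:dB1} and \eqref{T1B0}--\eqref{T1B0-2}. Since each of \eqref{eq:hh1}, \eqref{eq:hB1}, \eqref{iDR3} is a family parametrized by $r,s,m$ that is compatible with this symmetry, it suffices to establish finitely many seed identities with small indices; these reduce, via the formulas for $\TT_1$, to identities among $B_0,B_1,\K_0,\K_1$ that follow from the single Serre relation of $\tUi$ by direct, if lengthy, computation. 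This is the $\imath$-analogue of the Beck--Damiani verification of Drinfeld-type relations in affine quantum groups.

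Surjectivity of $\Phi$ is then immediate: $\Phi(B_{1,0})=B_1$, $\Phi(\K_1)=\K_1$ and $\Phi(C)=\K_\de=\K_0\K_1$, so that $\K_0=\Phi(C\K_1^{-1})$; moreover $B_{1,-1}=\dag\TT_1(B_1)=\dag(\K_1^{-1}B_1)=\K_0^{-1}B_0$, whence $B_0=\Phi(C\K_1^{-1}B_{1,-1})$. Thus the image of $\Phi$ contains the generators $B_0,B_1,\K_0^{\pm1},\K_1^{\pm1}$ of $\tUi$, so $\Phi$ is onto. This also pins down the candidate inverse $\Psi$ on generators: $B_1\mapsto B_{1,0}$, $\K_1\mapsto\K_1$, $\K_0\mapsto C\K_1^{-1}$, $B_0\mapsto C\K_1^{-1}B_{1,-1}$.

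The main obstacle is injectivity, equivalently that $\tUiD$ is no larger than $\tUi$. The route I would take is a PBW-type analysis of $\tUiD$: one shows that the ordered monomials in the $B_{1,r}$ ($r\in\Z$, arranged by index), followed by the $H_m$ ($m\ge1$), followed by $\K_1^{\pm1}$ and $C^{\pm1}$, span $\tUiD$. Here the defining relations serve exactly as straightening rules --- \eqref{iDR2} pushes an $H_m$ past a $B_{1,r}$ modulo other $B$'s, \eqref{iDR3} reorders two adjacent $B$'s modulo $\Theta$- and central corrections, and \eqref{iDR1} (equivalently \eqref{eq:hh1}) commutes the $H_m$'s --- and a bookkeeping induction on a suitable total degree shows every element is a combination of such monomials. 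One then checks that $\Phi$ carries this spanning set onto a $\Q(v)$-linearly independent subset of $\tUi$, by matching it against an independently available PBW basis of the $q$-Onsager algebra (for instance the one coming from the quantum symmetric pair structure, or from $\imath$divided powers) and observing that $\Phi$ intertwines the two families up to a triangular unipotent change of variables. Linear independence of the image of a spanning set forces $\Phi$ to be injective, and together with surjectivity this yields the isomorphism $\tUiD \xrightarrow{\ \sim\ } \tUi$ and identifies $\Psi=\Phi^{-1}$.
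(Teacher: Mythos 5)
First, note that this paper does not prove Theorem~\ref{thm:UUiso} at all: it is quoted from \cite{LW21b}, so there is no in-paper argument to measure your proposal against. Judged as a reconstruction of the proof in the cited reference, your overall architecture is the right one and matches what is done there: well-definedness of $\Phi$ reduced via Lemma~\ref{lem:equiv} to checking \eqref{eq:hh1}, \eqref{eq:hB1} and \eqref{iDR3} inside $\tUi$, exploitation of the shift automorphism $\dag\TT_1$, surjectivity from $\K_0=\Phi(C\K_1^{-1})$ and $B_0=\K_0B_{1,-1}=\Phi(C\K_1^{-1}\y_{1,-1})$ (these identities are correct and are the same ones used later in the proof of Theorem~\ref{main thm2}), and injectivity via a spanning-set/PBW comparison.

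Two steps are nevertheless gaps as written. (1) The claim that the $\dag\TT_1$-symmetry reduces the verification to ``finitely many seed identities with small indices'' is false. The automorphism $\dag\TT_1$ shifts the index $r$ of $B_{1,r}$ and fixes $C$ and, once one proves it (itself a nontrivial computation), each $\Theta_m$; so it collapses only the translation parameter. What remains is still infinite: all $m,n\ge 1$ in \eqref{eq:hh1}, all $m\ge 1$ in \eqref{eq:hB1}, and all values of $s-r$ in \eqref{iDR3}. These families cannot be generated from finitely many instances by the symmetry; the actual verification requires an induction on $m$ (resp.\ on $s-r$), driven by the recursive definition of $\Theta_m$ through $\acute{\Theta}_m$ in \eqref{eq:dB1} and by the relations already established at lower index. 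Without that induction your ``direct, if lengthy, computation'' never terminates. (2) Injectivity is the substantive point, and your sketch defers it to ``matching against an independently available PBW basis \dots\ up to a triangular unipotent change of variables.'' To make this a proof you must both carry out the straightening induction showing the ordered monomials span $\tUiD$ (plausible as outlined) \emph{and} supply the nontrivial external input that $\tUi$ possesses a PBW basis indexed by the affine root system in which $B_{1,r}$ and $\Theta_m$ serve as real and imaginary root vectors, together with the triangularity of $\Phi$ on monomials. As stated, this step asserts the conclusion rather than establishing it; it is precisely where the weight of the argument in \cite{LW21b} lies.
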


	\section{$\imath$Hall algebra and $q$-Onsager algebra}
	\label{sec:HallOn}
	
	In this section, we establish a homomorphism from the $q$-Onsager algebra in its Drinfeld type presentation to the $\imath$Hall algebra of the projective line.

	\subsection{The homomorphism $\Omega$ }
	
	We shall use a shorthand notation $\tMHX$ to denote the $\imath$Hall algebra $\iH (\coh(\P^1_\bfk))$, cf. Definition~\ref{def:iH}.
	Recalling $S_\bn \in \tor(\PL)$ from \eqref{eq:Sbn}, we introduce the following elements in $\tMHX$:
	\begin{align}
		\label{def:Theta}
		\haT_{m}= \frac{1}{(q-1)\sqq^{m-1}} \sum\limits_{||\bn||=m}[S_\bn],
		\qquad \text{ for } m\ge 1.
	\end{align}
	We also set
	\[\haT_0=\frac{1}{\sqq-\sqq^{-1}},\qquad \haT_m=0, \,\forall m<0.\]
	%

	Here is another description of $\haT_m$.
	\begin{lemma}
		For $s\in \Z$ and $m\ge 1$, we have
		\begin{align}  \label{eq:hTm}
			\haT_{m}= \frac{1}{(q-1)^2\sqq^{m-1}}\sum_{0\neq f:\co(s)\rightarrow \co(m+s) } [\coker f].
		\end{align}
	\end{lemma}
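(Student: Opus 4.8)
The plan is to reduce the claimed identity \eqref{eq:hTm} to a purely combinatorial count inside $\coh(\PL)$, bypassing the Hall multiplication entirely. Multiplying \eqref{eq:hTm} through by $(q-1)^2\sqq^{m-1}$ and invoking the definition \eqref{def:Theta} of $\haT_m$, it suffices to prove
\[
\sum_{0\neq f\colon \co(s)\to\co(m+s)} [\coker f] \;=\; (q-1)\sum_{||\bn||=m}[S_\bn]
\]
in $\tMHX$, for every $s\in\Z$ and $m\ge 1$; in particular the left-hand side will turn out not to depend on $s$. Here $[\coker f]$ is the class of the stalk complex on the coherent sheaf $\coker f$, and the identity is just a linear relation among $\imath$Hall basis elements, so no multiplication in $\tMHX$ is involved.

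First I would set up the dictionary between morphisms and forms. By \eqref{eq:OmOn} we have $\Hom(\co(s),\co(m+s))\cong\bS_m$, the space of homogeneous polynomials of degree $m$ in $\bfk[X_0,X_1]$, a form $g$ acting by "multiplication by $g$". Since $\bS$ is a domain, every nonzero such $f$ is injective, so $\coker f$ is a torsion sheaf; applying the rank and degree functions on $K_0(\PL)$ to $0\to\co(s)\to\co(m+s)\to\coker f\to 0$ gives $\rk(\coker f)=0$ and $\deg(\coker f)=(m+s)-s=m$, hence $\coker f\cong S_\bn$ for a unique $\bn$ with $||\bn||=m$ (cf. \eqref{eq:Sbn}).

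The core step identifies this $\bn$ explicitly: writing $g=c\prod_x f_x^{\bn_x}$ with $c\in\bfk^\times$ and $f_x$ the monic irreducible of degree $d_x$ cutting out the closed point $x$ (so $\sum_x d_x\bn_x=m$, i.e. $||\bn||=m$), I claim $\coker f\cong S_\bn=\bigoplus_x S_x^{(\bn_x)}$. One way: localize at each closed point $y\in\PL$ — the other factors of $g$ become units in $\co_y$, so the stalk of $\coker f$ at $y$ is $\co_y/(f_y^{\bn_y})$, the stalk of $S_y^{(\bn_y)}$; as a torsion sheaf is recovered from its (finitely many nonzero) stalks, the claim follows. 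Alternatively one induces on the number of distinct points in the support, using the defining short exact sequence $0\to\co(-nd_x)\xrightarrow{f_x^{n}}\co\to S_x^{(n)}\to 0$ (twisted by a line bundle, which is exact) for the base case, and the snake lemma applied to composable monomorphisms $\co(s)\xrightarrow{h}\co(a)\xrightarrow{g'}\co(m+s)$ with $g=g'h$, which yields $0\to\coker h\to\coker f\to\coker g'\to 0$ whose outer terms are supported on disjoint sets of points and hence split off by the block orthogonality $\tor(\PL)=\bigoplus_x\tor_x(\PL)$.

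Finally I would count fibers. By unique factorization in $\bfk[X_0,X_1]$, the assignment $g\mapsto\bn$ just constructed is a surjection from $\{0\neq g\in\bS_m\}$ onto $\{\bn\colon ||\bn||=m\}$ whose fiber over a given $\bn$ consists precisely of the $q-1$ scalar multiples $c\prod_x f_x^{\bn_x}$, $c\in\bfk^\times=\aut(\co(s))$. Since $[\coker f]=[S_{\bn(f)}]$ in $\tMHX$, grouping the sum over all nonzero $f$ by the value of $\bn(f)$ gives $\sum_{0\neq f}[\coker f]=(q-1)\sum_{||\bn||=m}[S_\bn]$, which is exactly the needed identity. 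The only genuinely nontrivial point is the cokernel identification $\coker f\cong S_\bn$ in the core step; the rest is bookkeeping, and as a sanity check the case $m=1$ recovers $(q-1)(q+1)=q^2-1=\dim_\bfk\bS_1$ nonzero forms matching the $q+1$ degree-one points.
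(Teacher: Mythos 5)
Your argument is correct and follows essentially the same route as the paper: identify nonzero morphisms $\co(s)\to\co(m+s)$ with nonzero forms in $\bS_m$, read off $\coker f\cong S_{\bn}$ from the irreducible factorization, and observe that each $\bn$ with $||\bn||=m$ arises from exactly the $q-1$ scalar multiples of one form. The only difference is that you supply the localization/snake-lemma justification for the cokernel identification, which the paper simply asserts.
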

	
	\begin{proof}
		Without loss of generality, we shall only prove the case for $s=0$.
		Any nonzero morphism $f:\co\rightarrow \co(m)$ is given by a homogeneous polynomial in $\bS$, which can be decomposed as a product of irreducible polynomials $f=\prod\limits_{x\in \PL} f_x^{\bm_x}$, for some $\bm\in\N (\PL)$ with $||\bm ||=m$, where $f_x$ denotes the irreducible polynomial in $\bS$ corresponding to the closed point $x$. Hence $\coker f\cong S_{\bm}$. Moreover, for any morphism $g:\co\rightarrow \co(m)$, we have $\coker f\cong \coker g$ if and only if $g=\mu f$ for some nonzero $\mu\in\bfk$. Therefore,
		\[
		\sum_{0\neq f:\co\rightarrow \co(m) } [\coker f]=(q-1) \sum_{||\bm||=m} [ S_{\bm}].
		\]
		This proves the lemma.
	\end{proof}
	
	The goal of this section is to prove the following theorem.
	\begin{theorem}
		\label{thm:morphi}
		There exists a $\Q(\sqq)$-algebra homomorphism
		\begin{align}
			\label{eq:phi}
			\Omega: \tUiD_{|_{v=\sqq}} \longrightarrow \tMHX
		\end{align}
		which sends, for all $r\in \Z$ and $m \ge 1$,
		\begin{align*}
			\K_1\mapsto [K_\co],  \quad
			C\mapsto [K_\de],  \quad
			B_{1,r} \mapsto -\frac{1}{q-1}[\co(r)],  \quad
			\Theta_{m}\mapsto  \haT_m.
		\end{align*}
	\end{theorem}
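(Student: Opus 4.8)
The plan is to verify that the assignment $\Omega$ respects all defining relations of $\tUiD_{|_{v=\sqq}}$, i.e.\ the relations \eqref{iDR3}, together with \eqref{eq:hh1} and \eqref{eq:hB1} (which by Lemma~\ref{lem:equiv} replace \eqref{iDR1}--\eqref{iDR2}), and the centrality relations for $\K_1$ and $C$. The centrality of $[K_\co]$ and $[K_\de]$ in $\tMHX$ is immediate from the discussion after Definition~\ref{def:iH}: all $[K_\alpha]$ with $\alpha \in K_0(\coh(\PL))$ are central. Since $\Omega$ is defined on generators and $\tUiD_{|_{v=\sqq}}$ is generated (as a $\Q(\sqq)$-algebra) by $\K_1^{\pm 1}, C^{\pm 1}, \Theta_m$ and $B_{1,r}$, it suffices to check these relations; the well-definedness of $\Omega$ then follows by the universal property of the presentation.

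First I would establish the counterpart of \eqref{eq:hh1}, namely $[\haT_m,\haT_n]=0$ in $\tMHX$. As indicated in the introduction, this reduces to the commutativity of the $\imath$Hall algebra of the Jordan quiver: the category $\tor(\PL) = \bigoplus_{x\in\PL}\tor_x(\PL)$ decomposes orthogonally, each block $\tor_x(\PL)$ is equivalent to $\rep^{\rm nil}_{\bfk_x}(\QJ)$, and $\haT_m$ is built entirely out of stalk complexes of torsion sheaves $[S_\bn]$. Thus one invokes (the $\imath$Hall algebra incarnation of) the classical fact that the Hall algebra of the Jordan quiver is commutative, distributed across the orthogonal blocks, to get $[S_\bn]*[S_{\bn'}] = [S_{\bn'}]*[S_\bn]$ up to the symmetric Euler-form twist, hence $[\haT_m,\haT_m']=0$; here one should be careful that the quantum torus factors $[K_\alpha]$ that enter via Lemma~\ref{lem:Cf} are central and cause no obstruction.

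Next I would verify \eqref{eq:hB1}, the mixed relation between $\haT_m$ and $[\co(r)]$, and then \eqref{iDR3}, the relation among the real root vectors $[\co(r)]$. Both are proved by direct computation in $\tMHX$ using: the multiplication formula \eqref{eq:tH}, the Euler form \eqref{eq:Euler}, Lemma~\ref{lem:Cf} to rewrite $[C_f]$-type products in terms of $[\ker f\oplus\coker f]*[K_{\Im f}]$, the description \eqref{eq:hTm} of $\haT_m$ via cokernels of maps $\co(s)\to\co(m+s)$, the homomorphism spaces \eqref{eq:OmOn} and \eqref{hom:SS}, the extension structure \eqref{ext:SS} of $S_x^{(n)}$, and the Riedtmann--Peng/Riedtmann formula \eqref{eq:RP} to convert Hall numbers into extension counts. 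The strategy is to compute both sides of each relation as explicit $\Q(\sqq)$-linear combinations of basis elements $[S_\bn]*[K_\alpha]$ (using the basis in Proposition~\ref{prop:hallbasis}) and match coefficients; the $\Theta$-terms on the right of \eqref{iDR3} and \eqref{eq:hB1} are reproduced by the torsion-sheaf sums defining $\haT_m$, while the $C^r\K_1 = [K_\de]^r[K_\co]$ prefactors emerge from tracking $\Im f$ in the quantum-torus part.

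The main obstacle, as the authors flag, is precisely the bookkeeping of the $[K_\de]$ (i.e.\ $C$) contributions in \eqref{eq:hB1} and \eqref{iDR3}: unlike the classical Hall-algebra identities for $\PL$ in \cite{Ka97, BKa01, Sch12}, these relations carry terms involving $\K_\de$ that have no analogue there, and pinning them down requires genuinely new homological input — one must compute the class $\widehat{\Im f}$ for the relevant morphisms and, more delicately, account for the acyclic-complex contributions (the $K_\alpha$ factors) arising when one moves between $[C_f]$ and $[H(C_f)]$ via Lemma~\ref{lem:Cf}, as well as the $\frac12$-twist from Lemma~\ref{lemma compatible of Euler form}(2). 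Handling the degenerate/boundary cases $m=0,1,2$ of $\Theta_m$ (where $\Theta_0 = \frac{1}{v-v^{-1}}$ and $\Theta_2=\acute\Theta_2 - v^{-1}\K_\de$) correctly inside these identities is where the computation is most error-prone, and I expect the bulk of the work — deferred here to the subsequent sections — to lie in that verification.
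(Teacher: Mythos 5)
Your proposal matches the paper's proof essentially step for step: the authors reduce well-definedness to verifying \eqref{iDR3}, \eqref{eq:hh1} and \eqref{eq:hB1} via Lemma~\ref{lem:equiv}, prove \eqref{eq:hh1} through the commutativity of the $\imath$Hall algebra of the Jordan quiver together with the orthogonality of the blocks $\tor_x(\PL)$, and prove \eqref{eq:hB1} and \eqref{iDR3} by the same direct computations of $\haT_m*[\co(r)]$, $[\co(r)]*\haT_m$ and the products $[\co(r)]*[\co(s)]$ that you outline, with the Riedtmann--Peng formula and Lemma~\ref{lem:Cf} tracking the $[K_\de]$ contributions. The only caveat worth noting is that the commutativity of the Jordan $\imath$Hall algebra is not a direct import of the classical fact (the twisted Hall algebra of $\calc_1(\rep^{\rm nil}_\bfk(\QJ))$ is \emph{not} commutative); the paper proves it via the duality functor $D$ and a comparison of classes in the semi-derived quotient, but your hedge about working in the ``$\imath$Hall algebra incarnation'' points in the right direction.
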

	We shall verify that Relations~\eqref{iDR3}, \eqref{eq:hh1} and \eqref{eq:hB1} are preserved by $\Omega$, thanks to Lemma~\ref{lem:equiv}.
	(Later in Theorem~\ref{main thm2}, we shall strengthen Theorem ~\ref{thm:morphi} by showing that $\Omega$ is injective.)

	\subsection{Relation \eqref{iDR3}}
	
	The relation \eqref{iDR3} in $\tMHX$ is formulated as identities \eqref{HallRR}--\eqref{HallRR1} among $\co(r)$ in Proposition~\ref{prop:OO} below. To that end, we shall first compute the product among $\co(r)$ for various $r$.
	
	\begin{lemma}
		For $r\in\Z$ and $m\in\N$, the following identities hold in $\tMHX$:
		\begin{align}
			[\co(r)]*[\co(r)] 
			&=\sqq^{-1}[\co(r)\oplus \co(r)] +\sqq^{-1}(q-1)[K_{\co(r)}], \label{rr1}
			\\
			[\co(r)]*[\co(r+m+1)]
			&= \sqq^{-(m+2)} [\co(r)\oplus \co(r+m+1)] + \frac{(q-1)^2}{\sqq^2} \haT_{m+1}*[K_{\co(r)}],\label{rr2}
			\\
			[\co(r+m+1)]*[\co(r)]
			&= \sqq^{-m} [\co(r)\oplus \co(r+m+1)]
			\label{rr3}\\
			+ \sum_{1\leq a\leq \lfloor\frac{m+1}{2}\rfloor; a\neq \frac{m+1}{2}}(\sqq^4 -&1 )  \sqq^{4a-4-m}  [\co(r+a)\oplus \co(r+m+1-a)]
			\notag
			\\
			+ \delta_{m, odd}(\sqq^2-1)\sqq^{m-2}  \Big[&\co(r+\frac{m+1}{2})\oplus \co(r+\frac{m+1}{2}) \Big].
			\notag
		\end{align}
	\end{lemma}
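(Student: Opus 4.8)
The plan is to compute each of the three products directly in the $\imath$Hall algebra $\tMHX$ by unwinding the twisted multiplication \eqref{eq:tH} and then applying Lemma~\ref{lem:Cf}. For a pair of line bundles the key fact is that $\Ext^1(\co(i),\co(j)) = 0$ always (since $\coh(\PL)$ is hereditary and line bundles have no self-extensions or extensions among themselves by Serre duality plus \eqref{eq:OmOn}), so that $\Ext^1_{\calc_1(\ca)}(C_{\co(i)}, C_{\co(j)}) \cong \Hom(\co(i),\co(j))$ by Lemma~\ref{lem:Ext=Ext+Hom}. Thus in $\ch(\calc_1(\ca))$ the product $[\co(i)]\diamond[\co(j)]$ is a sum over the middle terms $L^\bullet$ of extensions of $C_{\co(i)}$ by $C_{\co(j)}$ in $\calc_1(\ca)$, and such an extension is precisely a complex of the form $C_f$ with $f \in \Hom(\co(j),\co(i))$ up to the relevant normalization. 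Concretely I expect $[\co(i)]\diamond[\co(j)] = \sum_{f}\frac{1}{|\Hom(\co(i),\co(j))|}[C_f]$ where $f$ ranges over $\Hom(\co(j),\co(i))$ (or its dual, depending on which Ext group parametrizes which side), and then $[C_f] = [\ker f \oplus \coker f]*[K_{\Im f}]$ by Lemma~\ref{lem:Cf}.

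For \eqref{rr1}: $\Hom(\co(r),\co(r)) = \bfk$, so the only maps are $\mu\cdot\mathrm{id}$, $\mu \in \bfk$. The map $f=0$ contributes $[C_0] = [\co(r)\oplus\co(r)]$ and the $q-1$ nonzero (invertible) maps each contribute $[C_f]=[K_{\co(r)}]$ since $\ker f = \coker f = 0$ and $\Im f = \co(r)$. Collecting the Euler-form prefactor $\sqq^{\langle \widehat{\co(r)},\widehat{\co(r)}\rangle_\ca} = \sqq^{\rk^2} = \sqq$ combined with $1/|\Hom| = 1/q$ gives the stated $\sqq^{-1}$ coefficients. For \eqref{rr2}: here $\Hom(\co(r),\co(r+m+1))$ has dimension $m+2$ but $\Hom(\co(r+m+1),\co(r)) = 0$, so $\Ext^1_{\calc_1(\ca)}(C_{\co(r)},C_{\co(r+m+1)}) \cong \Hom(\co(r),\co(r+m+1))$; a map $0\neq f:\co(r)\to\co(r+m+1)$ is injective with cokernel $S_\bm$ for $||\bm||=m+1$, so $[C_f] = [S_\bm]*[K_{\co(r)}]$, and summing over all $f$ using \eqref{eq:hTm} produces the $\haT_{m+1}*[K_{\co(r)}]$ term with its $(q-1)^2/\sqq^2$ coefficient after tracking the Euler prefactor $\sqq^{\langle\widehat{\co(r)},\widehat{\co(r+m+1)}\rangle_\ca}$.

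The main obstacle will be \eqref{rr3}, where the roles of Hom and Ext are swapped: $\Hom(\co(r+m+1),\co(r)) = 0$ forces all the ``interesting'' content into $\Ext^1_{\calc_1(\ca)}(C_{\co(r+m+1)},C_{\co(r)}) \cong \Hom(\co(r+m+1),\co(r+m+1))^{\vee}\oplus\cdots$ — no, more precisely into the $\Hom(\co(r+m+1),\co(r))$ summand which vanishes, so one must instead read the product from the other convolution and the nonzero extensions of $C_{\co(r+m+1)}$ by $C_{\co(r)}$ are of the form $C_f$ with $f: \co(r)\to\co(r+m+1)$ again, but now the grading/twist is different and one must enumerate the isomorphism classes of $\coker f \oplus \ker f$. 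Since $f$ nonzero is injective with $\coker f = S_\bm$, $||\bm|| = m+1$, but now one needs the finer information: which $C_f$ arise and with what multiplicity, and crucially one must decompose $[S_\bm]*[K_{\co(r)}]$ back in terms of line bundles $[\co(r+a)\oplus\co(r+m+1-a)]$ via the exact sequences $0\to\co(r)\to\co(r+m+1-a)\oplus(\text{something})$ — that is, one inverts the relation by recognizing that $[\co(r+m+1-a)]\diamond[\co(r+a)]$ and $[\co(r+a)]\diamond[\co(r+m+1-a)]$ differ by torsion-sheaf corrections governed by \eqref{ext:SS}. The combinatorial bookkeeping of the parameter $a$ (with the special self-dual case $a = (m+1)/2$ when $m$ is odd, where $\co(r+a)\oplus\co(r+a)$ appears with a distinct $(\sqq^2-1)$ rather than $(\sqq^4-1)$ coefficient) together with the powers $\sqq^{4a-4-m}$ coming from iterated Euler forms is where the real work lies; I would handle it by induction on $m$, peeling off one copy of $S_x$ at a time and using the hereditary property to control the filtration, or alternatively by a direct count of subsheaves $L\subseteq \co(r+m+1)$ with $L\cong\co(r+a)$ and $\co(r+m+1)/L$ a torsion sheaf, invoking the Riedtmann–Peng formula \eqref{eq:RP} to convert Hall numbers into the extension counts needed.
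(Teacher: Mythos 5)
Your treatment of \eqref{rr1} and \eqref{rr2} is correct and coincides with the paper's: in both cases $\Ext^1_{\coh(\PL)}$ between the relevant line bundles vanishes (by Serre duality, $\Ext^1(\co(r),\co(s))\cong D\Hom(\co(s),\co(r-2))=0$ for $s\ge r$), so by Lemma~\ref{lem:Ext=Ext+Hom} every extension class is a complex $C_f$ with $f\in\Hom(\co(r),\co(s))$, and Lemma~\ref{lem:Cf} together with \eqref{eq:hTm} produces exactly the stated right-hand sides.

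For \eqref{rr3}, however, your reasoning contains a genuine error. You claim that, because $\Hom(\co(r+m+1),\co(r))=0$, the nonsplit extensions of $C_{\co(r+m+1)}$ by $C_{\co(r)}$ must again be complexes $C_f$ with $f:\co(r)\to\co(r+m+1)$, and that one must then re-expand $[S_{\bm}]*[K_{\co(r)}]$ in terms of line bundles. This is backwards. By Lemma~\ref{lem:Ext=Ext+Hom}, $\Ext^1_{\calc_1(\ca)}(C_{\co(r+m+1)},C_{\co(r)})\cong\Ext^1_{\coh(\PL)}(\co(r+m+1),\co(r))\oplus\Hom(\co(r+m+1),\co(r))$; the $\Hom$ summand (which is what produces the $C_f$'s) vanishes, while the \emph{sheaf} $\Ext^1$ summand is nonzero of dimension $m$ (it is dual to $\Hom(\co(r),\co(r+m-1))$). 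The nonsplit classes are therefore stalk complexes of honest sheaf extensions $0\to\co(r)\to E\to\co(r+m+1)\to 0$, whose middle terms are $E\cong\co(r+a)\oplus\co(r+m+1-a)$ for $1\le a\le\lfloor\tfrac{m+1}{2}\rfloor$ — no torsion sheaves and no $K_{\Im f}$ factors occur anywhere in \eqref{rr3}, consistent with the statement you are trying to prove. A complex $C_f$ with $f:\co(r)\to\co(r+m+1)$ is an extension in the \emph{opposite} order (of $C_{\co(r)}$ by $C_{\co(r+m+1)}$) and is irrelevant to this product. Once this is corrected, the entire computation takes place inside $\coh(\PL)$, and the only remaining input is the classical count $\bigl|\Ext^1(\co(n),\co(m))_{\co(m+a)\oplus\co(n-a)}\bigr|=(\sqq^4-1)\sqq^{4a-4}$ for $m+a\neq n-a$, and $(\sqq^2-1)\sqq^{4a-4}$ in the self-dual case $m+a=n-a$ (this is the Baumann--Kassel computation; it can be derived via \eqref{eq:RP} from the count of subsheaves $\co(r)\hookrightarrow\co(r+a)\oplus\co(r+m+1-a)$ with quotient $\co(r+m+1)$). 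Multiplying by the twist $\sqq^{\langle\co(r+m+1),\co(r)\rangle}=\sqq^{-m}$ and dividing by $|\Hom(\co(r+m+1),\co(r))|=1$ yields \eqref{rr3} directly; the induction on $m$ and the "peeling off torsion" you propose are unnecessary and, as described, would not close.
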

	
	\begin{proof}
		Note that any nonzero morphism $f:\co(r)\rightarrow \co(s)$, for $r\leq s$, is injective.
		
		The identity \eqref{rr1} follows since $\Ext^1(\co(r),\co(r))=0$ and $\dim_\bfk\Hom(\co(r),\co(r))=1$.
		
		Note that $\Ext^1(\co(r),\co(r+m+1))=0$, and $\dim_\bfk\Hom(\co(r),\co(r+m+1))=m+2$. From \eqref{eq:OmOn} we have the following formula for the Euler form:
		\begin{align}  \label{eq:formOO}
			\langle \co(r),\co(s)\rangle=s-r+1, \quad \text{ for } r,s \in \Z.
		\end{align}
		Now the identity \eqref{rr2} is obtained by the following computation using \eqref{eq:tH} and \eqref{eq:formOO}:
		\begin{align*}
			&[\co(r)]*[\co(r+m+1)]
			\\
			&=\sqq^{\langle \co(r),\co(r+m+1)\rangle} [\co(r)]\diamond[\co(r+m+1)]
			\\
			&=\sqq^{m+2} [\co(r)]\diamond[\co(r+m+1)]
			\\
			&=
			\sqq^{-(m+2)}[\co(r)\oplus \co(r+m+1)] + \sqq^{-(m+2)}  \sum_{0\neq f: \co(r)\rightarrow \co(r+m+1)} [C_f]
			\\
			&=\sqq^{-(m+2)}[\co(r)\oplus \co(r+m+1)] + \sqq^{-(m+2)} \sum_{0\neq f:\co(r)\rightarrow \co(r+m+1)} [\coker f]*[K_{\co(r)}]
			\\
			&=\sqq^{-(m+2)}[\co(r)\oplus \co(r+m+1)] + \sqq^{-2}(q-1)^2 \haT_{m+1}*[K_{\co(r)}],
		\end{align*}
		where the last equality uses \eqref{eq:hTm}.
		
		The computation for \eqref{rr3} can be performed in the setting of $\coh(\PL)$, thanks to Lemma~ \ref{lem:Ext=Ext+Hom} and $\Hom(\co(r+m+1),\co(r))=0$. Note the following formula, for $m<n$ and $1\leq a\leq \lfloor\frac{n-m}{2}\rfloor$ (cf. \cite{BKa01}):
		\[
		\big|\Ext^1(\co(n),\co(m))_{\co(m+a)\oplus \co(n-a)} \big|=
		\left\{
		\begin{array}{l}
			(\sqq^4-1)\sqq^{4a-4}, \quad \text{ if }m+a\neq n-a\\
			(\sqq^2-1)\sqq^{4a-4}, \quad \text{ if } m+a=n-a.
		\end{array}
		\right.
		\]
		Hence \eqref{rr3} follows since
		\begin{align*}
			[\co(r+m+1)]*[\co(r)]
			&= \sqq^{-m}[\co(r+m+1)]\diamond[\co(r)]
			\\
			&= \sqq^{-m}[\co(r)\oplus \co(r+m+1)]
			\\
			&+ \sqq^{-m}\sum_{1\leq a\leq \lfloor\frac{m+1}{2}\rfloor; a\neq \frac{m+1}{2}} (\sqq^4-1)\sqq^{4a-4} [\co(r+a)\oplus \co(r+m+1-a)]
			\notag
			\\
			&+ \delta_{m, odd}(\sqq^2-1)\sqq^{m-2} \Big[\co(r+\frac{m+1}{2})\oplus \co(r+\frac{m+1}{2}) \Big].
		\end{align*}
		The lemma is proved.
	\end{proof}
	
	\begin{proposition}
		\label{prop:OO}
		For $r\in\Z$ and $m\geq2$, we have
		\begin{align}   \label{HallRR}
			[\co(r),&\co(r+m+1)]_{\sqq^{-2}} -\sqq^{-2}[\co(r+1),\co(r+m)]_{\sqq^{2}}
			\\
			&\qquad\qquad\quad = \sqq^{-2} (q-1)^2 \haT_{m+1}*[K_{\co(r)}]-\sqq^{-4} (q-1)^2\haT_{m-1}* [K_{\co(r+1)}], \notag
			\\
			\label{HallRR1/2}
			[\co(r),&\co(r+2)]_{\sqq^{-2}} -\sqq^{-2}[\co(r+1),\co(r+1)]_{\sqq^{2}}
			\\
			&\qquad\qquad\quad = \sqq^{-2} (q-1)^2 \haT_{2}*[K_{\co(r)}]-\sqq^{-3} (q-1)^2[K_{\co(r+1)}], \notag
			\\
			\label{HallRR1}
			[\co(r),&\co(r+1)]_{\sqq^{-2}}=\sqq^{-2} (q-1)^2 \haT_{1}*[K_{\co(r)}].
		\end{align}
	\end{proposition}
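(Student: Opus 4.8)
The idea is to reduce all three identities to the three product formulas \eqref{rr1}--\eqref{rr3} proved in the previous lemma. Indeed, by the definition $[A,B]_{\sqq^{a}}=A*B-\sqq^{a}\,B*A$, each bracket appearing on the left-hand sides of \eqref{HallRR}--\eqref{HallRR1} is an explicit $\Q(\sqq)$-combination of products $[\co(i)]*[\co(j)]$, and these products are computed by \eqref{rr1}--\eqref{rr3}; substituting and collecting terms will produce the right-hand sides. Throughout one uses that the $[K_\alpha]$ are central (Proposition~\ref{prop:hallbasis}(1)), so that torsion factors such as $\haT_{m}*[K_{\co(i)}]$ may be moved freely.

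I would first handle the two degenerate cases. For \eqref{HallRR1}, write $[\co(r),\co(r+1)]_{\sqq^{-2}}=[\co(r)]*[\co(r+1)]-\sqq^{-2}[\co(r+1)]*[\co(r)]$ and apply \eqref{rr2} and \eqref{rr3} with $m=0$ (so the interior sum in \eqref{rr3} is empty and the $\delta_{m,odd}$-term is absent): the two copies of $[\co(r)\oplus\co(r+1)]$ cancel, and only $\sqq^{-2}(q-1)^2\haT_{1}*[K_{\co(r)}]$ survives, which is the claim. For \eqref{HallRR1/2}, expand $[\co(r),\co(r+2)]_{\sqq^{-2}}$ by \eqref{rr2} and \eqref{rr3} with $m=1$, and expand $[\co(r+1),\co(r+1)]_{\sqq^{2}}=(1-\sqq^{2})\,[\co(r+1)]*[\co(r+1)]$ by \eqref{rr1}. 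After substitution the $[\co(r)\oplus\co(r+2)]$ terms cancel, and the two contributions to $[\co(r+1)\oplus\co(r+1)]$—one from the $\delta_{1,odd}$-term of \eqref{rr3}, the other from the $[\co(r+1)\oplus\co(r+1)]$-term produced by \eqref{rr1}—cancel against one another; collecting the surviving terms yields the stated identity with its two $K$-summands.

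The substantial case is \eqref{HallRR} for $m\ge 2$. Here I would expand $[\co(r),\co(r+m+1)]_{\sqq^{-2}}$ using \eqref{rr2}--\eqref{rr3}, and $[\co(r+1),\co(r+m)]_{\sqq^{2}}$ using \eqref{rr2}--\eqref{rr3} after the substitution $r\mapsto r+1$, $m\mapsto m-2$ (noting $\delta_{m-2,odd}=\delta_{m,odd}$ and $r+1+\tfrac{m-1}{2}=r+\tfrac{m+1}{2}$). Four families of terms then arise: the ``extreme'' direct sums $[\co(r)\oplus\co(r+m+1)]$ and $[\co(r+1)\oplus\co(r+m)]$; the ``interior'' direct sums $[\co(r+a)\oplus\co(r+m+1-a)]$ with $2\le a\le\lfloor\tfrac{m+1}{2}\rfloor$; when $m$ is odd, a ``diagonal'' term $[\co(r+\tfrac{m+1}{2})\oplus\co(r+\tfrac{m+1}{2})]$ from each bracket; and the torsion terms $\haT_{m+1}*[K_{\co(r)}]$ and $\haT_{m-1}*[K_{\co(r+1)}]$. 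The $[\co(r)\oplus\co(r+m+1)]$ pair cancels internally in the first bracket, the two diagonal terms cancel against each other, and the torsion terms assemble exactly into $\sqq^{-2}(q-1)^2\haT_{m+1}*[K_{\co(r)}]-\sqq^{-4}(q-1)^2\haT_{m-1}*[K_{\co(r+1)}]$.

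The main bookkeeping obstacle is the cancellation of the extreme term $[\co(r+1)\oplus\co(r+m)]$ against the interior sums. The plan is to re-index the interior sum coming from the second bracket by $a\mapsto a-1$, so that both interior sums run over terms $[\co(r+a)\oplus\co(r+m+1-a)]$ with coefficient $\pm(\sqq^4-1)\sqq^{4a-m-6}$, and then to check, using $\lfloor\tfrac{m-1}{2}\rfloor+1=\lfloor\tfrac{m+1}{2}\rfloor$ and that the excluded index $\tfrac{m-1}{2}+1$ equals the excluded index $\tfrac{m+1}{2}$, that the two index ranges agree for $a\ge 2$, so that all interior terms with $a\ge 2$ cancel in pairs; finally the leftover $a=1$ interior term of the first bracket must be matched by the residual $[\co(r+1)\oplus\co(r+m)]$ coefficient $\sqq^{2-m}-\sqq^{-2-m}$ surviving from the extreme term of the second bracket, which reduces to the elementary identity $(\sqq^4-1)\sqq^{-m-2}=\sqq^{2-m}-\sqq^{-2-m}$. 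A quick direct check at $m=2$ (where both interior sums are empty) confirms that the general bookkeeping is consistent with the edge case, and completes the verification of \eqref{HallRR}.
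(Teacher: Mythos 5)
Your proposal is correct and follows essentially the same route as the paper: expand each bracket via \eqref{rr1}--\eqref{rr3} (applying \eqref{rr2}--\eqref{rr3} with $r\mapsto r+1$, $m\mapsto m-2$ for the second bracket), and cancel the line-bundle direct-sum terms, with the re-indexing $a\mapsto a-1$ and the identity $(\sqq^4-1)\sqq^{-m-2}=\sqq^{2-m}-\sqq^{-2-m}$ handling the matching of the interior sums against the leftover extreme term exactly as in the paper's displayed computation of \eqref{HallRR}. One caveat: for \eqref{HallRR1/2} your bookkeeping, carried out to the end, gives the coefficient of $[K_{\co(r+1)}]$ as $-\sqq^{-2}(1-\sqq^{2})\sqq^{-1}(q-1)=+\sqq^{-3}(q-1)^{2}$, i.e.\ with a \emph{plus} sign rather than the minus sign displayed in the Proposition; this agrees with what relation \eqref{iDR3} at $s=r+1$ demands, since $\Theta_{0}=\frac{1}{v-v^{-1}}$ contributes $(v^{-2}-v^{-4})\Theta_{0}=v^{-3}$ positively, so the sign in the stated \eqref{HallRR1/2} appears to be a typo and your assertion that the computation ``yields the stated identity'' should be amended accordingly.
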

	
	\begin{proof}
		Let us prove \eqref{HallRR}. By \eqref{rr2}--\eqref{rr3}, we have
		\begin{align*}
			[\co(r+1)]*[\co(r+m)]
			=&\sqq^{-m} [\co(r+1)\oplus \co(r+m)] + \frac{(q-1)^2}{\sqq^2} \haT_{m}*[K_{\co(r+1)}].
		\end{align*}
		On the other hand, we have
		\begin{align*}
			\co(r+m)*\co(r+1)
			&= \sqq^{-m+2}[\co(r+1)\oplus \co(r+m)]
			\\
			&+ \sum_{1\leq a\leq \lfloor\frac{m-1}{2}\rfloor; a\neq \frac{m-1}{2}} (\sqq^4-1)\sqq^{4a-2-m} [\co(r+1+a)\oplus \co(r+m-a)]
			\\
			&+ \delta_{m, odd}(\sqq^2-1)\sqq^{m-4} [\co(r+1+\frac{m-1}{2})\oplus \co(r+1+\frac{m-1}{2})].
			\notag
		\end{align*}
		Using the above formulas and \eqref{rr2}--\eqref{rr3} again, we prove \eqref{HallRR} by a direct computation:
		\begin{align*}
			&[\co(r),\co(r+m+1)]_{\sqq^{-2}} -\sqq^{-2}[\co(r+1),\co(r+m)]_{\sqq^{2}}
			\\
			=&\sqq^{-(m+2)} [\co(r)\oplus \co(r+m+1)] + \frac{(q-1)^2}{\sqq^2} \haT_{m+1}*[K_{\co(r)}]- \sqq^{-m-2} [\co(r)\oplus \co(r+m+1)]
			\\
			&-\sum_{1\leq a\leq \lfloor\frac{m+1}{2}\rfloor; a\neq \frac{m+1}{2}}(\sqq^4-1)\sqq^{4a-6-m} [\co(r+a)\oplus \co(r+m+1-a)]
			\notag
			\\
			&- \delta_{m, odd}(\sqq^2-1)\sqq^{m-4} [\co(r+\frac{m+1}{2})\oplus \co(r+\frac{m+1}{2})]
			-\sqq^{-(m+2)} [\co(r+1)\oplus \co(r+m)]
			\\
			&- \frac{(q-1)^2}{\sqq^4} \haT_{m-1}*[K_{\co(r+1)}]
			+\sqq^{-m+2}[\co(r+1)\oplus \co(r+m)]
			\\
			&+ \sum_{1\leq a\leq \lfloor\frac{m-1}{2}\rfloor; a\neq \frac{m-1}{2}} (\sqq^4-1)\sqq^{4a-2-m} [\co(r+1+a)\oplus \co(r+m-a)]
			\notag
			\\
			&+ \delta_{m, odd}(\sqq^2-1)\sqq^{m-4} [\co(r+1+\frac{m-1}{2})\oplus \co(r+1+\frac{m-1}{2})]
			\\
			=&\sqq^{-2} (q-1)^2 \haT_{m+1}*[K_{\co(r)}]-\sqq^{-4} (q-1)^2\haT_{m-1}* [K_{\co(r+1)}].
		\end{align*}
		
		The proofs of the identities \eqref{HallRR1/2} and \eqref{HallRR1} are entirely similar by use of \eqref{rr2}--\eqref{rr3}, and will be skipped.
		%
		%
	\end{proof}

	\subsection{Relation \eqref{eq:hh1} and Jordan $\imath$quiver}
	\label{subsec:Jordan}
	
	The relation \eqref{eq:hh1} in $\tMHX$ is formulated as the commutativity among $\haT_m$ in Proposition~\ref{prop:commTheta} below. As $\haT_m$ is defined via torsion sheaves on $\PL$ in \eqref{def:Theta}, we shall approach the commutativity by establishing the commutativity of the $\imath$Hall algebra of the Jordan quiver, which is isomorphic to the $\imath$Hall algebra of $\tor_x(\PL)$, for any closed point $x\in \PL$.
	
	Let $ \QJ$ be the Jordan quiver, i.e., the quiver with a single vertex $1$ and a single loop $\alpha:1\rightarrow 1$.
	Let $\LaJ^\imath$ be the $\imath$quiver algebra of the $\imath$quiver $\QJ$ equipped with trivial involution \cite{LW22, LW20}, and we can identify
	\[
	\LaJ^\imath=\bfk\ov{\QJ} /( \alpha\varepsilon-\varepsilon \alpha, \varepsilon^2 ),
	\]
	where $\ov{\QJ}$ denotes the following quiver
	\begin{center}\setlength{\unitlength}{0.8mm}
		\vspace{-7mm}
		\begin{equation}
			\begin{picture}(30,6)(0,0)
				\put(-30,-2){ $\ov{\QJ}:$}
				\put(0,-2){\small $1$}
				\put(15.5,-2){.}
				
				\qbezier(-1,1)(-3,4.5)(-6,4)
				\qbezier(-6,4)(-11.5,0)(-6,-4)
				\qbezier(-6,-4)(-3,-4.5)(-1,-1)
				\put(-1,-1){\vector(1,1){0.3}}
				\put(-13,-1){\small $\alpha$}
				\color{purple}
				
				\qbezier(3,1)(5,4.5)(8,4)
				\qbezier(8,4)(13.5,0)(8,-4)
				\qbezier(8,-4)(5,-4.5)(3,-1)
				\put(3,-1){\vector(-1,1){0.3}}
				\put(12.5,-1){\small $\varepsilon$}
			\end{picture}
			\vspace{0.2cm}
		\end{equation}
	\end{center}
	Then $\LaJ^\imath$ is a commutative $\bfk$-algebra. Clearly, $\rep^{\rm nil}(\LaJ^\imath)\cong \calc_1(\rep^{\rm nil}_\bfk (\QJ))$, and we shall identify these two categories below. We can view $\rep^{\rm nil}_\bfk(\QJ)$ naturally as a full subcategory of $\rep^{\rm nil}(\LaJ^\imath)$.
	
	Let $\tMHLJ:=\cs\cd\widetilde{\ch}(\rep^{\rm nil}(\LaJ^\imath))$ be the semi-derived Ringel-Hall algebra of $\LaJ^\imath$, following \cite{LW22, LW20} (also see Section~\ref{sec:HallPL}). Note that $\langle X,Y\rangle=0$ for any $X,Y\in\rep^{\rm nil}_\bfk(\QJ)$. So
	\begin{align*}
		[X^\bullet]*[Y^\bullet]=[X^\bullet]\diamond[Y^\bullet],\,\,\forall X^\bullet, Y^\bullet\in \rep^{\rm nil}(\LaJ^\imath).
	\end{align*}
	We shall study the combinatorial implication of $\tMHLJ$ in depth in \cite{LRW21}, and here we only need the following commutativity property.
	
	\begin{lemma}
		\label{lem:comm Jordan}
		The algebra $\tMHLJ$ is commutative.
	\end{lemma}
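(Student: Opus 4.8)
The plan is to show directly that the semi-derived Ringel--Hall algebra $\tMHLJ$ of the commutative algebra $\LaJ^\imath$ is commutative by checking that the generators commute. First I would exploit the $\imath$Hall basis from Proposition~\ref{prop:hallbasis}: every element of $\tMHLJ$ is a $\Q(\sqq)$-linear combination of products $[M]*[K_\alpha]$ with $M\in\rep^{\rm nil}_\bfk(\QJ)$ and $\alpha\in K_0(\rep^{\rm nil}_\bfk(\QJ))\cong\bbZ$, and the $[K_\alpha]$ are central by the discussion after Definition~\ref{def:iH}. So it suffices to prove that $[M]*[N]=[N]*[M]$ for all nilpotent $\QJ$-representations $M,N$, i.e.\ inside $\rep^{\rm nil}_\bfk(\QJ)$ viewed as a subcategory of $\rep^{\rm nil}(\LaJ^\imath)$. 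Since $\langle X,Y\rangle_{\ca}=0$ for all $X,Y\in\rep^{\rm nil}_\bfk(\QJ)$ (the Jordan quiver has one vertex and one loop, and nilpotent representations have zero Euler form), the twisting is trivial and $[M]*[N]=[M]\diamond[N]$, so I only need commutativity of the ordinary Ringel--Hall product $\diamond$ on stalk complexes.

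The second step reduces this to a symmetry of Hall numbers. By the definition of $\diamond$ and the Riedtmann--Peng formula \eqref{eq:RP}, commutativity of $[M]\diamond[N]$ and $[N]\diamond[M]$ for $M,N\in\rep^{\rm nil}_\bfk(\QJ)$ amounts to the identity $F_{MN}^{L}=F_{NM}^{L}$ for every $L$, up to the automorphism factors which are already symmetric in $M,N$. Here is where the key structural input enters: in the category $\rep^{\rm nil}_\bfk(\QJ)$ — equivalently, finite-length modules over the complete DVR $\bfk[[t]]$, equivalently finite abelian $p$-groups in the classical analogy — the Hall algebra is the classical (commutative) Hall algebra. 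The cleanest way to invoke this is the classical fact, going back to Steinitz and Hall, that $F_{MN}^{L}=F_{NM}^{L}$ because both count chains $0\subseteq L'\subseteq L$ with prescribed subquotient types, and the self-duality $D=\Hom_\bfk(-,\bfk)$ of $\rep^{\rm nil}_\bfk(\QJ)$ (which fixes the isoclass of every object, since each $S^{(n)}$ is self-dual) interchanges sub and quotient, sending such a chain for $(M,N,L)$ to one for $(N,M,L)$. Alternatively one can cite directly that $\ch(\rep^{\rm nil}_\bfk(\QJ))$ is isomorphic to the ring of symmetric functions (Hall--Littlewood), hence commutative; this is the statement recalled in the introduction of the paper, and \cite{LRW21} is cited for the $\imath$-version.

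The third step is to lift commutativity from the subcategory $\rep^{\rm nil}_\bfk(\QJ)$ to all of $\tMHLJ$. Using Lemma~\ref{lem:Cf} (or rather its analogue for $\calc_1(\rep^{\rm nil}_\bfk(\QJ))\cong\rep^{\rm nil}(\LaJ^\imath)$), every class $[M^\bullet]$ equals $[H(M^\bullet)]*[K_{\Im d}]$, so $\tMHLJ$ is spanned by $\{[M]*[K_\alpha]\}$ as above; since the $[K_\alpha]$ are central and the $[M]$ with $M\in\rep^{\rm nil}_\bfk(\QJ)$ pairwise commute by the previous step, the whole algebra is commutative. I expect the main obstacle to be purely presentational rather than mathematical: one must be careful that in the $1$-periodic setting the Hom/Ext spaces used in the product $\diamond$ are those of $\calc_1(\ca)$, not of $\ca$ — but Lemma~\ref{lem:Ext=Ext+Hom} and Lemma~\ref{lem:Cf} precisely package this, and on the subcategory of stalk complexes $C_X$ with $X\in\rep^{\rm nil}_\bfk(\QJ)$ the relevant numbers reduce (after the $[K_\alpha]$-factor is split off) to classical Hall numbers in $\rep^{\rm nil}_\bfk(\QJ)$. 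If one prefers to avoid re-deriving the classical commutativity, the one-line proof is to invoke \cite{LRW21} or the classical Hall-algebra-of-a-DVR result directly; given the paper's framing I would state the reduction as above and then cite commutativity of the classical Hall algebra of the Jordan quiver.
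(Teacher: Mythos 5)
Your overall reduction — centrality of the $[K_\alpha]$ plus the $\imath$Hall basis of Proposition~\ref{prop:hallbasis}, so that it suffices to prove $[L]*[M]=[M]*[L]$ for stalk complexes of $L,M\in\rep^{\rm nil}_\bfk(\QJ)$ — is exactly the paper's, and the duality $D=\Hom_\bfk(-,\bfk)$ is indeed the key tool. But your second step has a genuine gap. The product $[L]\diamond[M]$ is computed in $\calc_1(\rep^{\rm nil}_\bfk(\QJ))\cong\rep^{\rm nil}(\LaJ^\imath)$, where by Lemma~\ref{lem:Ext=Ext+Hom} one has $\Ext^1_{\calc_1(\ca)}(C_L,C_M)\cong\Ext^1(L,M)\oplus\Hom(L,M)$; the middle terms $N$ are genuine $1$-periodic complexes with in general nonzero differential, not $\bfk\QJ$-modules. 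Consequently the structure constants are \emph{not} the classical Hall numbers $F^{L'}_{MN}$ of the Jordan quiver, and the Steinitz--Hall commutativity of the Hall algebra of a DVR does not transfer directly: the paper explicitly remarks, immediately after this lemma, that the twisted Hall algebra $\widetilde{\ch}(\calc_1(\rep^{\rm nil}_\bfk(\QJ)))$ is \emph{not} commutative. Commutativity only emerges after passing to the semi-derived quotient, so your assertion that ``the relevant numbers reduce, after the $[K_\alpha]$-factor is split off, to classical Hall numbers'' is precisely the nontrivial point, and it is not established by citing the classical fact.

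What the paper does instead is apply the duality at the level of complexes: $D$ is a contravariant exact self-equivalence of $\rep^{\rm nil}(\LaJ^\imath)$ fixing each $X\in\rep^{\rm nil}_\bfk(\QJ)$, which yields a bijection $\Ext^1_{\LaJ^\imath}(L,M)_N\cong\Ext^1_{\LaJ^\imath}(M,L)_{D(N)}$ for every complex $N$. The essential extra step missing from your proposal is the Claim that $[N]$ and $[D(N)]$ have the same class $[X\oplus K_S^{\oplus a}]$ in $\tMHLJ$; this is proved using Lemma~\ref{lem: iso in singularity}, Corollary~\ref{cor:acyclic} and Lemma~\ref{lem:Cf} to see that $D(N)\cong N$ in $\cd_1(\rep^{\rm nil}_\bfk(\QJ))$ and then comparing dimensions. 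Once that is in place, grouping the middle terms by their class in $\tMHLJ$ and summing the bijections over each class (together with $|\Hom(L,M)|=|\Hom(M,L)|$) gives $[L]*[M]=[M]*[L]$. To repair your outline you must either insert this claim or actually prove the reduction to classical Hall numbers that you assert, which amounts to comparable work.
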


	\begin{proof}
		The standard duality functor $D=\Hom_\bfk(-,\bfk):\rep^{\rm nil}(\LaJ^\imath) \stackrel{\simeq}{\rightarrow} \rep^{\rm nil}(\LaJ^\imath)$ is a contravariant exact functor. It is well known that $D(X)\cong X$ for any $X\in\rep^{\rm nil}_\bfk (\QJ)$.
		Let $L, M, X\in \rep^{\rm nil}_\bfk(\QJ)$ and $N\in\rep^{\rm nil}(\LaJ^\imath)$. We have
		\begin{align}  \label{eq:EE}
			\Ext^1_{\LaJ^\imath}(L,M)_N \cong \Ext^1_{\LaJ^\imath} \big(D(M),D(L) \big)_{D(N)} \cong  \Ext^1_{\LaJ^\imath}(M,L)_{D(N)}.
		\end{align}
		Let $S$ be the simple $\bfk \QJ$-module.
		By Proposition~ \ref{prop:hallbasis}, $\tMHLJ$ has a Hall basis $\{[X]*[K_{S}]^a\mid X\in \rep^{\rm nil}_\bfk(\QJ), a\in\Z\}$.

		{\bf Claim ($\star$). } We have $[D(N)]=[X\oplus K_S^{\oplus a}]$ if and only if $[N] =[X\oplus K_S^{\oplus a}]$ in $\tMHLJ$.
		
		Let us prove the Claim. By symmetry it suffices to prove the ``if'' part. 
		By Lemma \ref{lem:Cf} and Proposition \ref{prop:hallbasis}, we have $H^\bullet(N)\cong H^\bullet(X)=X$, and then it follows from Lemma \ref{lem: iso in singularity} and Corollary \ref{cor:acyclic} that there exist the following exact sequences in $\rep^{\rm nil}(\LaJ^\imath)$
		\begin{align*}
			0\rightarrow U_1\rightarrow Z \rightarrow N\rightarrow0,
			\qquad 0\rightarrow U_2\rightarrow Z\rightarrow X\rightarrow0
		\end{align*}
		with $U_1,U_2$ acyclic.
		In particular, $N\cong X$ in $\cd_{1}(\rep_\bfk^{\rm nil}(\QJ))$.  
		Hence, we have $D(N) \cong D(X) \cong X$ in $\cd_{1}(\rep_\bfk^{\rm nil}(\QJ))$ since $D$ preserves acyclic complexes. Whence it follows from Lemma \ref{lem:Cf} that $[D(N)]=[X\oplus K_S^{\oplus a}]$ in $\tMHLJ$ by comparing dimensions. The Claim follows.
		
		Let $a\in\N$, $X\in \rep_\bfk^{\rm nil}(\QJ)$. It follows by \eqref{eq:EE} and Claim ($\star$) that there is a natural bijection
		\begin{align*}
			\bigsqcup_{{ [N]=[X\oplus K_S^{\oplus a} ] \in \tMHLJ}} \Ext^1_{\LaJ^\imath}(L,M)_N
			\stackrel{1:1}{\longleftrightarrow}
			\bigsqcup_{
				{ [N]=[X\oplus K_S^{\oplus a} ] \in \tMHLJ}} \Ext^1_{\LaJ^\imath}(M,L)_N.
		\end{align*}
		It is understood that $[N]$ runs over $\Iso \big(\rep^{\rm nil}(\LaJ^\imath) \big)$ in the bijection above.
		
		By $\Hom_{\LaJ^\imath}(L,M)\cong \Hom_{\LaJ^\imath}(M,L)$, we have
		\begin{align*}
			[L]*[M] =& \sum_{[N]\in\Iso(\rep^{\rm nil}(\LaJ^\imath))} 
			\frac{|\Ext^1_{\LaJ^\imath}(L,M)_N|}{|\Hom_{\LaJ^\imath}(L,M)|}    [N]
			\\
			=&\sum_{\stackrel{a\in \N, [X]}{ [N]=[X\oplus K_S^{\oplus a} ] \in \tMHLJ}} \frac{|\Ext^1_{\LaJ^\imath}(L,M)_N|}{|\Hom_{\LaJ^\imath}(L,M)|}    [X]*[K_S]^a
			\\
			=& \sum_{\stackrel{a\in \N, [X]}{ [N]=[X\oplus K_S^{\oplus a} ] \in \tMHLJ}} \frac{|\Ext^1_{\LaJ^\imath}(M,L)_N|}{|\Hom_{\LaJ^\imath}(M,L)|}    [X]*[K_S]^a
			\\
			=&[M]*[L].
		\end{align*}
		It is understood that $[X]$ (and respectively, $[N]$) runs over $\Iso \big(\rep^{\rm nil}_\bfk(\QJ) \big)$ (and respectively, $\Iso \big(\rep^{\rm nil}(\LaJ^\imath) \big)$) in the above summations.
		The lemma is proved.
	\end{proof}
	
	Note however that the (twisted) Hall algebra $\widetilde{\ch}(\calc_1(\rep_\bfk^{\rm nil} (\QJ)))$ is not commutative.

	Recall there exists an equivalence of categories $\tor_x(\PL)\simeq \rep_{\bfk_x}^{\rm nil}(\QJ)$, for each $x\in \PL$, inducing an embedding $\widetilde{\ch}\big(\calc_1(\rep_{\bfk_x}^{\rm nil}(\QJ))\big) \rightarrow \widetilde{\ch}\big(\calc_1(\coh(\PL))\big)$ and then an embedding
	\[
	\iota_{x}:{}^\imath\widetilde{\ch}(\bfk_x\QJ) \longrightarrow \tMHX.
	\]
	
	Denote by ${}^\imath\widetilde{\ch}(\tor(\P_\bfk^1))$ the twisted semi-derived Ringel-Hall algebra of $\calc_1(\tor(\P_\bfk^1))$. Then
	${}^\imath\widetilde{\ch}(\tor(\P_\bfk^1))$ is naturally a subalgebra of $\tMHX$. Note that  $\Ext^1(X,Y)=0=\Hom(X,Y)$ for any $X\in\tor_{x}(\P_\bfk^1), Y \in\tor_y(\P_\bfk^1)$ with $x \neq y\in \PL$.
	Together with Lemma \ref{lem:comm Jordan}, this implies the commutativity among $\haT_m$ below (which is Relation \eqref{eq:hh1} in $\tMHX$).
	
	\begin{proposition}
		\label{prop:commTheta}
		The algebra ${}^\imath\widetilde{\ch}(\tor(\P_\bfk^1))$ is commutative. In particular, we have
		\begin{align*}
			[\haT_{m},\haT_n]=0, \qquad
			\forall m, n \geq 1.
		\end{align*}
	\end{proposition}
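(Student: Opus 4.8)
The plan is to deduce Proposition~\ref{prop:commTheta} from the commutativity of the Jordan $\imath$Hall algebra (Lemma~\ref{lem:comm Jordan}) together with the orthogonality of the blocks $\tor_x(\PL)$ for distinct closed points. First I would record that the category $\tor(\PL)$ decomposes as the orthogonal direct sum $\bigoplus_{x\in\PL}\tor_x(\PL)$, and that this decomposition is compatible with taking $1$-periodic complexes, so that $\calc_1(\tor(\PL))=\bigoplus_{x}\calc_1(\tor_x(\PL))$ as an exact category with no nonzero morphisms or extensions between distinct summands. Applying the semi-derived Ringel--Hall construction and the twist by the (here trivial, on torsion sheaves) Euler form, this yields a decomposition of ${}^\imath\widetilde{\ch}(\tor(\PL))$ as a tensor product of the subalgebras $\iota_x\big({}^\imath\widetilde{\ch}(\bfk_x\QJ)\big)$ over $x\in\PL$, with factors corresponding to distinct points commuting with each other. (One should be a little careful about the $K_\alpha$-type generators: the classes $\widehat{S_x^{(n)}}$ for varying $x$ are all proportional within $K_0(\coh(\PL))$, so strictly speaking the "tensor product" statement needs the caveat that the quantum-torus parts are identified; but since those central elements commute with everything, this causes no trouble for the commutativity claim.)

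Next I would invoke the equivalence $\tor_x(\PL)\simeq\rep^{\rm nil}_{\bfk_x}(\QJ)$ recalled in the excerpt, which induces an isomorphism of $\imath$Hall algebras ${}^\imath\widetilde{\ch}(\bfk_x\QJ)\cong\tMHLJ$ (with $\bfk$ replaced by $\bfk_x$), and apply Lemma~\ref{lem:comm Jordan} to conclude that each factor $\iota_x\big({}^\imath\widetilde{\ch}(\bfk_x\QJ)\big)$ is commutative. Combined with the inter-block commutativity from the previous step, this shows ${}^\imath\widetilde{\ch}(\tor(\PL))$ is commutative, which is the first assertion. For the displayed identity $[\haT_m,\haT_n]=0$, I would simply observe from the definition \eqref{def:Theta} that each $\haT_m$ is (up to a scalar) a sum of classes $[S_\bn]$ with $\bn\in\mathbb N(\PL)$ and $||\bn||=m$, hence $\haT_m\in{}^\imath\widetilde{\ch}(\tor(\PL))$; the commutativity of that algebra then gives the result immediately.

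I do not expect any serious obstacle here — the proposition is essentially a bookkeeping corollary of Lemma~\ref{lem:comm Jordan}. The only point requiring a little care is the claim that the Hall multiplication respects the block decomposition: one needs that if $L\in\tor(\PL)$ fits in a short exact sequence $0\to N\to L\to M\to 0$ with $M\in\tor_x(\PL)$ and $N\in\tor_y(\PL)$, $x\neq y$, then $L\cong M\oplus N$ (so the structure constants factor through the blocks), which follows from $\Ext^1(M,N)=0$; and similarly at the level of $1$-periodic complexes, where one uses Lemma~\ref{lem:Ext=Ext+Hom} to see that $\Ext^1_{\calc_1}(C_X,C_Y)$ and $\Ext^1_{\calc_1}$ of the acyclic generators between distinct blocks vanish. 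Granting that, the factorization ${}^\imath\widetilde{\ch}(\tor(\PL))\cong\bigotimes_{x}{}^\imath\widetilde{\ch}(\bfk_x\QJ)$ (with the central tori identified) is routine, and the proof concludes as above.
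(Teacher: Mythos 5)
Your argument is correct and is exactly the one the paper uses: the orthogonal block decomposition $\tor(\PL)=\bigoplus_x\tor_x(\PL)$ (with $\Hom$ and $\Ext^1$ vanishing between distinct blocks) reduces everything to the commutativity of each ${}^\imath\widetilde{\ch}(\bfk_x\QJ)$, which is Lemma~\ref{lem:comm Jordan}, and $\haT_m$ lies in ${}^\imath\widetilde{\ch}(\tor(\P_\bfk^1))$ by definition. Your extra care about the structure constants factoring through the blocks and about the identification of the quantum-torus parts is sound and only makes explicit what the paper leaves implicit.
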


	\subsection{Relation \eqref{eq:hB1}}
	
	Recall $q=\sqq^2$. We reformulate the relation \eqref{eq:hB1}  in $\tMHX$ (with $\Theta_m$ replaced by $\haT_m$, $B_{1,r}$ by $-\frac{1}{q-1}[\co(r)]$, and $v$ by $\sqq$) in the following proposition.
	
	\begin{proposition}
		\label{prop:ThetaTheta}
		For any $m\geq 1$ and $r\in\Z$, we have
		\begin{align}  \label{TOTO}
			&\big[\haT_{m},[\co(r)] \big] + \big[\haT_{m-2},[\co(r)] \big]*[K_\de]
			\\\notag
			&=\sqq^{2} \big[\haT_{m-1},[\co(r+1)] \big]_{\sqq^{-4}} +\sqq^{-2} \big[\haT_{m-1},[\co(r-1)] \big]_{\sqq^{4}}*[K_\de].
		\end{align}
	\end{proposition}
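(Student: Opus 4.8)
The plan is to reduce the identity \eqref{TOTO} to a local computation on a single closed point $x \in \PL$ and then to a concrete Hall-number calculation in the categories $\tor_x(\PL)$ and $\vec(\PL)$. First I would expand each commutator using the definition \eqref{eq:tH} of the twisted multiplication together with the Euler form \eqref{eq:Euler}: for a torsion sheaf $S_\bn$ of degree $||\bn|| = m$ and a line bundle $\co(r)$ one has $\langle \widehat{S_\bn}, \widehat{\co(r)}\rangle = -m$ and $\langle \widehat{\co(r)}, \widehat{S_\bn}\rangle = m$ (from $\rk(\widehat{S_\bn}) = 0$, $\deg(\widehat{S_\bn}) = m$, $\rk(\widehat{\co(r)}) = 1$). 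So the four terms $\haT_k * [\co(s)]$ and $[\co(s)] * \haT_k$ pick up explicit powers of $\sqq$, and what remains is to understand the {\em untwisted} products $[S_\bn] \diamond [\co(s)]$ and $[\co(s)] \diamond [S_\bn]$ in $\ch(\calc_1(\coh(\PL)))$ modulo the ideal $\cI$.

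Next I would compute these products via Lemma~\ref{lem:Cf}. For $[\co(s)]\diamond [S_\bn]$: an extension of $S_\bn$ by $\co(s)$ in $\calc_1(\ca)$ restricts (Lemma~\ref{lem:Ext=Ext+Hom}) to $\Ext^1_{\coh}(S_\bn,\co(s)) \oplus \Hom_{\coh}(S_\bn, \co(s))$; the latter is zero since there are no maps from torsion to vector bundles, and $\Ext^1_{\coh}(S_\bn, \co(s)) \cong \mathrm{D}\Hom(\co(s), \tau(S_\bn))$ by Serre duality, which is governed by the maps $\co(s)\to \co$ cutting out $S_\bn$. The middle terms of the nonsplit extensions are again vector bundles $\co(s-||\bm||)\oplus\co(s+\text{stuff})$ decorated by a torsion part --- in fact these are exactly the $C_f$ for $f:\co(s-m)\to\co(s)$, so that $[\co(s)]\diamond[S_\bn]$ repackages into terms $[\co(a)\oplus\co(b)]*[K_{\co(\cdot)}]$ plus a diagonal correction. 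For $[S_\bn]\diamond[\co(s)]$: here the relevant complexes $(Y\oplus X, \binom{0\ f}{0\ 0})$ have $X$ torsion and $Y = \co(s)$, and Lemma~\ref{lem:Cf} again rewrites $[C_f] = [\ker f \oplus \coker f]*[K_{\Im f}]$, but now with $\Im f$ a proper subsheaf of $\co(s)$, so the $K$-classes are of the form $[K_{\co(s) - \text{torsion}}]$; using $[K_{\alpha+\beta}] = [K_\alpha]*[K_\beta]$ and $\delta$-degree bookkeeping these produce the $[K_\de]$ factors on the two sides of \eqref{TOTO}.

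The key structural simplification is that, since $\tor(\PL) = \bigoplus_x \tor_x(\PL)$ with the blocks mutually $\Hom$- and $\Ext^1$-orthogonal, both $\haT_m$-commutators decompose as sums over $x$, and within a single block $\tor_x(\PL) \simeq \rep^{\rm nil}_{\bfk_x}(\QJ)$ the needed Hall numbers for extensions of $S_x^{(n)}$ against line bundles are exactly the ones recorded in \eqref{hom:SS}--\eqref{ext:SS} and in the $\co(r+m+1)*\co(r)$ computation of the previous subsection. So I would set up the bookkeeping generating-function-style: collect the coefficient of each basis element $[\co(a)\oplus\co(b)]*[K_\alpha]$ on both sides of \eqref{TOTO} and check equality. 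The main obstacle --- as the authors themselves flag in the introduction --- is precisely the $[K_\de]$ terms: the analogous identity in the classical Hall algebra of $\PL$ (\`a la Kapranov--Baumann--Kassel--Schiffmann) has {\em no} such terms, so there is no shortcut, and one must carry out the homological computation of $\widehat{\Im d}$ for the acyclic complexes arising from $[S_\bn]\diamond[\co(r)]$ carefully enough to see that the $\delta$-degrees on the left ($\haT_m$ vs $\haT_{m-2}C$) and on the right ($\haT_{m-1}$ at shift $r+1$ vs $r-1$, the latter times $C$) match after the $\sqq$-powers from the Euler form are taken into account. I expect this to be the genuinely laborious step; once the $K_\de$-bookkeeping is pinned down, the remaining coefficient-matching is a finite, if tedious, verification using \eqref{rr2}--\eqref{rr3} and \eqref{eq:hTm}.
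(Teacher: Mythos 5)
Your overall strategy --- compute $\haT_m * [\co(r)]$ and $[\co(r)] * \haT_m$ separately as linear combinations of $\imath$Hall basis elements and then match coefficients in \eqref{TOTO} --- is the same as the paper's (Propositions \ref{prop:ThetaO} and \ref{prop:OTheta}). However, two concrete errors in your description of what these products look like would derail the execution. First, every middle term occurring in either product has class $\widehat{\co(r)} + m\delta$ in $K_0(\PL)$, hence rank one; so the basis elements that appear are of the form $[\co(r\pm a)\oplus S_{\bn}]*[K_{c\de}]$ with $S_\bn$ torsion, never rank-two terms $[\co(a)\oplus\co(b)]*[K_\alpha]$ as you assert. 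Your appeal to the $[\co(r+m+1)]*[\co(r)]$ computation of \eqref{rr3} is a red herring: that computation belongs to the verification of \eqref{iDR3}, not of \eqref{eq:hB1}. Second, you have the roles of $\Hom$ and $\Ext^1$ in Lemma \ref{lem:Ext=Ext+Hom} swapped between the two products: $[S_\bm]\diamond[\co(r)]$ is governed by genuine sheaf extensions $0\to\co(r)\to E\to S_\bm\to0$ (middle terms $\co(r+a)\oplus S_\bn$, with no $K$-factors), whereas $[\co(r)]\diamond[S_\bm]$ is governed by $\Hom(\co(r),S_\bm)$, i.e.\ by the complexes $C_f$ for $f:\co(r)\to S_\bm$, and it is Lemma \ref{lem:Cf} applied to these, $[C_f]=[\co(r-a)\oplus S_{\bm-\bl}]*[K_{a\de}]$ with $S_\bl=\Im f$ and $||\bl||=a$, that produces all the $[K_\de]$ factors.

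The more serious omission is quantitative. The identity does not localize at a single closed point: $\haT_m$ mixes torsion sheaves supported at points of all degrees, and the coefficient of $[\co(r+a)\oplus S_\bn]$ in $\haT_m*[\co(r)]$ is $\sum_{||\bm||=m}\big|\Ext^1(S_\bm,\co(r))_{\co(r+a)\oplus S_\bn}\big|$, a sum over all $\bm\geq\bn$ with $||\bm-\bn||=a$, i.e.\ over all closed points simultaneously. The local data \eqref{hom:SS}--\eqref{ext:SS} does not suffice to evaluate this; one needs the global counting identity $\sum_{||\bl||=a}\prod_{\bl_x\geq1}(q_x^{\bl_x}-q_x^{\bl_x-1})=q^{2a}-q^{2a-2}$ (Lemma \ref{lem:aut}), which is equivalent to the zeta-function identity $\prod_{d}\big((1-t^d)/(1-q^dt^d)\big)^{\Psi(d)}=(1-qt)/(1-q^2t)$ of Lemma \ref{lem:aut2}, itself resting on $\sum_{d|n}d\Psi(d)=q^n$. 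Without this input the clean coefficients $[2]\sqq^{4a}/\sqq^{2m+2}$ in \eqref{prod ThetaO} and in Proposition \ref{prop:OTheta} cannot be obtained, and the final coefficient matching cannot even be set up. Once those two propositions are in hand, the diagonal terms $[\co(r)\oplus S_\bm]$ cancel in each commutator (because $q-1=\sqq^2-1$), and \eqref{TOTO} reduces to the finite verification the paper performs.
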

	
	The long proof of Proposition~\ref{prop:ThetaTheta} will occupy the remainder of this subsection. We shall compute the products $\haT_{m}*[\co(r)]$ and $[\co(r)]*\haT_{m}$.
	
	%
	\subsubsection{The product $\haT_{m}*[\co(r)]$}
	
	
	Denote by $\Psi(d)$ the number of irreducible polynomials in one variable of degree $d$ over $\F_q$. It is a basic fact in Galois theory of finite fields that
	\begin{equation}   \label{eq:FF}
		\sum_{d |n} d \Psi(d)  =q^n,
		\quad \text{or equivalently,} \sum_{x\in \PL:\, d_x |n} d_x =q^n+1, \quad \text{ for } n\ge 1.
	\end{equation}
	We first prepare some combinatorial formulas, which will be used in the computation of $\haT_{m}*[\co(r)]$ in Proposition~\ref{prop:ThetaO}.
	For a closed point $x\in\PL$, denote
	\[q_x:=q^{d_x},\qquad \sqq_x:=\sqq^{d_x}.\]
	
	\begin{lemma}
		\label{lem:aut}
		For any positive integer $a$, we have
		\begin{align*}  
			\sum\limits_{||\bn||=a} \prod_{\bn_x \ge 1} \big(q_x^{ \bn_x} -q_x^{ \bn_x-1} \big)
			= q^{2a}-q^{2a-2}.
		\end{align*}
	\end{lemma}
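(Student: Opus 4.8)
The plan is to establish the identity
\[
\sum_{||\bn||=a} \prod_{\bn_x \ge 1} \big(q_x^{\bn_x} - q_x^{\bn_x-1}\big) = q^{2a} - q^{2a-2}
\]
by a generating function argument, exploiting the multiplicativity of the left-hand side over closed points. First I would note that, since the condition $||\bn|| = a$ means $\sum_x d_x \bn_x = a$ and the summand factors as a product over $x$, the sum is the coefficient of $t^a$ in the infinite product
\[
\prod_{x \in \PL} \Big( 1 + \sum_{n\ge 1} \big(q_x^{n} - q_x^{n-1}\big) t^{d_x n} \Big).
\]
The inner sum over $n$ is a geometric-type series: $\sum_{n\ge 1}(q_x^n - q_x^{n-1}) t^{d_x n} = (q_x - 1)\sum_{n\ge1} q_x^{n-1} t^{d_x n} = \dfrac{(q_x-1) t^{d_x}}{1 - q_x t^{d_x}}$, so each local factor simplifies to $\dfrac{1 - q_x t^{d_x} + (q_x-1)t^{d_x}}{1-q_x t^{d_x}} = \dfrac{1 - t^{d_x}}{1 - q_x t^{d_x}}$.

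Next I would compute the resulting product $\prod_{x\in\PL} \dfrac{1 - t^{d_x}}{1 - q_x t^{d_x}}$ in closed form using the zeta function of $\PL$ over $\F_q$. Grouping closed points by degree and writing $\Psi(d)$ for the number of closed points of degree $d$ (so $\Psi(d)$ counts degree-$d$ irreducible polynomials, with the extra point at infinity accounted for, matching the convention $\sum_{d|n} d\Psi(d) = q^n+1$ in \eqref{eq:FF}), the identity $\prod_{x}(1 - t^{d_x})^{-1} = \zeta_{\PL}(t) = \dfrac{1}{(1-t)(1-qt)}$ is the standard Euler product for the projective line. Substituting $t \mapsto t$ in the numerator product and $t \mapsto t$ with $q_x = q^{d_x}$ — that is, comparing $\prod_x (1 - q_x t^{d_x})^{-1}$ with $\prod_x (1 - (qt)^{d_x}\cdot \text{(shift)})^{-1}$ — requires care: $\prod_x (1 - q_x t^{d_x})^{-1}$ is \emph{not} literally $\zeta_{\PL}(qt)$ because $q_x t^{d_x} = q^{d_x} t^{d_x} = (qt)^{d_x}$, so in fact it \emph{is} $\zeta_{\PL}(qt) = \dfrac{1}{(1-qt)(1-q^2 t)}$. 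Hence the whole product telescopes to
\[
\prod_{x\in\PL} \frac{1 - t^{d_x}}{1 - q_x t^{d_x}} = \frac{\zeta_{\PL}(qt)}{\zeta_{\PL}(t)} = \frac{(1-t)(1-qt)}{(1-qt)(1-q^2 t)} = \frac{1-t}{1-q^2 t}.
\]
Finally, extracting the coefficient of $t^a$ from $\dfrac{1-t}{1-q^2 t} = (1-t)\sum_{k\ge 0} q^{2k} t^k$ gives $q^{2a} - q^{2a-2}$ for $a \ge 1$, which is exactly the claim.

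The main obstacle I anticipate is bookkeeping the point at infinity and the precise normalization of $\Psi(d)$ versus $d_x$, i.e., making sure the Euler product over \emph{all} closed points of $\PL$ (not just those coming from monic irreducibles over $\F_q[X]$) genuinely gives $\zeta_{\PL}(t) = \big((1-t)(1-qt)\big)^{-1}$; the relation \eqref{eq:FF} is the correct combinatorial input and should be invoked to justify this, either directly or by taking logarithmic derivatives and matching coefficients. An alternative, more elementary route avoiding zeta functions altogether would be to prove the identity by induction on $a$: split the sum according to the multiplicity $\bn_{x_0}$ at a fixed point, or more cleanly, show both sides satisfy the same recursion using \eqref{eq:FF} to handle the sum $\sum_{d_x \le a} d_x(\cdots)$ that arises — but the generating-function proof is cleaner and I would present that, with \eqref{eq:FF} cited as the justification for the Euler product evaluation.
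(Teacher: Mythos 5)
Your proof is correct and follows essentially the same route as the paper: both convert the sum into the Euler product $\prod_{x\in\PL}\frac{1-t^{d_x}}{1-q_xt^{d_x}}$ and evaluate it to $\frac{1-t}{1-q^2t}$ using the point-count identity \eqref{eq:FF} (the paper splits off the point at infinity and proves the remaining product identity by taking logarithms in its Lemma~\ref{lem:aut2}, which is exactly the computation underlying your zeta-function telescoping $\zeta_{\PL}(qt)/\zeta_{\PL}(t)$). Only a minor notational caveat: in the paper $\Psi(d)$ counts irreducible polynomials, so $\sum_{d|n}d\Psi(d)=q^n$, while the count of \emph{closed points} of $\PL$ satisfies $\sum_{d_x|n}d_x=q^n+1$; your argument uses only the latter, so it is unaffected.
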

	
	\begin{proof}
		For a variable $t$, we note
		\begin{align}  \label{eq:GF1}
			1 +\sum_{a\ge 1} (q^{2a}-q^{2a-2}) t^a
			= 1+ \frac{q^2 t -t}{1-q^2t} = \frac{1-t}{1-q^2t}.
		\end{align}
		On the other hand, we have
		\begin{align}
			&1 + \sum_{a\ge 1} \Big(\sum\limits_{x, \bn_x:||\bn||=a} \prod_{\bn_x \ge 1} (q_x^{ \bn_x} -q_x^{ \bn_x-1}) \Big)
			t^a
			\label{eq:GF2}
			\\
			&= \prod_{x \in \PL} \Big(1+\sum_{\bn_x\ge 1}  (1 -q_x^{-1}) q_x^{ \bn_x} t^{d_x \bn_x} \Big)
			\notag
			\\
			&= \prod_{x \in \PL} \left(1 + \frac{(1-q_x^{-1}) q_xt^{d_x} }{1- q_xt^{d_x}} \right)
			\notag
			\\
			&= \prod_{x \in \PL} \frac{1  - t^{d_x} }{1- q_x^{} t^{d_x}}
			= \frac{1 - t}{1- qt} \cdot \prod_{d =1}^{\infty} \left( \frac{1  - t^{d} }{1- q^{d} t^{d}} \right)^{\Psi(d)}.
			\notag
		\end{align}
		
		
		Thanks to \eqref{eq:GF1}--\eqref{eq:GF2}, the lemma follows from (and is indeed equivalent to) Lemma~\ref{lem:aut2} below.
	\end{proof}
	
	\begin{lemma}
		\label{lem:aut2}
		The following identity holds:
		\begin{align}  \label{eq:prod}
			\prod_{d =1}^{\infty} \left( \frac{1  - t^{d} }{1- q^{d} t^{d}} \right)^{\Psi(d)}
			= \frac{1-qt}{1-q^2t}.
		\end{align}
	\end{lemma}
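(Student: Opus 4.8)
**The plan is to prove the identity by working with the logarithmic derivative and invoking the prime factorization of polynomials over a finite field.** Set $P(t) = \prod_{d\ge 1}\left(\frac{1-t^d}{1-q^dt^d}\right)^{\Psi(d)}$. I would first take $\log$ of both sides and differentiate. Since $-\log(1-u) = \sum_{k\ge 1} u^k/k$, we get
\begin{align*}
\log P(t) = \sum_{d\ge 1}\Psi(d)\sum_{k\ge 1}\frac{1}{k}\big(q^{dk}-1\big)t^{dk}
= \sum_{n\ge 1}\frac{t^n}{n}\sum_{d\mid n} d\,\Psi(d)\big(q^{n}-1\big),
\end{align*}
where I have grouped terms by $n = dk$ and used $k = n/d$, so $1/k = d/n$. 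Now the crucial input is \eqref{eq:FF}: $\sum_{d\mid n} d\,\Psi(d) = q^n$. Hence the inner sum is $q^n(q^n-1) = q^{2n}-q^n$, giving
\begin{align*}
\log P(t) = \sum_{n\ge 1}\frac{q^{2n}-q^n}{n}t^n = -\log(1-q^2t)+\log(1-qt) = \log\frac{1-qt}{1-q^2t}.
\end{align*}
Since both $P(t)$ and $(1-qt)/(1-q^2t)$ have constant term $1$ as formal power series, exponentiating yields the claim.

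**An equivalent and perhaps cleaner route avoids differentiation entirely.** One recalls the zeta function of $\mathbb{P}^1_{\bfk}$, namely $Z_{\mathbb{P}^1}(t) = \prod_{x\in\mathbb{P}^1_{\bfk}}(1-t^{d_x})^{-1} = \frac{1}{(1-t)(1-qt)}$, which is itself a repackaging of \eqref{eq:FF}; equivalently $\prod_{d\ge1}(1-t^d)^{-\Psi(d)} = \frac{1}{(1-t)(1-qt)}$ after separating the degree-one factor coming from the point at infinity. Then $\prod_{d\ge1}(1-q^dt^d)^{-\Psi(d)}$ is obtained from this by the substitution $t\mapsto qt$ combined with the observation that $\Psi(d)$ counts degree-$d$ irreducibles and $q^dt^d = (qt)^d$, but one must be careful: the substitution $t \mapsto qt$ in $\prod_{d}(1-t^d)^{-\Psi(d)}$ gives $\prod_d (1-q^d t^d)^{-\Psi(d)}$ directly, hence equals $\frac{1}{(1-qt)(1-q^2t)}$. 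Dividing the two product expressions gives $P(t) = \frac{(1-t)(1-qt)}{(1-qt)(1-q^2t)} = \frac{1-t}{1-q^2t}$, which... does not match. So one must instead be careful that the degree-one factor is $(1-t)^{\Psi(1)}$ with $\Psi(1) = q$, not $q+1$; the extra factor $(1-t)$ in $Z_{\mathbb{P}^1}$ comes from the point at infinity which is \emph{not} counted among the $\Psi(d)$ irreducible polynomials. I would therefore present the logarithmic-derivative argument of the first paragraph as the primary proof, since it uses \eqref{eq:FF} transparently and sidesteps these bookkeeping pitfalls.

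**The main obstacle is purely a matter of careful formal-power-series bookkeeping**, not any deep idea: one must ensure the manipulations of infinite products and sums are justified (they are, since for each fixed power $t^n$ only finitely many factors contribute, as $\Psi(d) = 0$ is false but $d\le n$ bounds the relevant indices), and one must correctly handle the constant-term normalization when exponentiating. I expect the write-up to be short — essentially the three displayed lines of the first paragraph, preceded by a sentence recalling $-\log(1-u)=\sum u^k/k$ and followed by a sentence noting that equality of logarithms of power series with constant term $1$ implies equality of the series themselves. No serious estimation or combinatorial identity beyond \eqref{eq:FF} is needed.
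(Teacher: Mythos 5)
Your primary argument (the first paragraph) is correct and is essentially identical to the paper's proof: take logarithms, expand via $-\log(1-u)=\sum_{k\ge1}u^k/k$, regroup by $n=dk$, and apply \eqref{eq:FF} to get $\sum_n\frac{q^{2n}-q^n}{n}t^n=\log\frac{1-qt}{1-q^2t}$. The second paragraph's zeta-function detour is not needed, and you correctly identify and abandon its bookkeeping pitfall before it causes harm.
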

	
	\begin{proof}
		Using \eqref{eq:FF}, we compute
		\begin{align*}
			\ln \left(\prod_{d =1}^{\infty}  \left( \frac{1  - t^{d} }{1- q^{d} t^{d}} \right)^{\Psi(d)} \right)
			&= \sum_{d=1}^\infty \Psi(d) \big( \ln (1  - t^{d}) -\ln (1- q^{d} t^{d}) \big)
			\\
			&= \sum_{d\ge 1} \Psi(d) \sum_{m\ge 1} \frac1{m} (q^{dm}-1) t^{dm}
			\\
			&\stackrel{(*)}{=} \sum_{n=1}^\infty \sum_{d |n} \Psi(d) d\cdot \frac{1}{n} (q^{n}-1) t^{n}
			\\
			&= \sum_{n=1}^\infty  \frac{1}{n} (q^{2n}-q^n) t^{n}
			= \ln \left( \frac{1-qt}{1-q^2t} \right),
		\end{align*}
		where the identity $(*)$ follows by a change of variables $n=dm$.
		The lemma follows.
	\end{proof}
	
	\begin{lemma}
		\label{lem:claim}
		For  $1\leq a\leq m$ and $\bn \in \N(\PL)$ with  with $||\bn||=m-a$, we have
		\begin{align*}
			\sum\limits_{ ||\bm||=m} \big|\Ext^1( S_{\bm}, \co(r) )_ {\co(r+a)\oplus S_{\bn}} \big| =q^{2a}-q^{2a-2}.
		\end{align*}
	\end{lemma}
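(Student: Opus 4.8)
The plan is to reduce the statement to Lemma~\ref{lem:aut} by analyzing the extensions $\Ext^1(S_\bm,\co(r))_{\co(r+a)\oplus S_\bn}$ explicitly. First I would record the general shape of a short exact sequence $0\to\co(r)\to E\to S_\bm\to 0$: since $\co(r)$ is a line bundle and $S_\bm$ is torsion with $||\bm||=m$, the middle term $E$ is a rank-one sheaf, hence of the form $\co(r')\oplus T$ with $T$ torsion; comparing ranks and degrees via \eqref{eq:Euler} forces $r'=r+a$ and $||T||=m-a$ for some $0\le a\le m$. So demanding the middle term to be $\co(r+a)\oplus S_\bn$ is exactly asking for the torsion part to be $S_\bn$ with $||\bn||=m-a$ (which is consistent with the hypothesis, once one notes $a\ge 1$ because $a=0$ would force $S_\bm\cong S_\bn$ as a summand, a split situation handled separately — but here we sum over all $\bm$ so the $a=0$ term is simply not present in the range $1\le a\le m$).

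Next I would use the block decomposition $\tor(\PL)=\bigoplus_x\tor_x(\PL)$ together with the orthogonality of distinct points to factor the count over closed points. Writing $\bm=(\bm_x)_x$ and $\bn=(\bn_x)_x$, an extension of $S_\bm$ by $\co(r)$ with prescribed middle term $\co(r+a)\oplus S_\bn$ decomposes, after pushing out along the $\co(r)\to\co(r+a)$ "multiplication by a polynomial" map, into independent local data at each $x$ with $\bm_x\ge 1$; the local contribution at $x$ is governed by $\Hom(S_x^{(\bm_x)},S_x^{(\cdot)})$-type spaces whose sizes are powers of $q_x$ as recorded in \eqref{hom:SS}–\eqref{ext:SS}. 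The key point is that, after summing over all $\bm$ with $||\bm||=m$ for fixed $\bn$ and fixed total $a$, the local counts collapse: at each point the choices of how $\co(r+a)$ meets the torsion combine to give a factor $q_x^{\bn_x}-q_x^{\bn_x-1}$ (the number of generators of a length-$\bn_x$ local module up to the relevant automorphisms), exactly matching the summand in Lemma~\ref{lem:aut}. Then $\sum_{||\bm||=m}|\Ext^1(S_\bm,\co(r))_{\co(r+a)\oplus S_\bn}| = \sum_{||\bn'||=a}\prod_{\bn'_x\ge 1}(q_x^{\bn'_x}-q_x^{\bn'_x-1}) = q^{2a}-q^{2a-2}$ by Lemma~\ref{lem:aut}, where $\bn'$ records the "missing" lengths $\bm_x-\bn_x$.

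Concretely, the cleanest route is probably to count via the Riedtmann–Peng formula \eqref{eq:RP}: the left-hand side equals
$|\Hom(S_\bm,\co(r))|\cdot\frac{|\aut(S_\bm)||\aut(\co(r))|}{|\aut(\co(r+a)\oplus S_\bn)|}\cdot F^{\,\co(r+a)\oplus S_\bn}_{S_\bm,\,\co(r)}$,
and since $\Hom(S_\bm,\co(r))=0$ (no maps from torsion to a bundle) this route instead forces one to work with $\Ext^1$ directly; so I would rather stick with the pushout analysis above. The main obstacle I anticipate is the bookkeeping at a single closed point $x$: one must verify that, for fixed $\bn_x$, summing $|\Ext^1(S_x^{(\bm_x)},\co(r))_{\co(r+a_x)\oplus S_x^{(\bn_x)}}|$-type local counts over the allowable local lengths $\bm_x$ with $a_x$ contributing to the global $a$ yields exactly $q_x^{\bn_x}-q_x^{\bn_x-1}$, using the uniserial structure of $S_x^{(n)}$ and the explicit extension \eqref{ext:SS}. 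Once the local identity is in hand, the multiplicativity over $x$ and the generating-function identity of Lemma~\ref{lem:aut} finish the proof immediately.
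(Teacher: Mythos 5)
Your plan correctly identifies the target reduction (to Lemma~\ref{lem:aut}, via the ``missing lengths'' $\bm_x-\bn_x$ summing to $a$), and your analysis of the possible middle terms is fine. But there are two genuine problems. First, you discard the Riedtmann--Peng route for a wrong reason: $\Hom(S_\bm,\co(r))$ is the zero \emph{space}, so $|\Hom(S_\bm,\co(r))|=1$, not $0$, and formula \eqref{eq:RP} applies without any difficulty -- indeed it simplifies, giving $|\Ext^1(S_\bm,\co(r))_{\co(r+a)\oplus S_\bn}| = F^{\co(r+a)\oplus S_\bn}_{S_\bm,\,\co(r)}\cdot \frac{|\aut(\co(r))|\,|\aut(S_\bm)|}{|\aut(\co(r+a)\oplus S_\bn)|}$. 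This is exactly the paper's approach: one then counts surjections $f=(f_1,f_2):\co(r+a)\oplus S_\bn\twoheadrightarrow S_\bm$ with $\ker f\cong\co(r)$, which forces $f_2: S_\bn\to S_\bm$ to be injective (contributing $|\aut(S_\bn)|$ in total) and $f_1:\co(r+a)\to S_x^{(\bm_x)}$ to be onto wherever $\bn_x\neq\bm_x$ (contributing $q_x^{\bm_x}-q_x^{\bm_x-1}$, else $q_x^{\bn_x}$); after dividing by $|\aut(S_\bn)|\cdot|\Hom(\co(r+a),S_\bn)|=|\aut(S_\bn)|\cdot q^{m-a}$ one lands exactly on $\sum_{\sum_x\bl_xd_x=a}\prod_{\bl_x\neq 0}(q_x^{\bl_x}-q_x^{\bl_x-1})$ and hence on Lemma~\ref{lem:aut}. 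By abandoning this route you deprive yourself of the cleanest (and the paper's actual) argument.

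Second, the alternative ``pushout analysis'' you substitute is not carried out: the entire content of the lemma is the local count at each closed point, and you explicitly flag this as ``the main obstacle I anticipate'' rather than resolving it. Moreover your intermediate claim that the local factor is $q_x^{\bn_x}-q_x^{\bn_x-1}$ (indexed by the length $\bn_x$ of the prescribed torsion summand of the middle term) is inconsistent with your own final formula, where the factor must be $q_x^{\bn'_x}-q_x^{\bn'_x-1}$ with $\bn'_x=\bm_x-\bn_x$ the missing length; this sign that the bookkeeping has not been done is precisely where the proof lives. As it stands the proposal is a correct guess at the shape of the answer together with an unexecuted strategy, not a proof.
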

	
	\begin{proof}
		If $\big|\Ext^1( S_{\bm}, \co(r) )_ {\co(r+a)\oplus S_{\bn}}\big|\neq0$, then $\bn_x\leq \bm_x$, and so $\bl_x := \bm_x -\bn_x \geq0$, for all $x$; cf. \eqref{eq:mn}. By the Riedtman-Peng formula \eqref{eq:RP}, we have
		\begin{align*}
			&\sum\limits_{ ||\bm||=m} \big|\Ext^1( S_{\bm}, \co(r) )_ {\co(r+a)\oplus S_{\bn}}\big|
			\\
			&=\sum\limits_{ ||\bm||=m}F_{ S_{\bm}, \co(r) }^{\co(r+a)\oplus S_{\bn} } \frac{|\Aut(\co(r))|\cdot |\Aut( S_{\bm})|}{|\Aut(\co(r+a)\oplus S_{\bn} )|}
			\\
			&=\frac{\sum\limits_{ ||\bm||=m} \Big|\{ \co(r+a)\oplus  S_{\bn} \stackrel{f}{\twoheadrightarrow} S_{\bm}\mid \ker f\cong \co(r)\}\Big|}{|\Aut(  S_{\bn})|\cdot|\Hom(\co(r+a), S_{\bn} )|}
			\\
			&=\frac{\sum\limits_{ ||\bm||=m} \Big|\{ \co(r+a)\oplus  S_{\bn} \stackrel{f}{\twoheadrightarrow} S_{\bm}\mid \ker f\cong \co(r)\}\Big|}{|\Aut(  S_{\bn})|\cdot q^{m-a}}.
		\end{align*}
		
		Note that
		\begin{align*}
			&\big|\{ \co  (r+a)\oplus  S_{\bn} \stackrel{f}{\twoheadrightarrow} S_{\bm}\mid \ker f\cong \co(r)\}\big|
			\\
			&=\big|\{(f_{1,x}, f_{2,x})\mid f_{1,x}: \co(r+a)\rightarrow S_{x}^{(\bm_x)} \text{ is onto} \text{ if } \bn_x\neq \bm_x  ,\\
			&\qquad\qquad\qquad f_{2,x}: S_{x}^{(\bn_x)} \rightarrow S_{x}^{(\bm_x)}  \text{ is injective} , \forall x\in \PL \}\big|
			\\
			&= \big|\{f_{1,x}\mid f_{1,x}: \co(r+a)\rightarrow S_{x}^{(\bm_x)} \text{ is onto} \text{ if } \bn_x\neq \bm_x, \forall x\in \PL \}\big|
			\\
			&\quad\cdot \big|\{ f_{2,x}\mid f_{2,x}: S_{x}^{(\bn_x)} \rightarrow S_{x}^{(\bm_x)}  \text{ is injective} , \forall x\in \PL  \}\big|
			\\
			&=\big|\{f_{1,x}\mid f_{1,x}: \co(r+a)\rightarrow S_{x}^{(\bm_x)} \text{ is onto} \text{ if } \bn_x\neq \bm_x, \forall x\in \PL \}\big|\cdot |\Aut(  S_{\bn})|.
		\end{align*}
		So by Lemma \ref{lem:aut}, we have
		\begin{align*}
			&\sum\limits_{ ||\bm||=m}  \big|\Ext^1( S_{\bm}, \co(r) )_ {\co(r+a)\oplus S_{\bn}} \big |
			\\
			&=q^{a-m}\sum\limits_{ ||\bm||=m} \big|\{f_{1,x}: \co(r+a)\rightarrow S_{x}^{(\bm_x)} \mid f_{1,x}\text{ is onto} \text{ if } \bn_x\neq \bm_x\} \big|
			\\
			&=
			q^{a-m}\sum\limits_{\sum\limits_{x}\bl_xd_x=a} \big|\{f_{1,x}: \co(r+a)\rightarrow S_{x}^{(\bm_x)} \mid f_{1,x}\text{ is onto} \text{ if } \bn_x\neq \bm_x\} \big|
			\\
			&=q^{a-m}\sum\limits_{\sum\limits_{x}\bl_xd_x=a} \Big(\prod_{\bl_x\neq 0} (q_x^{ \bl_x+\bn_x} -q_x^{ \bl_x+\bn_x-1} )\cdot \prod_{\bl_x=0} q_x^{\bn_x}\Big)
			\\
			&=\sum\limits_{\sum\limits_{x}\bl_xd_x=a} \prod_{\bl_x\neq 0} (q_x^{ \bl_x} -q_x^{ \bl_x-1} )
			=q^{2a}-q^{2a-2}.
		\end{align*}
		The lemma is proved.
	\end{proof}

	\begin{proposition}
		\label{prop:ThetaO}
		For $r\in \Z$ and $m \ge 1$, we have
		\begin{align}
			\label{prod ThetaO}
			\haT_{m}*[\co(r)]
			=&\frac{1}{(\sqq^2-1)\sqq^{2m-1}}\sum\limits_{||\bm||=m} [\co(r)\oplus  S_{\bm}]\\\notag
			&+\frac{[2]}{\sqq^{2m+2}}\sum\limits_{1\leq a\leq m} \sum\limits_{ ||\bn||=m-a} \sqq^{4a}{[\co(r+a)\oplus  S_{\bn}]}.
		\end{align}
	\end{proposition}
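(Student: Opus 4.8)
The plan is to compute, for each $\bm$ with $||\bm||=m$, the product $[S_\bm]*[\co(r)]$ and then sum, using $\haT_m=\tfrac{1}{(q-1)\sqq^{m-1}}\sum_{||\bm||=m}[S_\bm]$ from \eqref{def:Theta}. The first and key step is to reduce this product, which a priori lives in $\calc_1(\coh(\PL))$, to a computation of extensions inside $\coh(\PL)$. Since there are no nonzero maps from a torsion sheaf to a line bundle, $\Hom_{\coh}(S_\bm,\co(r))=0$, so $\Hom_{\calc_1}(C_{S_\bm},C_{\co(r)})=0$ and, by Lemma~\ref{lem:Ext=Ext+Hom}, $\Ext^1_{\calc_1}(C_{S_\bm},C_{\co(r)})\cong\Ext^1_{\coh}(S_\bm,\co(r))$. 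I would then observe that in any extension $0\to C_{\co(r)}\to L^\bullet\to C_{S_\bm}\to 0$ the differential $d_L$ vanishes on the subobject $\co(r)$ and induces $0$ on the quotient $S_\bm$, so it factors through a morphism $S_\bm\to\co(r)$, which must be zero; hence $L^\bullet=C_L$ with $0\to\co(r)\to L\to S_\bm\to 0$ exact in $\coh(\PL)$. Together with the twisting exponent $\langle\res C_{S_\bm},\res C_{\co(r)}\rangle_{\coh}=\langle S_\bm,\co(r)\rangle=-m$ (from \eqref{eq:Euler}, or from Serre duality with $\Hom(S_\bm,\co(r))=0$) and the definition \eqref{eq:tH}, this gives
\[
[S_\bm]*[\co(r)]=\sqq^{-m}\sum_{[E]}\bigl|\Ext^1_{\coh}(S_\bm,\co(r))_E\bigr|\,[E],
\]
a sum over isoclasses of coherent sheaves, with no $[K_\alpha]$-terms appearing.

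The second step is to classify the middle terms $E$. Each has rank $1$ and degree $r+m$, so by the split torsion pair $(\tor(\PL),\vec(\PL))$ it decomposes as $E\cong\co(r+a)\oplus S_\bn$ with $||\bn||=m-a$; the inclusion $\co(r)\hookrightarrow E$ forces $a\ge 0$ and $||\bn||\ge 0$ forces $a\le m$. When $a=0$ the torsion part of $E$ embeds into $S_\bm$ with equal $\bfk$-dimension, so $\bn=\bm$, and the composite $\co(r)\hookrightarrow E\twoheadrightarrow\co(r)$ is a nonzero, hence invertible, endomorphism of $\co(r)$, splitting the sequence; thus $\bigl|\Ext^1(S_\bm,\co(r))_{\co(r)\oplus S_\bm}\bigr|=1$. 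For $1\le a\le m$, summing $\bigl|\Ext^1(S_\bm,\co(r))_{\co(r+a)\oplus S_\bn}\bigr|$ over all $\bm$ with $||\bm||=m$ is exactly Lemma~\ref{lem:claim}, which evaluates to $q^{2a}-q^{2a-2}$.

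Assembling, I would sum over $\bm$, interchange the order of summation, and apply the above to obtain
\[
\sum_{||\bm||=m}[S_\bm]*[\co(r)]=\sqq^{-m}\!\!\sum_{||\bm||=m}[\co(r)\oplus S_\bm]+\sqq^{-m}\sum_{a=1}^{m}(q^{2a}-q^{2a-2})\!\!\sum_{||\bn||=m-a}[\co(r+a)\oplus S_\bn],
\]
and then multiply by $\tfrac{1}{(q-1)\sqq^{m-1}}$; simplifying the scalars via $q^{2a}-q^{2a-2}=q^{2a-2}(q-1)(q+1)$ and $q+1=\sqq\,[2]$ gives exactly \eqref{prod ThetaO}. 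The substantial input is Lemma~\ref{lem:claim} (and, behind it, the generating-function identity of Lemma~\ref{lem:aut2}); within the argument proper the only delicate points are the reduction to $\coh(\PL)$ in the first step — checking that every middle complex is a stalk complex, so no acyclic $[K_\alpha]$-summands intervene — and the $a=0$ case, namely that the split extension is the unique one with middle term $\co(r)\oplus S_\bm$.
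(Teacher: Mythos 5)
Your proposal is correct and follows essentially the same route as the paper's proof: reduce the product to extensions $0\to\co(r)\to E\to S_{\bm}\to 0$ in $\coh(\PL)$ (using $\Hom(S_{\bm},\co(r))=0$ and the Euler-form twist $\sqq^{-m}$), classify the middle terms as $\co(r+a)\oplus S_{\bn}$, and invoke Lemma~\ref{lem:claim} for $1\le a\le m$. Your explicit justifications of the two points the paper leaves implicit --- that every middle complex is a stalk complex and that the split middle term occurs only for the trivial extension class --- are both correct.
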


	\begin{proof}
		Since $\big \langle S_{\bm} ,\co(r) \big \rangle=-m$ for any $\bm\in \mathbb N (\PL)$ with $||\bm||=m$, we have
		\begin{align*}
			\haT_{m}*[\co(r)]
			&= \frac{1}{(q-1)\sqq^{m-1}}\sum\limits_{||\bm||=m}
			[S_{\bm}]*[\co(r)]
			\\
			&= \frac{1}{(\sqq^2-1)\sqq^{2m-1}}\sum\limits_{ ||\bm||=m} [S_{\bm}]\diamond[\co(r)].
		\end{align*}
		For any short exact sequence $0\rightarrow \co(r) \rightarrow M\rightarrow  S_{\bm}\rightarrow0$ with $||\bm||=m$, we have
		$$M\cong \co(r)\oplus S_{\bm}, \quad \text{ or }M\cong \co(r+a)\oplus  S_{\bn}, $$
		for some $\bn$ such that $||\bn||=m-a$ and $1\leq a\leq m$. It follows that
		\begin{align*}
			\haT_{m}*[\co(r)]
			=&\frac{1}{(\sqq^2-1)\sqq^{2m-1}} \sum\limits_{ ||\bm||=m} [\co(r)\oplus  S_{\bm}]
			+ \frac{1}{(\sqq^2-1)\sqq^{2m-1}} \sum\limits_{ ||\bm||=m} \sum_{1\leq a\leq r} \\
			&\sum\limits_{ ||\bn||=m-a}
			|\Ext^1( S_{\bm}, \co(r) )_ {\co(r+a)\oplus S_{\bn}}|[\co(r+a)\oplus  S_{\bn}]
			\\
			=&\frac{1}{(\sqq^2-1)\sqq^{2m-1}} \sum\limits_{ ||\bm||=m} [\co(r)\oplus  S_{\bm}]+\frac{1}{(\sqq^2-1)\sqq^{2m-1}} \sum\limits_{1\leq a\leq r}\sum\limits_{ ||\bn||=m-a}
			\\
			&\Big(  \sum\limits_{ ||\bm||=m}|\Ext^1( S_{\bm}, \co(r) )_ {\co(r+a)\oplus S_{\bn}}| \Big)
			{[\co(r+a)\oplus S_{\bn}]}.
		\end{align*}
		Now the proposition follows from applying Lemma~\ref{lem:claim}.
	\end{proof}

	%
	\subsubsection{The product $[\co(r)]*\haT_m$}
	
	\begin{lemma}
		\label{lem:morOS}
		For any morphism $f:\co(r)\rightarrow  S_{\bn}$, there exists $\ba \in \N(\PL)$ with $\ba\leq \bn$ such that
		$$\Im f\cong  S_{\ba}, \quad \coker f\cong S_{\bn-\ba}
		{\quad and \quad}\ker f\cong \co(r-||\ba||).$$
		Moreover, given a surjective morphism $g: \co(r)\rightarrow \Im f$, there exists a unique morphism $f_1: \Im f\rightarrow S_{\bn}$ such that $f=f_1\circ g$.
	\end{lemma}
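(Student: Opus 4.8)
The plan is to reduce the whole statement to the local structure of $f$ at the finitely many closed points in the support of $S_\bn$, using that each stalk $\co_x$ is a discrete valuation ring and that line bundles on $\PL$ are classified by their degree.

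For the existence of $\ba$ together with the three identities, I would localize $f$ at a closed point $x$ with $\bn_x>0$: the stalk $f_x$ is an $\co_x$-linear map $\co(r)_x\cong\co_x\to (S_\bn)_x\cong\co_x/\mathfrak{m}_x^{\bn_x}$. Over the DVR $\co_x$ the submodules of $\co_x/\mathfrak{m}_x^{\bn_x}$ are precisely $\mathfrak{m}_x^{j}/\mathfrak{m}_x^{\bn_x}\cong\co_x/\mathfrak{m}_x^{\bn_x-j}$ for $0\le j\le\bn_x$, so $\Im(f_x)\cong\co_x/\mathfrak{m}_x^{\ba_x}$ for a unique $\ba_x$ with $0\le\ba_x\le\bn_x$; consequently $(\coker f)_x\cong\co_x/\mathfrak{m}_x^{\bn_x-\ba_x}$, and — since any surjection onto a cyclic $\co_x$-module has kernel the annihilator of a generator, which is intrinsic to the module — $\ker(f_x)=\mathfrak{m}_x^{\ba_x}\co(r)_x$. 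Assembling over $\PL$, and using that the torsion blocks $\tor_x(\PL)$ are mutually orthogonal so that $\Im f$ and $\coker f$ decompose along closed points, gives $\Im f\cong S_\ba$ and $\coker f\cong S_{\bn-\ba}$ with $\ba=(\ba_x)_x\le\bn$, while $\ker f=\co(r)\otimes\mathcal{I}$ for the ideal sheaf $\mathcal{I}=\prod_x\mathfrak{m}_x^{\ba_x}$ of colength $||\ba||$; as $\mathcal{I}$ is invertible of degree $-||\ba||$ and line bundles on $\PL$ are determined by degree, $\ker f\cong\co(r-||\ba||)$.

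For the final assertion, write $f=\iota\circ\bar f$, where $\bar f:\co(r)\to\Im f$ is the corestriction of $f$ and $\iota:\Im f\hookrightarrow S_\bn$ the inclusion, and let $g:\co(r)\to\Im f$ be an arbitrary surjection. Repeating the local computation, at every closed point both $\ker(g_x)$ and $\ker(\bar f_x)$ equal $\mathfrak{m}_x^{\ba_x}\co(r)_x$, since each is the kernel of a surjection onto the same cyclic module $(\Im f)_x$ and that kernel depends only on the module. Hence $\ker g=\ker\bar f=\ker f$ as subsheaves of $\co(r)$; since $g$ is an epimorphism it is the cokernel of $\ker g\hookrightarrow\co(r)$, and because $f$ kills $\ker g$ the universal property of the cokernel yields the unique $f_1:\Im f\to S_\bn$ with $f=f_1\circ g$ (explicitly, $f_1$ is $\iota$ composed with the inverse of the isomorphism $\co(r)/\ker g\xrightarrow{\sim}\Im f$ induced by $g$).

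The only point that is not quite automatic is the observation that a surjection from $\co(r)$ onto a torsion sheaf has kernel depending only on its image — equivalently, that the kernel of an $\co_x$-linear surjection onto a cyclic module is intrinsic to that module. Once this is in place, everything follows from the DVR structure of the stalks and the structure of $\coh(\PL)$ recalled in Section~\ref{sec:HallPL}, with essentially no computation, and the proof should be short.
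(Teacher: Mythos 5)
Your proof is correct and follows essentially the same route as the paper: the paper phrases the local analysis via uniseriality of $S_x^{(n)}$ and orthogonality of the blocks $\tor_x(\PL)$ rather than via stalks over the DVRs $\co_x$, and identifies $\ker f$ by noting that a subsheaf of a line bundle is again a line bundle determined by its degree, but these are the same facts in different clothing. The one place you go beyond the paper is the factorization statement, where the paper only writes ``follows from the surjectivity of $g$'': your observation that every surjection from $\co(r)_x$ onto a fixed cyclic $\co_x$-module has the same (intrinsic) kernel, so that $\ker g=\ker f$ as subsheaves, is precisely what is needed for the \emph{existence} of $f_1$, surjectivity of $g$ alone giving only uniqueness.
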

	
	\begin{proof}
		For a closed point $x$ in $\PL$, the torsion sheaf $S_{x}^{(n)}$ is uniserial, and hence its subsheaf and quotient sheaf are of the form $S_{x}^{(a)}$, for $0\leq a\leq n$. Recall there is no nonzero homomorphism between torsion sheaves supported on distinct closed points. Therefore,  as a subsheaf of $ S_{\bn}$, $\Im f=S_{\ba}$, for some $\ba\leq \bn$.
		It follows that $\coker f\cong S_{\bn-\ba}$. Observe that any subsheaf of a line bundle is again a line bundle, which is determined by its degree. Hence $\ker f\cong \co(r-||\ba||)$.
		The second statement follows from the surjectivity of $g$.
	\end{proof}

	\begin{proposition}
		\label{prop:OTheta}
		For $r\in \Z$ and $m \ge 1$, we have  
		\begin{align*}
			[\co(r)]*\haT_m
			=&\frac{1}{(q-1)\sqq^{2m-1}}\sum\limits_{ ||\bm||=m}[\co(r)\oplus S_{\bm}]\\
			&+ \frac{[2]}{\sqq^{2m+2}}\sum\limits_{1\leq a\leq m}\sum\limits_{ ||\bn||=m-a}
			\sqq^{4a}[\co(r-a)\oplus  S_{\bn}]*[K_{a\de}].
		\end{align*}
	\end{proposition}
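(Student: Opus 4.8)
The plan is to compute $[\co(r)]*\haT_m$ in close parallel with the computation of $\haT_m*[\co(r)]$ in Proposition~\ref{prop:ThetaO}, the key difference being that, with the torsion factor now on the right, the extensions in $\calc_1(\coh(\PL))$ that enter the Hall product are governed by homomorphisms rather than by honest extensions of coherent sheaves. First I would unfold the product: since $\haT_m=\frac{1}{(q-1)\sqq^{m-1}}\sum_{||\bm||=m}[S_{\bm}]$, it suffices to analyse each $[\co(r)]*[S_{\bm}]=\sqq^{\langle\co(r),S_{\bm}\rangle_\ca}[\co(r)]\diamond[S_{\bm}]$. Here $\langle\co(r),S_{\bm}\rangle_\ca=\dim_\bfk\Hom(\co(r),S_{\bm})=||\bm||=m$, because $\Ext^1(\co(r),S_{\bm})\cong{\rm D}\Hom(S_{\bm},\co(r-2))=0$ (a torsion sheaf admits no maps to a line bundle), and by Lemma~\ref{lem:Ext=Ext+Hom} we get $\Ext^1_{\calc_1}(C_{\co(r)},C_{S_{\bm}})\cong\Ext^1_\ca(\co(r),S_{\bm})\oplus\Hom_\ca(\co(r),S_{\bm})=\Hom_\ca(\co(r),S_{\bm})$. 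Thus every extension of $C_{\co(r)}$ by $C_{S_{\bm}}$ is a complex $C_f$ attached to a morphism $f\colon\co(r)\to S_{\bm}$ (its underlying object splits and its differential is $f$), and since $|\Hom_{\calc_1}(C_{\co(r)},C_{S_{\bm}})|=q^m=|\Ext^1_{\calc_1}(C_{\co(r)},C_{S_{\bm}})|$ this yields
\[
[\co(r)]*\haT_m=\frac{1}{(q-1)\sqq^{2m-1}}\sum_{||\bm||=m}\ \sum_{f\colon\co(r)\to S_{\bm}}[C_f].
\]

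Next I would evaluate each $[C_f]$ by combining Lemma~\ref{lem:Cf} with Lemma~\ref{lem:morOS}: if $\Im f\cong S_{\ba}$ with $\ba\le\bm$ and $a:=||\ba||$, then $\ker f\cong\co(r-a)$, $\coker f\cong S_{\bm-\ba}$, and $\widehat{\Im f}=\widehat{S_{\ba}}=a\delta$, so $[C_f]=[\co(r-a)\oplus S_{\bm-\ba}]*[K_{a\de}]$. The contribution of $f=0$ (where $\ba=0$) is exactly the first summand $\frac{1}{(q-1)\sqq^{2m-1}}\sum_{||\bm||=m}[\co(r)\oplus S_{\bm}]$ of the asserted identity. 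Collecting the remaining terms by $a\ge1$ and by $\bn:=\bm-\ba$ (so $||\bn||=m-a$), the coefficient of $[\co(r-a)\oplus S_{\bn}]*[K_{a\de}]$ equals $\frac{1}{(q-1)\sqq^{2m-1}}$ times the integer
\[
\sum_{\substack{\ba\in\N(\PL)\\ ||\ba||=a}}\#\bigl\{\,f\colon\co(r)\to S_{\bn+\ba}\ \big|\ \Im f=S_{\ba}\,\bigr\},
\]
and by the last assertion of Lemma~\ref{lem:morOS} (fixing a surjection onto the uniquely determined subsheaf $S_{\ba}\subseteq S_{\bn+\ba}$) this count equals $\sum_{||\ba||=a}\#\{\text{epimorphisms }\co(r)\twoheadrightarrow S_{\ba}\}$.

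The hard part is this last count; everything else is bookkeeping. Componentwise, an epimorphism $\co(r)\to S_x^{(\ba_x)}$ amounts to the choice of a generator (a ``unit'') of the cyclic $\co_x$-module $S_x^{(\ba_x)}$, of which there are $q_x^{\ba_x}-q_x^{\ba_x-1}$ — precisely the count already used in the proof of Lemma~\ref{lem:claim}. Hence $\#\{\text{epimorphisms }\co(r)\twoheadrightarrow S_{\ba}\}=\prod_{\ba_x\ge1}(q_x^{\ba_x}-q_x^{\ba_x-1})$, and then Lemma~\ref{lem:aut} gives $\sum_{||\ba||=a}\prod_{\ba_x\ge1}(q_x^{\ba_x}-q_x^{\ba_x-1})=q^{2a}-q^{2a-2}$. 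Substituting, the coefficient of $[\co(r-a)\oplus S_{\bn}]*[K_{a\de}]$ becomes $\frac{q^{2a}-q^{2a-2}}{(q-1)\sqq^{2m-1}}=(q+1)\sqq^{4a-2m-3}$, and since at $v=\sqq$ one has $[2]=\sqq+\sqq^{-1}=(q+1)/\sqq$, this is exactly $\frac{[2]}{\sqq^{2m+2}}\sqq^{4a}$, which is the stated formula. Beyond that combinatorial input, the only points to watch are the correct power $\langle\co(r),S_{\bm}\rangle_\ca=m$ of the twist and the placement of the central factors $[K_{a\de}]$ (central in $\tMHX$, hence freely movable to the right as written).
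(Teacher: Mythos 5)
Your proposal is correct and follows essentially the same route as the paper: expand $\haT_m$, identify the middle terms of $[\co(r)]\diamond[S_{\bm}]$ with the complexes $C_f$ for $f\colon\co(r)\to S_{\bm}$, evaluate $[C_f]=[\co(r-a)\oplus S_{\bm-\ba}]*[K_{a\de}]$ via Lemmas~\ref{lem:Cf} and \ref{lem:morOS}, and sum the counts with Lemma~\ref{lem:aut}. The only (immaterial) difference is that you count epimorphisms $\co(r)\twoheadrightarrow S_{\ba}$ where the paper counts injections $S_{\bl}\rightarrowtail S_{\bm}$ after fixing a surjection; by uniseriality these counts coincide.
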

	
	\begin{proof}
		For $\bm \in\N (\bbP^1_{\bfk})$ with $||\bm||=m$, we have $\langle \co(r), S_{\bm}\rangle=m$. For any $f\in\Hom(\co(r), S_{\bm})$, we have $\Im f\cong S_{\bl}$ for some $\bl\leq \bm$ with $||\bl||=a$. Then $\ker f\cong \co(r-a)$ and $\coker f\cong S_{\bm-\bl}$. Hence
		\begin{align} \label{eq:Cf}
			[C_f]=[\coker f\oplus \ker f]*[K_{\Im f}]=[\co(r-a)\oplus  S_{\bm-\bl}]*[K_{a\de}].
		\end{align}
		By Lemma \ref{lem:morOS}, we have
		\begin{align}   \label{eq:ff}
			\big|\{ f:\co(r)\rightarrow S_{\bm} \mid \Im f\cong S_{\bl}\} \big|
			=\big|\{ f_1:S_{\bl}\rightarrowtail S_{\bm}\} \big|
			=\prod_{\bl_x\neq 0} \big(q_x^{ \bl_x} -q_x^{ \bl_x-1} \big).
		\end{align}
		Thus using \eqref{eq:Cf}--\eqref{eq:ff} we have
		\begin{align*}
			[\co(r)]*S_{\bm}&=\sqq^{\langle \co(r), S_{\bm}\rangle}\sum\limits_{f\in\Hom(\co(r), S_{\bm})}
			\frac{[C_f]}{|\Hom(\co(r), S_{\bm})|}
			\\
			&=\sqq^{-m}\sum\limits_{0\leq a\leq m}\sum\limits_{\bl\leq \bm;||\bl||=a}\sum\limits_{f;\Im f\cong S_{\bl}}[\co(r-a)\oplus  S_{\bm-\bl}]*[K_{a\de}]
			\\
			&=\sqq^{-m}\sum\limits_{0\leq a\leq m}\sum\limits_{\bl\leq \bm;||\bl||=a}\prod\limits_{\bl_x\geq 1} (q_x^{ \bl_x} -q_x^{ \bl_x-1} )
			[\co(r-a)\oplus  S_{\bm-\bl}]*[K_{a\de}].
		\end{align*}
		Therefore, by Lemma~\ref{lem:aut} we have
		\begin{align*}
			[\co(r)]*\haT_m
			=&\frac{1}{(q-1)\sqq^{m-1}}\sum\limits_{ ||\bm||=m}
			[\co(r)]*[ S_{\bm}]
			\\
			=& \frac{1}{(q-1)\sqq^{2m-1}}\sum\limits_{0\leq a\leq m}\sum\limits_{ ||\bn||=m-a}\sum\limits_{||\bl||=a}\prod_{\bl_x\geq 1} (q_x^{ \bl_x} -q_x^{ \bl_x-1} )
			[\co(r-a)\oplus  S_{\bn}]*[K_{a\de}]\\
			=&\frac{1}{(q-1)\sqq^{2m-1}}\sum\limits_{0\leq a\leq m}\sum\limits_{ ||\bn||=m-a}(q^{2a} -q^{2a-2})[\co(r-a)\oplus  S_{\bn}]*[K_{a\de}]\\
			=&\frac{1}{(q-1)\sqq^{2m-1}}\sum\limits_{ ||\bm||=m}[\co(r)\oplus S_{\bm}]
			\\
			&\qquad + \frac{[2]}{\sqq^{2m+2}}\sum\limits_{1\leq a\leq m}\sum\limits_{ ||\bn||=m-a}
			\sqq^{4a}[\co(r-a)\oplus  S_{\bn}]*[K_{a\de}].
		\end{align*}
		The proposition is proved.
	\end{proof}
	
	%
	
	Now we are ready to prove Proposition \ref{prop:ThetaTheta}, i.e., the identity \eqref{TOTO}.
	
	\begin{proof}[Proof of Proposition \ref{prop:ThetaTheta}]
		%
		%
		Recall that $\haT_0=\frac{1}{\sqq-\sqq^{-1}}$ and $\haT_{-1}=0$.
		A direct computation using Proposition~ \ref{prop:ThetaO} and Proposition~ \ref{prop:OTheta} shows that $\text{LHS } \eqref{TOTO} - \text{RHS } \eqref{TOTO} =0$.
	\end{proof}

	\section{Derived equivalence and two presentations} 
	\label{sec:derived}
	
	Let $\LaK^\imath$ be the $\imath$quiver algebra of the Kronecker $\imath$quiver $(\QK,\Id)$; see \eqref{eq:iquiver}.
	In this section, we establish a derived equivalence between categories $\rep(\LaK^\imath)$ and $\calc_1(\coh(\P^1_\bfk))$, inducing an isomorphism of the $q$-Onsager algebra in the Serre type and the Drinfeld type presentations.
	Consequently, the homomorphism $\Omega$ from the $q$-Onsager algebra to the $\imath$Hall algebra of the projective line is injective.

	\subsection{Reflection functors}
	
	Let $(Q, \varrho)$ be a (connected) acyclic $\imath$quiver of rank $\ge 2$.
	We recall the reflection functors for the $\imath$quiver algebra $\Lambda^\imath$ associated to a split $\imath$quiver $(Q,\Id)$ from \cite{LW21a}.
	For any sink $\ell \in Q_0$, define the quiver $s_\ell (Q)$ by reversing all the arrows ending at $\ell $.
	Let $\Lambda'^\imath =\bs^+_\ell \Lambda^{\imath}$ be the $\imath$quiver algebra of the split $\imath$quiver $(Q',\Id)$.
	The quiver $\ov{Q'}$ of $\bs^+_\ell \Lambda^{\imath}$ can be constructed from $\ov{Q}$ by reversing all the arrows of $Q$ ending at $\ell $.
	Dually, we can define $\bs^{-}_\ell\Lambda^\imath$ for any source $\ell$ by reversing all the arrows of $Q$ starting from $\ell $. 
	
	A {\rm reflection functor} associated to a sink $\ell \in Q_0$ is defined in \cite[\S3.2]{LW21a}
	\begin{align}  \label{eq:Fl}
		F_\ell ^+: \rep(\Lambda^\imath) \stackrel{\text{def}}{=} \rep(\ov{Q},\ov{I}) \longrightarrow \rep(\bs^+_\ell \Lambda^\imath) \stackrel{\text{def}}{=} \rep(\ov{Q'},\ov{I'}).
	\end{align}
	Dually, a reflection functor $F_\ell^-$ associated to a source $\ell\in Q_0$ is also defined. In fact, the action of $F_\ell ^+$ (and $F_\ell^-$) on $\rep_\bfk( Q)\subseteq \rep(\Lambda^\imath)$ is the same as that of the classic BGP reflection functor.

	\subsection{The Kronecker $\imath$quiver}
	
	Consider the Kronecker quiver $\QK: \xymatrix{0\ar@<0.5ex>[r]^\alpha \ar@<-0.5ex>[r]_\beta& 1}$. Denote by $A^t$ the transpose of a matrix $A$.
	
	\begin{lemma}[see e.g. \text{\cite[Section 3.2]{Rin84}}]
		\begin{enumerate}
			\item
			Indecomposable pre-projective $\bfk \QK$-modules (up to isomorphisms) are given by:
			\[
			P_n=\xymatrix{\bfk^n\ar@<0.5ex>[rr]^{A_n^{\rm pro}} \ar@<-0.5ex>[rr]_{B_n^{\rm pro}}&& \bfk^{n+1}}, \quad \forall n\geq0,
			\]
			where
			\[
			A_n^{\rm pro} = \left( \begin{array}{cccc}
				1&0&\cdots &0 \\
				0&1&\cdots &0 \\
				\vdots&\vdots&\ddots &\vdots\\
				0&0&\cdots&1\\
				0&0&\cdots&0
			\end{array} \right) \text{ and } B_n^{\rm pro} = \left( \begin{array}{cccc}
				0&0&\cdots&0\\
				1&0&\cdots &0 \\
				0&1&\cdots &0 \\
				\vdots&\vdots&\ddots &\vdots\\
				0&0&\cdots&1
			\end{array} \right).
			\]
			
			\item
			Indecomposable pre-injective $\bfk \QK$-modules (up to isomorphisms) are given by:
			\[
			I_n=\xymatrix{\bfk^{n+1}\ar@<0.5ex>[rr]^{(A_n^{\rm pro})^t} \ar@<-0.5ex>[rr]_{(B_n^{\rm pro})^t}&& \bfk^{n}}, \quad \forall n\geq0.
			\]
		\end{enumerate}
		%
	\end{lemma}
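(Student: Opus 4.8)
The plan is to deduce this from the classical representation theory of the tame hereditary Kronecker algebra $\bfk\QK$, exactly as in \cite[Section 3.2]{Rin84}. Recall that the dimension vectors of the indecomposable $\bfk\QK$-modules are precisely the positive roots of the affine root system of type $\widetilde{A}_1$ attached to $\QK$, namely the real roots $(n,n+1)$ and $(n+1,n)$ for $n\geq 0$ together with the imaginary roots $(n,n)$ for $n\geq 1$; moreover each real root is realized by a \emph{unique} indecomposable, and the indecomposables of dimension vector $(n,n+1)$ (resp. $(n+1,n)$) are exactly the preprojective (resp. preinjective) ones. Granting this, it suffices to check two things for the explicitly defined modules: first, that $P_n$ (resp. $I_n$) is indecomposable with dimension vector $(n,n+1)$ (resp. $(n+1,n)$); second, that it is preprojective (resp. preinjective).

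For indecomposability I would compute the endomorphism ring directly. An endomorphism of $P_n$ is a pair of square matrices $(\phi_0,\phi_1)$ of sizes $n$ and $n+1$ satisfying $\phi_1 A_n^{\rm pro}=A_n^{\rm pro}\phi_0$ and $\phi_1 B_n^{\rm pro}=B_n^{\rm pro}\phi_0$. Since $A_n^{\rm pro}$ identifies $\bfk^n$ with the first $n$ coordinate subspace of $\bfk^{n+1}$ and $B_n^{\rm pro}$ with the last $n$, reading these two relations entrywise forces $\phi_1$ to be a scalar multiple $\lambda\,\Id_{n+1}$ of the identity and then $\phi_0=\lambda\,\Id_n$; hence $\End_{\bfk\QK}(P_n)\cong\bfk$ is local and $P_n$ is indecomposable. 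For $I_n$ one runs the same computation with transposed matrices, or, more cleanly, applies the $\bfk$-duality $D=\Hom_\bfk(-,\bfk)$, which identifies $\rep_\bfk(\QK)$ with $\rep_\bfk(\QK^{\mathrm{op}})\cong\rep_\bfk(\QK)$ and carries $P_n$ to $I_n$.

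For the preprojective/preinjective claim I would first identify the initial terms: $P_0=S_1=P(1)$ is the simple projective at the sink and $P_1=P(0)$ is the projective cover of the source, while dually $I_0=S_0=I(0)$ and $I_1=I(1)$. Then, writing $\delta=(1,1)$ for the minimal imaginary root and using the Euler form of $\bfk\QK$, the defect of a module $M$ with $\dimv M=(a,b)$ is $\langle\delta,\dimv M\rangle=a-b$; thus $P_n$ has defect $-1<0$ and $I_n$ has defect $+1>0$, which by the classification above places $P_n$ in the preprojective component and $I_n$ in the preinjective component. Alternatively one can verify directly, from the shift-matrix descriptions, the almost split sequence $0\to P_{n-1}\to P_n\oplus P_n\to P_{n+1}\to0$ (whose structure maps are the two obvious inclusions and projections), so that $P_{n+1}=\tau^{-1}P_{n-1}$; starting from $P_0$ and $P_1$ this recursively generates the whole preprojective component, and dually for the $I_n$.

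The only mildly technical point is the entrywise verification in the second paragraph that the endomorphism equations force scalar matrices, and even this can be bypassed altogether by simply quoting Ringel's classification and matching dimension vectors — so in truth there is no serious obstacle here, and the lemma is recorded for later use rather than for its difficulty.
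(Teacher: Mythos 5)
Your argument is correct. The paper itself offers no proof of this lemma; it is quoted directly from Ringel's classification of indecomposable modules over the tame hereditary Kronecker algebra, so there is nothing to compare against beyond the citation. Your verification is sound on all points: the entrywise computation showing $\End_{\bfk\QK}(P_n)\cong\bfk$ does go through (comparing the overlapping columns of $\phi_1$ forced by the two shift matrices collapses $\phi_0$ to a scalar), the defect computation $\langle\delta,\dimv M\rangle=a-b$ correctly separates the preprojective and preinjective components, the duality $D=\Hom_\bfk(-,\bfk)$ combined with the swap $\QK^{\mathrm{op}}\cong\QK$ does carry $P_n$ to $I_n$, and the almost split sequences $0\to P_{n-1}\to P_n\oplus P_n\to P_{n+1}\to 0$ give the alternative recursive identification $P_{n+1}=\tau^{-1}P_{n-1}$. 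As you note at the end, the lemma is recorded for later use (it feeds into Lemma \ref{lem:proj-inj}) rather than for its difficulty, and simply matching dimension vectors against Ringel's list is exactly what the paper intends.
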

	
	Let $\LaK^\imath$ be the $\imath$quiver algebra of the split Kronecker $\imath$quiver $(\QK,\Id)$, cf. \cite{LW20}.
	Then $\LaK^\imath$ is isomorphic to the algebra with its quiver $\ov \QK$ and relations as follows:
	\begin{center}\setlength{\unitlength}{0.7mm}
		\vspace{-0.2cm}
		\begin{equation}
			\label{eq:iquiver}
			\begin{picture}(30,13)(0,0)
				\put(0,-2){\small $0$}
				\put(2.5,0.5){\vector(1,0){17}}
				\put(2.5,-2){\vector(1,0){17}}
				\put(10,-0.5){$^{\alpha}$}
				\put(10,-8.5){$^{\beta}$}
				\put(20,-2){\small $1$}
				\color{purple}
				\put(0,9){\small $\varepsilon_0$}
				\put(20,9){\small $\varepsilon_1$}
				
				\qbezier(-1,1)(-3,3)(-2,5.5)
				\qbezier(-2,5.5)(1,9)(4,5.5)
				\qbezier(4,5.5)(5,3)(3,1)
				\put(3.1,1.4){\vector(-1,-1){0.3}}
				
				\qbezier(19,1)(17,3)(18,5.5)
				\qbezier(18,5.5)(21,9)(24,5.5)
				\qbezier(24,5.5)(25,3)(23,1)
				\put(23.1,1.4){\vector(-1,-1){0.3}}
			\end{picture}
		\end{equation}
		\vspace{-0.2cm}
	\end{center}
	\[
	\varepsilon_0^2=0=\varepsilon_1^2, \quad \varepsilon_1 \alpha=\alpha\varepsilon_0, \quad \varepsilon_1 \beta=\beta\varepsilon_0.
	\]
	We can make sense of the $\imath$Hall algebra $\tMHL$; see Example~\ref{ex:Q}. Recall the universal $q$-Onsager algebra $\tUi$ from \S\ref{subsec:On}.
	
	\begin{proposition}
		[\text{\cite[Theorem 9.6]{LW20}}]
		\label{prop:mono}
		There exists a $\Q({\sqq})$-algebra monomorphism
		\begin{align}
			\label{eq:psi}
			\widetilde{\psi}: \tUi_{ |v={\sqq}} &\longrightarrow \tMHL,
		\end{align}
		which sends
		$
		B_i \mapsto \frac{-1}{q-1}[S_{i}],
		\K_i \mapsto [K_{S_i}], \text{ for }i=0,1.
		$
	\end{proposition}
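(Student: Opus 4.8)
Since $(\QK,\Id)$ is precisely the split $\imath$quiver of affine type $A_1$, this proposition is the rank-two instance of the general $\imath$Hall algebra realization of universal quasi-split $\imath$quantum groups of Kac--Moody type, namely \cite[Theorem 9.6]{LW20}, so in the present paper it suffices to cite that result. Were one to reprove it from scratch, the plan is the standard two-step scheme. First I would check that $\widetilde\psi$ is a well-defined algebra homomorphism, i.e., that the images $\frac{-1}{q-1}[S_0]$, $\frac{-1}{q-1}[S_1]$, $[K_{S_0}]$, $[K_{S_1}]$ in $\tMHL=\iH(\rep_\bfk(\QK))$ satisfy the Serre type relations of $\tUi$ from \S\ref{subsec:On}. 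Writing $[K_{S_i}]=[K_{\widehat{S_i}}]$, the relations $\K_i\K_i^{-1}=1$ and the centrality of the $\K_i$ are immediate from Proposition~\ref{prop:hallbasis}, since the quantum torus $\widetilde\ct(\rep_\bfk(\QK))$ is a central subalgebra isomorphic to the group algebra of $K_0(\rep_\bfk(\QK))$. The only substantial point is the $\imath$Serre relation, which becomes a single identity among $[S_0]$, $[S_1]$ and $[K_{S_0}]$: this is a finite Hall-number computation inside the category $\rep(\LaK^\imath)$ of representations of the $\imath$quiver algebra \eqref{eq:iquiver}, which, restricted to one arrow, reduces to the rank-one ($\imath A_1$) case. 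This is carried out in \cite{LW20, LW22}.

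The second and harder step is injectivity, for which I would argue in the style of Ringel's proof that the composition algebra realizes half a quantum group. By Proposition~\ref{prop:hallbasis}, $\tMHL$ has the $\imath$Hall basis $\{[M]*[K_\alpha]\mid M\in\rep_\bfk(\QK),\ \alpha\in K_0(\rep_\bfk(\QK))\}$, and the central monomials $\tK_\beta$ of $\tUi$ map bijectively onto the subalgebra spanned by the $[K_\alpha]$; so it suffices to prove injectivity modulo the quantum torus, i.e., on the $\imath$composition subalgebra generated by $[S_0]$ and $[S_1]$. Using the isomorphism $\tUi\cong\tUiD$ of Theorem~\ref{thm:UUiso}, one passes to the Drinfeld presentation, in which $\tUi$ is spanned by ordered monomials in the real root vectors $B_{1,r}$ ($r\in\Z$), the imaginary root vectors $\Theta_m$ ($m\ge1$), and $\K_1^{\pm1},C^{\pm1}$; tracking these through $\widetilde\psi$ (using the Hall-algebra counterparts of the automorphisms $\TT_1$, cf.\ \cite{LW21a}) shows that their images have leading terms $[M]*[K_\alpha]$ with $M$ running respectively over the indecomposable preprojective and preinjective $\bfk\QK$-modules and over (direct sums of) regular modules of dimension a multiple of $\delta$. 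Since distinct ordered monomials give distinct leading terms, $\widetilde\psi$ is injective, in fact an isomorphism onto this subalgebra.

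The expected main obstacle is exactly this second step: one must keep precise track of the whole universal $\imath$quantum group --- in particular of the imaginary root vectors $\Theta_m$ and of the two central elements --- and match them with Hall elements supported on the regular part of $\rep_\bfk(\QK)$; this matching, and the triangular-type decomposition underpinning it, is the technical heart of \cite{LW20}. By contrast, the $\imath$Serre verification of the first step is routine once the rank-one case is in hand.
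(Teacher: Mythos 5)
Your primary move --- observing that this is exactly the split affine $A_1$ instance of \cite[Theorem 9.6]{LW20} and citing it --- is precisely what the paper does; no proof is given there beyond the attribution. Your supplementary sketch of how one would reprove it (verifying the $\imath$Serre relation in $\tMHL$, then an injectivity argument) is a reasonable outline of the argument in \cite{LW20} and does not conflict with anything in the present paper.
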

	
	Let $K_0(\bfk \QK)=\Z^2$ be the Grothendieck group of $\rep_\bfk(\QK)$. Denote by $\alpha_i$ the class of $S_i$, for $i=0,1$. For $\beta=a_0\alpha_0+ a_1\alpha_1\in \Z^2$, we can define $[K_\beta]:=[K_{S_0}]^{a_0}*[K_{S_1}]^{a_1}$. Let $\widetilde{\ct}(\LaK^\imath)$ be the quantum torus of $\tMHL$, i.e., the subalgebra of $\tMHL$ generated by $[K_{S_i}]$, $i=0,1$. Then $\tMHL$ is free as a left $\widetilde{\ct}(\LaK^\imath)$-module with a basis given by
	$\{[X]\mid  X\in\rep_\bfk(\QK)\subseteq \rep(\LaK^\imath)\}$; see Proposition \ref{prop:hallbasis}. 
	
	Let $\QK':=\bs^+_1 \QK=\bs^-_0 \QK$, and let $\LaK'^\imath$ be the split $\imath$quiver algebra of $(\QK', \Id)$. Let $\dag: \QK'\rightarrow \QK$ be the natural isomorphism by exchanging the vertices $0$ and $1$, which induces an isomorphism $\dag:\rep(\LaK'^\imath)\stackrel{\simeq}{\rightarrow}\rep(\LaK^\imath)$.
	Then we obtain two functors (cf. \eqref{eq:Fl} for $F_i^\pm$):
	\begin{align*}
		\BS^-:=\dag \circ F_0^-: \rep(\LaK^\imath)\rightarrow \rep(\LaK^\imath),
		\quad
		\BS^+:=\dag \circ F_1^+: \rep(\LaK^\imath)\rightarrow \rep(\LaK^\imath).
	\end{align*}
	We shall describe the actions of $\BS^+$ and $\BS^-$ on $\rep_\bfk(\QK)\subseteq \rep(\LaK^\imath)$.
	
	For a representation of $\bfk \QK$,
	$X= \xymatrix{(V\ar@<0.5ex>[r]^A \ar@<-0.5ex>[r]_B& W)},$
	we have the  exact sequences
	\begin{align*}
		0\rightarrow U'\stackrel{\begin{pmatrix} C'\\D'\end{pmatrix}}{\longrightarrow} V\oplus V\stackrel{(A,B)}{\longrightarrow} W ,
		\quad
		V \stackrel{\begin{pmatrix} A\\B\end{pmatrix}}{\longrightarrow} W\oplus W\stackrel{(C'',D'')}{\longrightarrow} U''\rightarrow0.
	\end{align*}
	Then by \cite[Section 1]{BGP73} we have
	\begin{align*}
		\BS^+(X)= \xymatrix{(U'\ar@<0.5ex>[r]^{C'} \ar@<-0.5ex>[r]_{D'}& V)}, \quad
		\BS^-(X) = \xymatrix{(W\ar@<0.5ex>[r]^{C''} \ar@<-0.5ex>[r]_{D''}& U'')}.
	\end{align*}
	
	Let $\rep^i_\bfk(\QK)$ be the full subcategory of $\rep_\bfk(\QK)$ consisting of modules without summands isomorphic to $S_i$, for $i=0,1$. Then $\BS^+:\rep^1_\bfk(\QK) \rightarrow\rep^0_\bfk(\QK)$ is an equivalence with $\BS^-$ as its inverse.
	
	The functor $F_1^+$  (cf. \eqref{eq:Fl}) induces an isomorphism $\Gamma_1: \tMHL  \stackrel{\sim}{\rightarrow}  {}^\imath\widetilde{\ch}(\bfk\QK')$ by \cite[Theorem 4.3]{LW21a}.
	Similarly, $\BS^+$ induces an automorphism $\BS^+: \tMHL  \stackrel{\sim}{\rightarrow}  {}^\imath\widetilde{\ch}(\bfk\QK')$.
	Note that $\dag$ induces an isomorphism $\dag:   {}^\imath\widetilde{\ch}(\bfk\QK')   \stackrel{\sim}{\rightarrow}  \tMHL$. By definition, it is clear that
	\[
	\BS^+=\dag \circ \Gamma_1 :  \tMHL  \stackrel{\sim}{\longrightarrow} \tMHL.
	\]
	By \cite[Proposition 4.4]{LW21a}, we have the following result.
	
	\begin{lemma}
		\label{prop:reflection}
		The isomorphism $\BS^+:\tMHL\xrightarrow{\sim}  \tMHL$ sends, for $M\in\rep^1_\bfk(\QK)$,
		\begin{align*}
			\BS^+([M])&= [\BS^+(M)],
			\quad
			\BS^+([S_1]) = [K_{S_0}]^{-1}* [S_0],
			\\
			\BS^+([K_{\alpha_1}])&= [K_{ -\alpha_0}], \quad \BS^+([K_{\alpha_0}])= [K_{ \alpha_1+2\alpha_0}].
		\end{align*}
	\end{lemma}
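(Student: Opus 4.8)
The plan is to read off all four formulas from the explicit description of the reflection isomorphism $\Gamma_1$ in \cite[Proposition~4.4]{LW21a}, combined with the identity $\BS^+=\dag\circ\Gamma_1$ recorded above; the role of $\dag$ is purely bookkeeping. Concretely, the composite is
\[
\tMHL\ \xrightarrow{\ \Gamma_1\ }\ {}^\imath\widetilde{\ch}(\bfk\QK')\ \xrightarrow{\ \dag\ }\ \tMHL ,
\]
and since $\dag$ is induced by the exact equivalence $\dag\colon\rep(\LaK'^\imath)\to\rep(\LaK^\imath)$ interchanging the two vertices, it acts on the $\imath$Hall basis of Proposition~\ref{prop:hallbasis} by $[N]\mapsto[\dag N]$ and on the quantum torus by $[K_\beta]\mapsto[K_{\dag\beta}]$, where $\dag$ on $K_0(\bfk\QK')$ swaps $\alpha_0'$ and $\alpha_1'$. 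Thus it suffices to recall the action of $\Gamma_1$ and then apply $\dag$.

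First, for $M\in\rep^1_\bfk(\QK)$ --- a module over $\LaK^\imath$ with no $S_1$-summand --- \cite[Proposition~4.4]{LW21a} gives $\Gamma_1([M])=[F_1^+(M)]$, where $F_1^+$ restricts on such modules to the classical BGP reflection $\rep^1_\bfk(\QK)\to\rep^0_\bfk(\QK')$ (as noted in the excerpt). Applying $\dag$ and using $\BS^+=\dag\circ F_1^+$ gives $\BS^+([M])=[\dag F_1^+(M)]=[\BS^+(M)]$, the first formula. Next, \cite[Proposition~4.4]{LW21a} computes $\Gamma_1([S_1])=[K_{\alpha_1'}]^{-1}*[S_1']$, with $S_1'$ the simple $\bfk\QK'$-module at vertex $1$; since $\dag$ sends $\alpha_1'\mapsto\alpha_0$ and $S_1'\mapsto S_0$, this yields $\BS^+([S_1])=[K_{S_0}]^{-1}*[S_0]$. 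Finally, on the quantum torus $\Gamma_1$ induces the simple reflection $s_1$ on $K_0(\bfk\QK)$, which for the Kronecker quiver sends $\alpha_1\mapsto-\alpha_1$ and $\alpha_0\mapsto\alpha_0+2\alpha_1$; hence $\Gamma_1([K_{\alpha_1}])=[K_{-\alpha_1'}]$ and $\Gamma_1([K_{\alpha_0}])=[K_{\alpha_0'+2\alpha_1'}]$, and applying $\dag$ (which swaps $\alpha_0'\leftrightarrow\alpha_1$, $\alpha_1'\leftrightarrow\alpha_0$) produces $\BS^+([K_{\alpha_1}])=[K_{-\alpha_0}]$ and $\BS^+([K_{\alpha_0}])=[K_{\alpha_1+2\alpha_0}]$, as claimed.

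The only delicate point is the faithful transcription of the conventions of \cite[Proposition~4.4]{LW21a}: the precise $K$-twist appearing in $\Gamma_1([S_1])$, the absence of any power of $\sqq$ in these monomial identities, and the direction of the vertex relabelling built into $\dag\colon\QK'\to\QK$. There is no genuine new computation here; getting all normalizations mutually consistent is essentially the whole task. If one wished to avoid invoking \cite{LW21a}, one could instead argue directly from the exact-sequence description of $\BS^\pm$ displayed above together with Lemma~\ref{lem:Cf}, but this amounts to re-deriving the image $F_1^+(C_{S_1})$ in $\rep(\LaK^\imath)$, which is exactly what \cite[Proposition~4.4]{LW21a} supplies.
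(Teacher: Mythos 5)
Your proposal is correct and matches the paper's approach exactly: the paper derives this lemma as an immediate consequence of \cite[Proposition 4.4]{LW21a} together with the identity $\BS^+=\dag\circ\Gamma_1$ established just above the statement, which is precisely the composite you unwind. Your extra care with the vertex-relabelling bookkeeping under $\dag$ (on modules, on $K_0$, and on the simple reflection $s_1$) is consistent and fills in the details the paper leaves implicit.
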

	
	Dually, we have the following lemma.
	
	\begin{lemma}
		\label{prop:refinverse}
		The isomorphism $\BS^-:\tMHL\xrightarrow{\sim}  \tMHL$ sends, for $M\in\rep^0_\bfk(\QK),$
		\begin{align*}
			\BS^-([M])= [\BS^-(M)],  &
			\quad
			\BS^-([S_0]) = [K_{S_1}]^{-1}* [S_1],
			\\
			\BS^-([K_{\alpha_1}]) = [K_{ \alpha_0+2\alpha_1}], &
			\quad
			\BS^-([K_{\alpha_0}])= [K_{- \alpha_1}].
		\end{align*}
	\end{lemma}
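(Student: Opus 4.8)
The plan is to deduce this statement from Lemma~\ref{prop:reflection} by the symmetry of the Kronecker $\imath$quiver which interchanges the two vertices $0$ and $1$ (and hence exchanges the unique source with the unique sink, as well as $\QK$ with $\QK'$). Concretely, the reflection functor $F_0^-$ at the source $0$ induces, by the source version of \cite[Theorem~4.3]{LW21a}, an isomorphism of $\imath$Hall algebras $\Gamma_0:\tMHL\xrightarrow{\sim}{}^\imath\widetilde{\ch}(\bfk\QK')$, and by definition $\BS^-=\dag\circ F_0^-$ corresponds to $\dag\circ\Gamma_0:\tMHL\xrightarrow{\sim}\tMHL$. One then runs the proof of Lemma~\ref{prop:reflection} verbatim with the roles of $0$ and $1$ interchanged.

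In more detail, the steps are as follows. First, for $M\in\rep^0_\bfk(\QK)$, the $\imath$Hall basis of Proposition~\ref{prop:hallbasis} together with the fact (recalled after \eqref{eq:Fl}) that $F_0^-$ acts on $\rep_\bfk(\QK)\subseteq\rep(\LaK^\imath)$ as the classical BGP reflection gives $\Gamma_0([M])=[F_0^-(M)]$, and applying $\dag$ yields $\BS^-([M])=[\BS^-(M)]$. Second, on the exceptional simple $S_0$ (the one sent to $0$ by $F_0^-$) and on the acyclic generators $[K_{\alpha_0}],[K_{\alpha_1}]$ spanning the quantum torus $\widetilde{\ct}(\LaK^\imath)$, the explicit formulas of the source version of \cite[Proposition~4.4]{LW21a} compute $\Gamma_0$; applying $\dag$, which sends $\alpha_0\leftrightarrow\alpha_1$ and $[S_0]\leftrightarrow[S_1]$, then produces exactly the displayed identities. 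As a consistency check one observes that the resulting formulas are precisely those of Lemma~\ref{prop:reflection} with $0$ and $1$ interchanged, and that the automorphism $\BS^-$ so obtained is a two-sided inverse of the automorphism $\BS^+$ of Lemma~\ref{prop:reflection} on the generators $[S_i],[K_{S_i}]$ ($i=0,1$), compatibly with $\BS^-$ being the inverse of $\BS^+$ already at the level of the subcategories $\rep^0_\bfk(\QK)\rightleftarrows\rep^1_\bfk(\QK)$.

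There is no substantial obstacle here; the only point requiring care is the bookkeeping of normalizations. One must check that the dual (source) versions of \cite[Theorem~4.3]{LW21a} and \cite[Proposition~4.4]{LW21a} hold with the same shape — which is immediate, since a source reflection for $(Q,\Id)$ is a sink reflection for $(Q^{\mathrm{op}},\Id)$ — and that the powers of $\sqq$ and the $[K_\bullet]$ factors appearing in those formulas transform correctly under $\dag$. Since the Euler form of $\QK'$ is carried by $\dag$ onto that of $\QK$ (the vertex swap turns the $1\to 0$ arrows of $\QK'$ into the $0\to 1$ arrows of $\QK$), all twisting factors match up and the displayed formulas follow.
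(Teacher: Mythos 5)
Your proposal is correct and matches the paper's own treatment: the paper gives no separate argument for this lemma, introducing it only with ``Dually, we have the following lemma,'' i.e.\ precisely the dualization of Lemma~\ref{prop:reflection} via the source reflection $F_0^-$ and the vertex swap $\dag$ that you spell out. Your consistency check that the resulting formulas invert those of Lemma~\ref{prop:reflection} on the generators is a nice confirmation but not needed beyond what the paper implicitly asserts.
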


	By adapting \cite{LW21a}, we have the following commutative diagrams
	\[
	\xymatrix{ \tUi_{ |v={\sqq}} \ar[r]^{\dag T_1} \ar[d]^{\widetilde{\psi}} & \tUi_{ |v={\sqq}} \ar[d]^{\widetilde{\psi}}
		\\
		\tMHL \ar[r]^{\BS^+}  &\tMHL}
	\qquad\qquad
	\xymatrix{ \tUi_{ |v={\sqq}} \ar[r]^{T_1^{-1} \dag} \ar[d]^{\widetilde{\psi}} & \tUi_{ |v={\sqq}} \ar[d]^{\widetilde{\psi}}
		\\
		\tMHL \ar[r]^{\BS^-}  &\tMHL}
	\]
	
	\begin{lemma}
		\label{lem:proj-inj}
		For $n\geq0$, we have
		\begin{align*}
			\widetilde{\psi}(B_{n\delta+\alpha_1})=\frac{-1}{q-1}[P_n],  &\qquad
			\widetilde{\psi}(B_{-(n+1)\delta+\alpha_1})   =\frac{-1}{q-1}[I_n] *[K_{-n\de-\alpha_0}].
		\end{align*}
	\end{lemma}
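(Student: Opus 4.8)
The plan is to reduce both identities to the case $n=0$ — which is Proposition~\ref{prop:mono} — by iterating the reflection functors, using the two commutative squares recorded just above the lemma. Recall from \eqref{eq:B1n} that $B_{n\delta+\alpha_1}=B_{1,n}=(\dag\TT_1)^{-n}(B_1)$ and $B_{-(n+1)\delta+\alpha_1}=B_{1,-(n+1)}=(\dag\TT_1)^{n+1}(B_1)$, and that $(\dag\TT_1)^{-1}=\TT_1^{-1}\dag$ since $\dag^2=\Id$. Hence the two squares yield, for all $n\ge 0$,
\[
\widetilde\psi\big(B_{(n+1)\delta+\alpha_1}\big)=\BS^-\big(\widetilde\psi(B_{n\delta+\alpha_1})\big),
\qquad
\widetilde\psi\big(B_{-(n+2)\delta+\alpha_1}\big)=\BS^+\big(\widetilde\psi(B_{-(n+1)\delta+\alpha_1})\big).
\]
Together with $\widetilde\psi(B_1)=-\frac{1}{q-1}[S_1]$ from Proposition~\ref{prop:mono} and the identifications $S_1=P_0$, $S_0=I_0$, the whole statement will follow by induction on $n$ once the iterated reflections of these stalk modules are identified.

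For the preprojective formula I would induct on $n\ge 0$, the base case $n=0$ being Proposition~\ref{prop:mono}. For the inductive step: $P_n$ is indecomposable with dimension vector $(n,n+1)\ne(1,0)$, hence lies in $\rep^1_\bfk(\QK)\cap\rep^0_\bfk(\QK)$, so by Lemma~\ref{prop:refinverse} we have $\BS^-([P_n])=[\BS^-(P_n)]$. Feeding $P_n$ (with structure maps $A_n^{\rm pro},B_n^{\rm pro}$) into the reflection-functor recipe — the combined map $\bfk^n\to\bfk^{n+1}\oplus\bfk^{n+1}$ with components $A_n^{\rm pro},B_n^{\rm pro}$ is injective with $n$-dimensional image, so the cokernel $U''$ is $(n+2)$-dimensional — one checks directly that $\BS^-(P_n)\cong P_{n+1}$. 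Combined with the inductive hypothesis, this gives $\widetilde\psi(B_{(n+1)\delta+\alpha_1})=\BS^-\big(-\frac{1}{q-1}[P_n]\big)=-\frac{1}{q-1}[P_{n+1}]$.

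For the preinjective formula I would again induct on $n\ge 0$. For $n=0$: since $S_1\notin\rep^1_\bfk(\QK)$ the naive rule does not apply, and instead Lemma~\ref{prop:reflection} gives $\BS^+([S_1])=[K_{S_0}]^{-1}*[S_0]$; as $[K_{S_0}]^{-1}=[K_{-\alpha_0}]$ is central in $\tMHL$ and $S_0=I_0$, this yields $\widetilde\psi(B_{-\delta+\alpha_1})=-\frac{1}{q-1}[I_0]*[K_{-\alpha_0}]$. For the step: $I_n$ is indecomposable with dimension vector $(n+1,n)\ne(0,1)$, hence lies in $\rep^1_\bfk(\QK)$, so $\BS^+([I_n])=[\BS^+(I_n)]$ by Lemma~\ref{prop:reflection}, and the computation dual to the one above gives $\BS^+(I_n)\cong I_{n+1}$. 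Since $\BS^+$ is an algebra isomorphism and, by the $[K_\beta]$-part of Lemma~\ref{prop:reflection}, sends $[K_{\alpha_0}]\mapsto[K_{\alpha_1+2\alpha_0}]$ and $[K_{\alpha_1}]\mapsto[K_{-\alpha_0}]$, a short computation in $K_0(\bfk\QK)=\Z^2$ — writing $-n\delta-\alpha_0=-(n+1)\alpha_0-n\alpha_1$ — gives $\BS^+([K_{-n\delta-\alpha_0}])=[K_{-(n+1)\delta-\alpha_0}]$. Hence
\[
\widetilde\psi\big(B_{-(n+2)\delta+\alpha_1}\big)
=\BS^+\Big(-\frac{1}{q-1}[I_n]*[K_{-n\delta-\alpha_0}]\Big)
=-\frac{1}{q-1}[I_{n+1}]*[K_{-(n+1)\delta-\alpha_0}].
\]

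The work here is computational rather than conceptual. The main tasks are the two BGP-type reflection computations $\BS^-(P_n)\cong P_{n+1}$ and $\BS^+(I_n)\cong I_{n+1}$ with the explicit matrices, together with the bookkeeping of the central factors $[K_\alpha]$; the latter is the delicate part. The $K$-shift appearing in the preinjective formula is precisely the trace of the first application of $\BS^+$ landing on $[S_1]$, where $S_1\notin\rep^1_\bfk(\QK)$ forces the non-naive value $[K_{S_0}]^{-1}*[S_0]$, and this must be reconciled with the reflection action on $[K_\beta]$ at each subsequent step.
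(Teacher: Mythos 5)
Your proposal is correct and follows essentially the same route as the paper: both reduce to the $n=0$ case of Proposition~\ref{prop:mono} via the commutative squares intertwining $\widetilde\psi$ with $\BS^{\pm}$, iterate the reflection functors, and track the central $[K_\beta]$ factors using $\BS^+(\delta)=\delta$ (the paper merely identifies $(\BS^-)^n(S_1)\cong P_n$ and $(\BS^+)^n(S_0)\cong I_n$ in one step by uniqueness of the indecomposable with a prescribed dimension vector, rather than by your explicit matrix induction). The only blemish is the claim that $P_n\in\rep^1_\bfk(\QK)$ for all $n\ge 0$ (false for $P_0=S_1$), but since you only invoke $P_n\in\rep^0_\bfk(\QK)$ when applying Lemma~\ref{prop:refinverse}, this does not affect the argument.
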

	
	\begin{proof}
		We have $(\BS^-)^n(M)\in \rep^0_\bfk(\QK)$, and for any indecomposable $M\in \rep^0_\bfk(\QK)$, we have $\BS^-(M)$ is the unique $\bfk \QK$-module (up to isomorphisms) of dimension  $\dag \bs_0(\dimv M)$. In particular, $\dimv(\BS^-)^n(S_1) =n\de+\alpha_1$, so $(\BS^-)^n(S_1)\cong P_n$.
		Then
		\begin{align*}
			\widetilde{\psi}(B_{n\delta+\alpha_1})=& \widetilde{\psi}( ( T_0\dag)^{-1}(B_1))=(\BS^-)^n \widetilde{\psi}(B_1)
			\\
			=&\frac{-1}{q-1}(\BS^-)^n([S_1])
			=\frac{-1}{q-1}[P_n].
		\end{align*}
		We also have
		\begin{align*}
			\widetilde{\psi}(B_{-(n+1)\delta+\alpha_1})=&\widetilde{\psi}((T_0\dag)^{n+1}(B_1))=(\BS^+)^{n+1}\widetilde{\psi}(B_1)
			\\
			=&\frac{-1}{q-1}(\BS^+)^{n+1}([S_1])=\frac{-1}{q-1}(\BS^+)^{n}([S_0]*[K_{-\alpha_0}])
			\\
			=&\frac{-1}{q-1}(\BS^+)^{n}([S_0])*[K_{-n\de-\alpha_0}]=\frac{-1}{q-1}[I_n] *[K_{-n\de-\alpha_0}].
		\end{align*}
		The lemma is proved.
	\end{proof}

	\subsection{A derived equivalence}
	
	Let $\tCMH$  be the composition subalgebra of $\tMHL$ generated by $[S_i]$, and $[K_{S_i}]$ ($i=0,1$).
	
	\begin{lemma}
		The composition algebra $\tCMH$ contains all the elements $[X]$, where $X$ is either an indecomposable pre-injective module or an indecomposable pre-projective module.
	\end{lemma}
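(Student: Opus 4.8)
The plan is to reduce the statement to the monomorphism $\widetilde\psi$ of Proposition~\ref{prop:mono} together with the explicit formulas of Lemma~\ref{lem:proj-inj}. The first step is to observe that $\tCMH$ already contains the whole quantum torus: by Proposition~\ref{prop:hallbasis} the subalgebra of $\tMHL$ generated by $[K_{S_0}],[K_{S_1}]$ is $\widetilde\ct(\LaK^\imath)\cong\Q(\sqq)[K_0(\bfk\QK)]$, so every $[K_\beta]$ with $\beta\in K_0(\bfk\QK)$, and in particular $[K_{S_i}]^{-1}$, lies in $\tCMH$. Since $\widetilde\psi$ carries the algebra generators $B_0,B_1,\K_0^{\pm1},\K_1^{\pm1}$ of $\tUi_{|v=\sqq}$ to $\frac{-1}{q-1}[S_0]$, $\frac{-1}{q-1}[S_1]$, $[K_{S_0}]^{\pm1}$, $[K_{S_1}]^{\pm1}$ --- all of which lie in $\tCMH$ --- we obtain $\widetilde\psi\big(\tUi_{|v=\sqq}\big)\subseteq\tCMH$.

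The second step uses that, for each $n\ge 0$, the real root vectors $B_{n\de+\alpha_1}$ and $B_{-(n+1)\de+\alpha_1}$ belong to $\tUi_{|v=\sqq}$, being obtained from $B_1$ by the automorphisms $\dag$ and $\TT_1^{\pm1}$ of $\tUi$ (cf.\ \eqref{eq:B1n} and the proof of Lemma~\ref{lem:proj-inj}). Applying $\widetilde\psi$ and invoking Lemma~\ref{lem:proj-inj} together with the first step yields
\begin{align*}
[P_n] &= -(q-1)\,\widetilde\psi\big(B_{n\de+\alpha_1}\big)\ \in\ \tCMH,
\\
[I_n]*[K_{-n\de-\alpha_0}] &= -(q-1)\,\widetilde\psi\big(B_{-(n+1)\de+\alpha_1}\big)\ \in\ \tCMH .
\end{align*}
Multiplying the second identity by the central, invertible element $[K_{n\de+\alpha_0}]\in\tCMH$ gives $[I_n]\in\tCMH$ as well. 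Since by the classification recalled above (\cite[Section 3.2]{Rin84}) every indecomposable pre-projective $\bfk\QK$-module is some $P_n$ and every indecomposable pre-injective one is some $I_n$ with $n\ge 0$, this proves the lemma.

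I do not anticipate a real obstacle: granting Lemma~\ref{lem:proj-inj}, the argument is essentially formal, the one point needing attention being that the torus inverses $[K_{S_i}]^{-1}$ genuinely lie in $\tCMH$, which is built into the description of the quantum torus. (An induction on $n$ through the reflection functors $\BS^{\pm}$ of Lemmas~\ref{prop:reflection}--\ref{prop:refinverse} would also work, using $\BS^-([S_1])=[P_1]$, $\BS^+([S_0])=[I_1]$ and the fact that $\BS^\pm$ sends $[S_0],[S_1]$ and the torus generators into $\tCMH$; but it would still require $[P_1],[I_1]\in\tCMH$ --- obtained exactly as above --- as the base case, so passing through $\widetilde\psi$ is the shortest route.)
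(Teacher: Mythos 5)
Your proof is correct and follows essentially the same route as the paper: the paper also observes that $\widetilde\psi$ identifies $\tUi_{|v=\sqq}$ with $\tCMH$ and then invokes Lemma~\ref{lem:proj-inj}, with the $[I_n]$ case handled by clearing the invertible central torus factor $[K_{-n\de-\alpha_0}]$ exactly as you do. Your write-up merely spells out the containment $\widetilde\psi(\tUi_{|v=\sqq})\subseteq\tCMH$ and the invertibility of the torus elements, which the paper leaves implicit.
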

	
	\begin{proof}
		By Proposition~\ref{prop:mono}, we have an algebra isomorphism
		$\widetilde{\psi}: \tUi_{|v={\sqq}} \stackrel{\cong}{\rightarrow} \tCMH$ given by \eqref{eq:psi}.
		Now the lemma follows from Lemma \ref{lem:proj-inj}.
	\end{proof}
	
	\begin{definition}
		The composition algebra $\tCMHP$ is the subalgebra of $\tMHX$ generated by the elements $[\co(n)]$, $\haT_k$, and $[K_\alpha]$ where $n\in\Z,k\geq1$ and $\alpha\in K_0(\PL)\cong \Z^2$.
	\end{definition}
	
	Let $T=\co\oplus\co(1)$ and $B=\End_{\PL}(T)$. It is known \cite{Bei79} that $T$ is a tilting object, and $B^{op}\cong \bfk \QK$. It follows that
	\begin{align}  \label{Bei}
		\RHom_{\PL}(T,-):\cd^b(\coh(\PL))\stackrel{\simeq}{\longrightarrow} \cd^b(\rep_\bfk( \QK))
	\end{align}
	is a derived equivalence.
	
	Let $\cv$ be the subcategory of $\coh(\PL)$ consisting of $M$ such that $\Hom_{\PL}(T,M)=0$. Denote $\cu=\Fac T$, the full subcategory of $\coh(\PL)$ consisting of homomorphic images of objects in $\add T$. Then $(\cu,\cv)$ is a {\em torsion pair} of $\coh(\PL)$.
	
	\begin{lemma}
		$(\calc_1(\cu),\calc_1(\cv))$ is a torsion pair of $\calc_1(\coh(\P_\bfk^1))$. In particular, any $M\in\calc_1(\coh(\P_\bfk^1))$ admits a short exact sequence of the form
		\begin{align}
			\label{T-resol}
			0 \longrightarrow M \longrightarrow X_M \longrightarrow T_M \longrightarrow 0
		\end{align}
		where $X_M\in \calc_1(\cu)$ and $T_M\in\add K_T$.
	\end{lemma}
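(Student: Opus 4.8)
\medskip

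The plan is to deduce both statements from the corresponding facts for the torsion pair $(\cu,\cv)$ in $\coh(\PL)$, transported along the exact restriction functor $\res\colon\calc_1(\coh(\PL))\to\coh(\PL)$. First I would check $\Hom$‑vanishing: for $M^\bullet\in\calc_1(\cu)$ and $N^\bullet\in\calc_1(\cv)$ a morphism $M^\bullet\to N^\bullet$ in $\calc_1(\coh(\PL))$ is in particular a morphism $\res(M^\bullet)\to\res(N^\bullet)$ in $\coh(\PL)$, hence zero because $\Hom_{\coh(\PL)}(\cu,\cv)=0$. Next, given an arbitrary $M^\bullet=(M,d)$, let $t(M)\subseteq M$ be the torsion subsheaf with respect to $(\cu,\cv)$; since $\cu=\Fac T$ is closed under quotients, $d(t(M))$ is a quotient of $t(M)$ and a subsheaf of $M$, hence lies in $\cu$, and maximality of $t(M)$ forces $d(t(M))\subseteq t(M)$. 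Thus $(t(M),d|_{t(M)})$ is a subcomplex lying in $\calc_1(\cu)$, the quotient complex $(M/t(M),\bar d)$ lies in $\calc_1(\cv)$, and $0\to (t(M),d|_{t(M)})\to M^\bullet\to (M/t(M),\bar d)\to 0$ is the required torsion sequence. Exactness of $\res$ transfers the closure properties of $\cu,\cv$ to $\calc_1(\cu),\calc_1(\cv)$, so $(\calc_1(\cu),\calc_1(\cv))$ is a torsion pair.

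For the short exact sequence in the ``in particular'' part, I would first reduce to the case $\res(M^\bullet)\in\cv$. Writing the torsion sequence of $M^\bullet$ as $0\to (M^\bullet)_t\to M^\bullet\to (M^\bullet)_f\to 0$ and applying $\Hom_{\calc_1(\coh(\PL))}(K_{T'},-)$, the term $\Ext^2_{\calc_1(\coh(\PL))}(K_{T'},(M^\bullet)_t)$ vanishes by Lemma~\ref{lem:pd acyclic} (as $K_{T'}$ is acyclic), so $\Ext^1_{\calc_1(\coh(\PL))}(K_{T'},M^\bullet)\to\Ext^1_{\calc_1(\coh(\PL))}(K_{T'},(M^\bullet)_f)$ is surjective. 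Hence any short exact sequence $0\to (M^\bullet)_f\to Y^\bullet\to K_{T'}\to 0$ with $Y^\bullet\in\calc_1(\cu)$ and $T'\in\add T$ lifts to a short exact sequence $0\to M^\bullet\to X^\bullet\to K_{T'}\to 0$; pushing out along $M^\bullet\twoheadrightarrow (M^\bullet)_f$ gives $0\to (M^\bullet)_t\to X^\bullet\to Y^\bullet\to 0$, and since $(M^\bullet)_t,Y^\bullet\in\calc_1(\cu)$ and $\calc_1(\cu)$ is extension‑closed, $X^\bullet\in\calc_1(\cu)$. So it suffices to treat $M^\bullet$ with $\res(M^\bullet)\in\cv$.

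In that case I would exploit that every object of $\cv$ is a finite direct sum of line bundles $\co(-a)$ with $a\ge 1$, and that such sheaves admit finite $\add T$‑coresolutions obtained by splicing the (twisted) Euler sequences $0\to\co(-a)\to\co(-a+1)^{\oplus 2}\to\co(-a+2)\to 0$; this produces, at the level of $\coh(\PL)$, a sequence $0\to \res(M^\bullet)\to X_0\to T'\to 0$ with $X_0\in\cu$ and $T'\in\add T$, and, using $\Ext^1_{\coh(\PL)}(\co(1),F)\cong \mathrm{D}\Hom_{\coh(\PL)}(F,\co(-1))$ from Serre duality, one can arrange the extension class so that the middle term indeed lies in $\cu$. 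One then promotes this to a short exact sequence $0\to M^\bullet\to X^\bullet\to K_{T'}\to 0$ in $\calc_1(\coh(\PL))$ by a compatible choice of class in $\Ext^1_{\calc_1(\coh(\PL))}(K_{T'},M^\bullet)$, proceeding by induction on $\max_i a_i$. The hard part is exactly this promotion: the differential $d$ on $\res(M^\bullet)$ does not extend ``for free'', and the naive guess of taking the underlying sheaf of $X^\bullet$ to be $X_0\oplus T'$ fails, since the differential it would have to induce on the cokernel $\cong T'\oplus T'$ amounts to a splitting of the non‑split coresolution $X_0\twoheadrightarrow T'$. Thus the class in $\Ext^1_{\calc_1(\coh(\PL))}(K_{T'},M^\bullet)$ must be chosen to realize the contractible complex $K_{T'}$ as cokernel \emph{and} to keep the underlying sheaf of $X^\bullet$ inside $\cu=\Fac T$ simultaneously; showing that such a class exists is where the real computation lies.
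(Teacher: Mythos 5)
Your proof of the torsion-pair assertion is correct and coincides in substance with the paper's: both arguments take the canonical sequence $0\to U\to M\to V\to 0$ of the underlying sheaf and check that the differential restricts to $U$ and descends to $V$ (the paper deduces $d(U)\subseteq U$ from $\Hom(U,V)=0$ via $g\circ d\circ f=0$, you from closure of $\cu=\Fac T$ under quotients and maximality of $t(M)$; both are fine), and the $\Hom$-vanishing for $(\calc_1(\cu),\calc_1(\cv))$ is immediate via $\res$, exactly as you say.

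The ``in particular'' statement is another matter. The paper offers no argument for it (it refers to \cite[Theorem 5.8]{LP21}), and your attempt to supply one stops at the decisive point: in the case $\res(M^\bullet)\in\cv$ you never exhibit a class in $\Ext^1_{\calc_1(\coh(\PL))}(K_{T'},M^\bullet)$ whose middle term lies in $\calc_1(\cu)$ --- you explain why the naive candidates fail and then state that ``showing that such a class exists is where the real computation lies.'' Since that existence \emph{is} the content of \eqref{T-resol}, the proof is incomplete as written. The gap is genuine but fillable, and more cheaply than your set-up suggests. The assignment $X\mapsto K_X$ is an exact functor left adjoint to $\res$, with $\Hom_{\calc_1(\coh(\PL))}(K_X,N^\bullet)\cong\Hom_{\coh(\PL)}(X,\res N^\bullet)$, the counit $K_{\res M^\bullet}\to M^\bullet$ being the epimorphism $(d,\Id)\colon M\oplus M\to M$. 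Take any short exact sequence $0\to\res(M^\bullet)\to E\to T'\to 0$ in $\coh(\PL)$ with $E\in\cu$ and $T'\in\add T$; for instance the universal extension of $\res(M^\bullet)$ by copies of $T$ has $\Ext^1(T,E)=0$, hence $E\in\cu$, and this works for arbitrary $M^\bullet$, making your reduction to $\cv$ and the induction on degrees unnecessary. Applying $K_{(-)}$ gives $0\to K_{\res M^\bullet}\to K_E\to K_{T'}\to 0$, and pushing this extension forward along the counit produces $0\to M^\bullet\to X^\bullet\to K_{T'}\to 0$, where $X^\bullet$ is the cokernel of $K_{\res M^\bullet}\to M^\bullet\oplus K_E$. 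Because the counit is surjective, $X^\bullet$ is a quotient of $K_E$, so $\res(X^\bullet)$ is a quotient of $E\oplus E$ and lies in $\cu=\Fac T$ automatically: no tuning of the extension class is needed, since closure of $\Fac T$ under quotients does all the work. Alternatively, you could simply invoke \cite[Theorem 5.8]{LP21} as the paper does.
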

	
	\begin{proof}
		We have $\Hom(U^\bullet,V^\bullet)=0$ for any $U^\bullet\in\calc_1(\cu)$, $V^\bullet\in\calc_1(\cv)$.
		
		For any $M^\bullet=(M,d)\in\calc_1(\coh(\P_\bfk^1))$, there exists
		a short exact sequence $0\rightarrow U\xrightarrow{f} M\xrightarrow{g} V\rightarrow0$ such that $U\in\cu$, $V\in\cv$.
		Since $\Hom(U,V)=0$, we have $gdf=0$, and then there exists $d':U\rightarrow U$ and $d'':V\rightarrow V$ such that $df=fd'$ and $gd=d''g$.
		Denote by $U^\bullet=(U,d')$ and $V^\bullet=(V,d'')$. We  have the  exact sequence
		$0\rightarrow U^\bullet\rightarrow M^\bullet\rightarrow V^\bullet\rightarrow0.$
		So $(\calc_1(\cu),\calc_1(\cv))$ is a torsion pair of $\calc_1(\coh(\P_\bfk^1))$.
		
		The proof of the last statement is the same as for \cite[Theorem 5.8]{LP21}.
	\end{proof}

	\begin{proposition}  \label{prop:tilt}
		Let $K_T=K_{\co}\oplus K_{\co(1)}$. Then $K_T$ is a tilting object of $\cd^b(\calc_1(\coh(\PL)))$ with
		$\End(K_T)\cong \LaK^\imath$, which gives rise to an equivalence:
		$$\RHom (K_T,-): \cd^b(\calc_1(\coh(\PL))) \stackrel{\simeq}{\longrightarrow} \cd^b(\rep(\LaK^\imath)).$$
	\end{proposition}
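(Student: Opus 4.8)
\emph{Plan.} I would verify the three conditions defining a tilting object of $\cd^b(\calc_1(\coh(\PL)))$: (i) $\End_{\cd^b(\calc_1(\coh(\PL)))}(K_T)\cong\LaK^\imath$; (ii) $\Hom_{\cd^b(\calc_1(\coh(\PL)))}(K_T,K_T[n])=0$ for $n\neq0$; (iii) the smallest thick subcategory containing $K_T$ is all of $\cd^b(\calc_1(\coh(\PL)))$. Granting (i)--(iii), the equivalence $\RHom(K_T,-)\colon\cd^b(\calc_1(\coh(\PL)))\xrightarrow{\simeq}\cd^b(\rep(\LaK^\imath))$ follows from the tilting theorem in the form used in the proof of \cite[Theorem~5.8]{LP21}, which is the template already invoked for the torsion-pair lemma above.

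For (i): a direct matrix computation shows that a morphism $K_X\to K_Y$ in $\calc_1(\ca)$ is an upper-triangular $2\times2$ matrix of morphisms $X\to Y$ with equal diagonal entries, so $\Hom_{\calc_1(\ca)}(K_X,K_Y)\cong\Hom_\ca(X,Y)^{\oplus2}$ naturally in $X,Y$. Specializing to $X,Y\in\{\co,\co(1)\}$ and using Beilinson's $\End_{\PL}(\co\oplus\co(1))\cong\bfk\QK$, one gets $\dim_\bfk\End_{\calc_1}(K_T)=2\cdot4=8=\dim_\bfk\LaK^\imath$. To match the algebra structures, send $e_0,e_1$ to $\Id_{K_\co},\Id_{K_{\co(1)}}$, send the Kronecker arrows $\alpha,\beta$ to the two coordinate morphisms $\co\to\co(1)$ promoted diagonally to $K_\co\to K_{\co(1)}$, and send $\varepsilon_0,\varepsilon_1$ to the canonical nilpotent endomorphisms $\left(\begin{smallmatrix}0&\Id\\0&0\end{smallmatrix}\right)$ of $K_\co,K_{\co(1)}$; the relations $\varepsilon_i^2=0$, $\varepsilon_1\alpha=\alpha\varepsilon_0$, $\varepsilon_1\beta=\beta\varepsilon_0$ are then visible from the block form, giving a surjection $\LaK^\imath\twoheadrightarrow\End_{\calc_1}(K_T)$, hence an isomorphism by the dimension count (and $\End_{\cd^b}(K_T)=\End_{\calc_1}(K_T)$ in view of (ii)).

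For (ii): the case $n<0$ is vacuous, and $n\geq2$ is Lemma~\ref{lem:pd acyclic}, since $K_T$ is acyclic. For $n=1$: by Corollary~\ref{cor:acyclic} and Lemma~\ref{lem:pd acyclic}, $\langle K_X,K_Y\rangle=\dim_\bfk\Hom_{\calc_1}(K_X,K_Y)-\dim_\bfk\Ext^1_{\calc_1}(K_X,K_Y)$; combining this with Lemma~\ref{lemma compatible of Euler form}(2), which gives $\langle K_X,K_Y\rangle=2\langle X,Y\rangle_\ca$, and with the identity $\Hom_{\calc_1}(K_X,K_Y)\cong\Hom_\ca(X,Y)^{\oplus2}$ from step (i), I obtain $\dim_\bfk\Ext^1_{\calc_1}(K_X,K_Y)=2\dim_\bfk\Ext^1_\ca(X,Y)$. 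Since $\Ext^1_{\coh(\PL)}(\co(i),\co(j))\cong\mathrm{D}\Hom_{\coh(\PL)}(\co(j),\co(i-2))=0$ for $i,j\in\{0,1\}$ (Serre duality and $\tau=(-2)$-shift), this forces $\Ext^1_{\calc_1}(K_T,K_T)=0$.

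For (iii), which is the main obstacle: every $(Y,d)\in\calc_1(\coh(\PL))$ sits in a short exact sequence $0\to C_{\ker d}\to(Y,d)\to C_{\Im d}\to0$ of stalk complexes (using $\Im d\subseteq\ker d$), so $\cd^b(\calc_1(\coh(\PL)))$ is generated as a thick subcategory by the $C_F$, $F\in\coh(\PL)$, and by Krull--Schmidt it suffices to treat $F=\co(n)$ and $F=S_x^{(m)}$. To place these in $\mathrm{thick}(K_T)$ I would use the torsion pair $(\calc_1(\cu),\calc_1(\cv))$ together with the $\add K_T$-resolution \eqref{T-resol}, reducing to resolving objects of $\calc_1(\cu)$, and lift Beilinson's equivalence \eqref{Bei} (for which $\mathrm{thick}\{\co,\co(1)\}=\cd^b(\coh(\PL))$) to the $1$-periodic setting; the defining sequences $0\to\co(-m d_x)\xrightarrow{f_x^m}\co\to S_x^{(m)}\to0$ and the Euler-type sequences $0\to\co(n-1)\to\co(n)^{\oplus2}\to\co(n+1)\to0$ provide the needed $\add T$-presentations. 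The genuine difficulty is that $\calc_1(\coh(\PL))$ has infinite global dimension (it contains $\calc_1(\tor_x(\PL))\cong\rep^{\rm nil}(\LaJ^\imath)$), so the tilting theorem cannot be applied naively; control of these resolutions comes from the weakly $1$-Gorenstein property (Corollary~\ref{cor:acyclic}, Remark~\ref{rem:weakly}), which in particular gives $\pd_{\calc_1}K_T\leq1$, so $\RHom(K_T,-)$ is concentrated in degrees $0,1$ with finite-dimensional cohomology and maps $\cd^b(\calc_1(\coh(\PL)))$ into $\cd^b(\rep(\LaK^\imath))$. The argument of \cite[Theorem~5.8]{LP21} then upgrades (i)--(iii) to the asserted derived equivalence.
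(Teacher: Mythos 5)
Your steps (i) and (ii) are essentially correct and are in fact more explicit than the paper, which declares the computation of $\End(K_T)\cong\LaK^\imath$ routine and refers to \cite[Theorem 5.11]{LP21} for the $\Ext$-vanishing; your identity $\Hom_{\calc_1(\ca)}(K_X,K_Y)\cong\Hom_\ca(X,Y)^{\oplus 2}$ and the Euler-form argument giving $\dim_\bfk\Ext^1_{\calc_1}(K_X,K_Y)=2\dim_\bfk\Ext^1_\ca(X,Y)$ are exactly the right ingredients there.

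The gap is in your condition (iii). Requiring that $K_T$ generate $\cd^b(\calc_1(\coh(\PL)))$ as a thick subcategory is not the relevant notion of tilting object here, and it is in fact \emph{false}, so the plan of placing every stalk complex $C_F$ in $\mathrm{thick}(K_T)$ cannot be carried out. Indeed, any equivalence $\RHom(K_T,-)$ sends $K_T$ to the free module $\LaK^\imath$ and hence $\mathrm{thick}(K_T)$ onto the perfect complexes $\mathrm{per}(\LaK^\imath)$, which form a proper subcategory of $\cd^b(\rep(\LaK^\imath))$ because $\LaK^\imath$ has infinite global dimension. Concretely, the same computations as in your (i)--(ii) give $\Hom_{\calc_1}(K_X,C_Y)\cong\Hom_\ca(X,Y)$ and $\dim_\bfk\Ext^1_{\calc_1}(K_X,C_Y)=\dim_\bfk\Ext^1_\ca(X,Y)$, so $\RHom(K_T,C_\co)$ is the one-dimensional $\LaK^\imath$-module $\Hom(T,\co)$ concentrated in degree $0$; this simple module has infinite projective dimension over $\LaK^\imath$ (its syzygy is again simple), hence is not perfect, hence $C_\co\notin\mathrm{thick}(K_T)$. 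The correct generation condition --- the one in Happel's tilting theorem \cite{Ha88} for classical tilting modules and in \cite[Theorems 5.8, 5.11]{LP21}, \cite[Theorem A.22]{LW22} --- is that every object of $\calc_1(\coh(\PL))$ admit a finite coresolution $0\to M\to T^0\to\cdots\to T^n\to 0$ with all $T^i\in\add K_T$; no finiteness of global dimension is needed for the resulting equivalence of \emph{bounded} derived categories. This is precisely what the paper extracts from the torsion pair: $\calc_1(\cu)=\Fac(K_T)$ together with the sequence \eqref{T-resol} furnishes such coresolutions. Your last paragraph does gesture at \eqref{T-resol}, but it is subordinated to the untenable thick-generation claim; replace (iii) by the coresolution condition and the remainder of your argument goes through.
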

	
	\begin{proof}
		Similar to \cite[Theorem 5.11]{LP21}, one has that $\Ext^p(K_T,K_T)=0$ for $p>0$.
		We have $\calc_1(\cu)=\Fac(K_T)$, and hence \eqref{T-resol} implies that $K_T$ is a tilting object of $\cd^b(\calc_1(\coh(\PL)))$.
		It is routine to check that $\End(K_T)\cong \LaK^\imath$, which will be omitted. Then the derived equivalence follows from the standard arguments; see \cite{Ha88}.
	\end{proof}
	
	\begin{proposition}
		\label{prop:F}
		Let $F= \Hom(K_T,-)$. Then there exists an isomorphism
		\begin{align*}
			\BF:\tMHX&\stackrel{\sim}{\longrightarrow} \tMHL\\\notag
			[M]&\mapsto [F(T_M)]^{-1}* [F(X_M)],
		\end{align*}
		where $X_M\in \calc_1(\cu)$ and $T_M\in\add K_T$, are defined in the short exact sequence \eqref{T-resol}.
	\end{proposition}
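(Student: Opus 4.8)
The strategy is to adapt to the $1$-periodic $\imath$Hall setting the arguments used for reflection functors in \cite{LW21a} and for tilting objects in \cite{LP21}. Since the derived equivalence of Proposition~\ref{prop:tilt} does not preserve the abelian structure, one must reconstruct the isomorphism of semi-derived Hall algebras directly from the tilting object $K_T=K_\co\oplus K_{\co(1)}$, the torsion pair $(\calc_1(\cu),\calc_1(\cv))$, and the $\calc_1(\cu)$-coresolution \eqref{T-resol}. Throughout put $F=\Hom(K_T,-)$, and recall from Proposition~\ref{prop:hallbasis} and Lemma~\ref{lem:Cf} that it suffices to control $\BF$ on stalk complexes and on the central elements $[K_\alpha]$, then to check consistency with the reduction $[M^\bullet]=[H(M^\bullet)]*[K_{\Im d}]$.

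First I would record the homological properties of $F$ on which the formula rests. Since $K_T$ is acyclic, Lemma~\ref{lem:pd acyclic} gives $\Ext^p_{\calc_1(\coh(\PL))}(K_T,-)=0$ for all $p\ge 2$; combined with $\Ext^1(K_T,K_T)=0$ (proof of Proposition~\ref{prop:tilt}) and the standard identity $\calc_1(\cu)=\Fac(K_T)=\{M:\Ext^1(K_T,M)=0\}$, this shows $F$ is exact on $\calc_1(\cu)$, and tilting theory shows it is moreover fully faithful there with essential image the torsion class of a torsion pair in $\rep(\LaK^\imath)$. Also $F$ carries $\add K_T$ to the projective $\LaK^\imath$-modules; under the identification $\rep(\LaK^\imath)=\calc_1(\rep_\bfk(\QK))$ of Example~\ref{ex:Q}(2) these are the acyclic complexes $K_P$ with $P$ projective in $\rep_\bfk(\QK)$ (the images of $\co,\co(1)$ under Beilinson's equivalence \eqref{Bei}). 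Hence $[F(T_M)]$ lies in the quantum torus $\widetilde{\ct}(\LaK^\imath)$, so it is central and invertible in $\tMHL$ and the formula defining $\BF$ makes sense. Finally, the derived equivalence of Proposition~\ref{prop:tilt} is an isometry for the Euler forms on Grothendieck groups, which via Lemma~\ref{lemma compatible of Euler form} controls the twists \eqref{eq:tH} on both sides.

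Next I would prove $\BF$ is well defined. Any two short exact sequences of the shape \eqref{T-resol} for a fixed $M$ are related by a Schanuel-type comparison, which modifies the pair $(X_M,T_M)$ only by direct summands in $\add K_T$ and by isomorphisms of the sequences; applying $F$, exact on $\calc_1(\cu)$, turns this into identities in $\tMHL$, and since $[F(T_M)]\in\widetilde{\ct}(\LaK^\imath)$ depends only on the class of $T_M$ in $K_0$, the element $[F(T_M)]^{-1}*[F(X_M)]$ is unchanged. That $\BF$ is an algebra homomorphism is then checked by inserting coresolutions \eqref{T-resol} of $M$ and $N$ into $[M]*[N]$, using that $\calc_1(\cu)$ is closed under extensions (it is a torsion class) to keep the relevant middle terms inside $\calc_1(\cu)$, and invoking that the exact fully faithful functor $F$ preserves Hall numbers on $\calc_1(\cu)$ while the Euler twist is matched by the isometry above; compatibility with $[M^\bullet]=[H(M^\bullet)]*[K_{\Im d}]$ is the special case $N\in\add K_T$. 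For bijectivity one builds a two-sided inverse by the same recipe from the quasi-inverse equivalence and the dual coresolution of $\LaK^\imath$-modules by the image torsion class (with cokernel in $\add F(K_T)$), or, alternatively, checks that $\BF$ sends the $\imath$Hall basis of $\tMHX$ from Proposition~\ref{prop:hallbasis} to a basis of $\tMHL$.

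The step I expect to be the main obstacle is the bookkeeping with the localization at $\cs$: one must verify that the ``denominator'' elements $[F(T_M)]\in\widetilde{\ct}(\LaK^\imath)$ transform compatibly with the $K_0$-isomorphism induced by the derived equivalence — concretely, that the central elements $[K_\alpha]$ on the $\PL$ side go to the correct $[K_\beta]$ on the quiver side — so that the assignment descends to a well-defined map after inverting $\cs$ and remains multiplicative. This is precisely where the weakly $1$-Gorenstein formalism of \cite[Appendix A]{LW22} must be handled with care; once it is in place, the remaining verifications are routine adaptations of \cite{LP21, LW21a}.
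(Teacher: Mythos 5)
Your outline is correct and follows the same route as the paper: everything rests on Proposition~\ref{prop:tilt} (the tilting object $K_T$ with $\End(K_T)\cong\LaK^\imath$) together with the coresolution \eqref{T-resol}, and the formula $[M]\mapsto[F(T_M)]^{-1}*[F(X_M)]$ with $[F(T_M)]$ central in the quantum torus. The only difference is that the paper does not re-derive the Hall-algebra bookkeeping (well-definedness, multiplicativity, compatibility with the ideal $\cI$ and the localization at $\cs$) but simply invokes the general tilting-invariance theorem for semi-derived Ringel--Hall algebras of weakly $1$-Gorenstein exact categories, \cite[Theorem A.22]{LW22}, whose proof is exactly the argument you sketch.
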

	
	\begin{proof}
		Follows from \cite[Theorem A. 22]{LW22} with the help of Lemma~\ref{prop:tilt}.
	\end{proof}

	\subsection{Injectivity of $\Omega$}
	
	Recall the isomorphism $\Phi: \tUiD \rightarrow \tUi$ from Theorem~\ref{thm:UUiso}, the homomorphism $\Omega: \tUiD_{ |v=\sqq} \rightarrow \tMHX$ from Theorem~\ref{thm:morphi}, the monomorphism $\widetilde{\psi}: \tUi_{|v=\sqq} \rightarrow \tMHL$ from Proposition~\ref{prop:mono}, and the isomorphism $\BF:\tMHX \rightarrow \tMHL$ from Proposition~\ref{prop:F}.
	
	\begin{theorem}
		\label{main thm2}
		We have the following commutative diagram of algebra homomorphisms
		\[
		\xymatrix{\tUi_{ |v={\sqq}} \ar[d]^{\widetilde{\psi}} \ar[r]^{\Phi^{-1}}   & {}^{\text{Dr}}\tUi_{ |v=\sqq} \ar[d]^{\Omega}
			\\
			\tMHL \ar[r]^{\BF^{-1}} & \tMHX,  }\]
		where $\Phi, \,\BF$ are isomorphisms.
		In particular, the homomorphism $\Omega$ is injective.
	\end{theorem}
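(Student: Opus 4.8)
The plan is to prove that the square commutes, after which the injectivity of $\Omega$ is automatic: commutativity says $\Omega=\BF^{-1}\circ\widetilde{\psi}\circ\Phi$, a composite of the monomorphism $\widetilde{\psi}$ of Proposition~\ref{prop:mono} with the isomorphisms $\Phi$ and $\BF^{-1}$. Since all four maps in the square are $\Q(\sqq)$-algebra homomorphisms, it will be enough to check the identity $\BF\circ\Omega=\widetilde{\psi}\circ\Phi$ on a set of algebra generators of $\tUiD$, and the first step is to record that $\tUiD$ is generated by $\K_1^{\pm1}$, $C^{\pm1}$, $B_{1,0}$, $B_{1,1}$. Indeed, specializing \eqref{iDR3} to $r=s=0$ (and using $\Theta_{-1}=0$) exhibits $H_1=\Theta_1$ as a noncommutative polynomial in $B_{1,0}$, $B_{1,1}$, $\K_1^{-1}$; the relation \eqref{iDR2} with $m=1$ then determines every $B_{1,r}$ $(r\in\Z)$ recursively from $H_1$, $B_{1,0}$, $B_{1,1}$, $C^{\pm1}$; finally \eqref{eq:dB1}, read inside $\tUiD$ via $\Phi$ with $\K_0:=C\K_1^{-1}$ (so that $B_0=B_{1,-1}\K_0$ lies in the subalgebra generated), produces all $\Theta_m$, and \eqref{eq:exp1} produces all $H_m$. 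So the verification reduces to the four generators $\K_1$, $C$, $B_{1,0}$, $B_{1,1}$.

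For the central generators I will use that $\BF$ sends $[K_\alpha]$ to $[K_{\theta(\alpha)}]$ for $\alpha\in K_0(\PL)$, where $\theta\colon K_0(\PL)\xrightarrow{\sim}K_0(\bfk\QK)$ is the isomorphism induced by Beilinson's derived equivalence \eqref{Bei} (this is part of the structure of $\BF$ constructed in Proposition~\ref{prop:F} from \cite[Theorem~A.22]{LW22}). Since $\RHom_{\PL}(T,\co)\cong P_0$ and $\RHom_{\PL}(T,\co(1))\cong P_1$, with classes $\widehat{P_0}=\alpha_1$ and $\widehat{P_1}=\alpha_0+2\alpha_1$, one gets $\theta(\widehat{\co})=\alpha_1$ and $\theta(\de)=\alpha_0+\alpha_1$; hence $\BF\Omega(\K_1)=\BF([K_\co])=[K_{S_1}]=\widetilde{\psi}(\K_1)=\widetilde{\psi}\Phi(\K_1)$ and $\BF\Omega(C)=\BF([K_\de])=[K_{\alpha_0+\alpha_1}]=\widetilde{\psi}(\K_0\K_1)=\widetilde{\psi}(\K_\de)=\widetilde{\psi}\Phi(C)$.

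For $B_{1,0}$ and $B_{1,1}$ I will compute $\BF([\co])$ and $\BF([\co(1)])$. As $\co$ and $\co(1)$ are globally generated, they lie in $\cu=\Fac T$, so the stalk complexes $C_\co$ and $C_{\co(1)}$ lie in $\calc_1(\cu)=\Fac K_T$ (see the proof of Proposition~\ref{prop:tilt}); therefore in \eqref{T-resol} one may take $X_M=C_{\co(r)}$ and $T_M=0$, which gives $\BF([\co(r)])=[F(C_{\co(r)})]$ for $r=0,1$, with $F=\Hom(K_T,-)$. Using $\Hom_{\calc_1(\coh(\PL))}(K_Y,C_Z)\cong\Hom_{\coh(\PL)}(Y,Z)$, the underlying $\bfk\QK$-module of $F(C_{\co(r)})$ is $\Hom_{\PL}(T,\co(r))\cong P_r$ by Beilinson, while the loops $\varepsilon_0,\varepsilon_1\in\LaK^\imath=\End(K_T)$ act by $0$ on $F(C_{\co(r)})$ because $C_{\co(r)}$ has zero differential; hence $F(C_{\co(r)})\cong C_{P_r}$ in $\rep(\LaK^\imath)\cong\calc_1(\rep^{\rm nil}_\bfk(\QK))$. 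By Lemma~\ref{lem:proj-inj} this gives $\BF\Omega(B_{1,r})=-\frac{1}{q-1}\BF([\co(r)])=-\frac{1}{q-1}[P_r]=\widetilde{\psi}(B_{r\de+\alpha_1})=\widetilde{\psi}\Phi(B_{1,r})$ for $r=0,1$, completing the check on all four generators and hence the proof.

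The hard part will be the last identification: pinning down $F(C_{\co(r)})$ as the stalk complex $C_{P_r}$ as an $\LaK^\imath$-module, i.e.\ verifying that the $\varepsilon_i$-action vanishes and that the underlying $\bfk\QK$-module matches Beilinson's tilting under the chosen orientation of $\QK$; and making the generating-set reduction watertight, so that the negative real root vectors $B_{1,r}$, whose images under $\BF\Omega$ would instead require the nontrivial $\add K_T$-coresolution \eqref{T-resol} of $\co(r)\in\cv$, need never be treated directly.
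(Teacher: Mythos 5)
Your proposal is correct, and it reaches the commutativity by a genuinely different route from the paper. The paper verifies the equivalent identity $\Omega\circ\Phi^{-1}=\BF^{-1}\circ\widetilde\psi$ on the four Serre generators $B_0,B_1,\K_0,\K_1$ of $\tUi$; the only nontrivial case is $B_0$, where one must compute $\BF^{-1}([S_0])$. Since $S_0$ is not the image of a stalk complex supported in $\calc_1(\cu)$, the paper exhibits an explicit $1$-periodic complex $X^\bullet=\big(\co(1)\oplus\co^{\oplus2},\left(\begin{smallmatrix}0&h\\0&0\end{smallmatrix}\right)\big)$ with $\BF([X^\bullet])=[M^\bullet]$ and $[S_0]=[M^\bullet]*[K_{S_1^{\oplus2}}]^{-1}$, arriving at $\BF^{-1}([S_0])=[\co(-1)]*[K_{\de-\widehat{\co}}]$, which matches $\Omega(B_{1,-1}C\K_1^{-1})$. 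You instead check $\BF\circ\Omega=\widetilde\psi\circ\Phi$ on generators of the Drinfeld-side algebra, which forces you to first prove that $\K_1^{\pm1},C^{\pm1},B_{1,0},B_{1,1}$ generate $\tUiD$ --- a step the paper never needs --- but in exchange every Hall-algebra computation becomes trivial: $\co$ and $\co(1)$ are summands of the tilting sheaf, so $C_\co$, $C_{\co(1)}$, $K_\co$, $K_{\co(1)}$ all lie in $\calc_1(\cu)$, the coresolution \eqref{T-resol} is the identity, and $\BF$ reduces to $[F(-)]$ on them, with $F(C_{\co(r)})\cong C_{P_r}$ and $F(K_{\co(r)})\cong K_{P_r}$ exactly as you argue ($\Hom(K_T,C_Z)\cong\Hom(T,Z)$ with $\varepsilon_0,\varepsilon_1$ acting by zero). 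Your generating-set sketch is sound; note that one can obtain all $\Theta_m$ more directly from \eqref{iDR3} with $r=0$, $s=m-1$, using that $\K_1$ is central and invertible, without the detour through $\Phi$ and \eqref{eq:dB1}. One small caution when citing Lemma~\ref{lem:proj-inj}: there $P_n$ denotes the preprojective of dimension vector $n\de+\alpha_1$ (so $P_0=S_1$), which is the convention you use and the correct one for that lemma, whereas the paper's own proof of the theorem writes $P_0$ for the projective cover of $S_0$ (dimension vector $\de+\alpha_1$); keep your indexing consistent with the lemma as you have done.
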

	
	\begin{proof}
		In this proof, we denote $\BG =\BF^{-1}: \tMHL\stackrel{\sim}{\rightarrow} \tMHX$ for short.
		Note that $\BG([S_1])=[\co]$ and $\BG([P_0])=[\co(1)]$. Then
		$
		\Omega\circ\Phi^{-1}(B_1) =\Omega (B_0)=\frac{-1}{q-1}[\co],
		$ 
		and
		\begin{align*}
			\BG\circ\widetilde{\psi}(B_1)&=\frac{-1}{q-1}\BG ([S_1])=\frac{-1}{q-1}[\co]=\Omega\circ\Phi^{-1}(B_1).
		\end{align*}
		
		We have a short exact sequence $0\rightarrow S_1^{\oplus 2}\stackrel{f}{\rightarrow} P_0\stackrel{g}{\rightarrow} S_0\rightarrow0$, which gives rise to a short exact sequence
		\[
		0\longrightarrow K_{S_1^{\oplus2 }} \longrightarrow M^\bullet \longrightarrow S_0\longrightarrow0,
		\]
		where
		\[
		M^\bullet=\Big( P_0\oplus S_1^{\oplus 2}, \left(\begin{array}{ccc} 0& f \\0&0  \end{array}\right)\Big).
		\]
		Hence we have $[S_0]=[M^\bullet]*[K_{S_1^{\oplus2}}]^{-1}$.
		
		Since $\Hom_{\PL}(\co,\co(1))\cong \Hom_{\LaK^\imath}(S_1,P_0)$, there exists a unique $h: \co^{\oplus 2}\rightarrow \co(1)$ such that $\Hom(T,h)=f$.
		Then
		$\Hom(K_T,X^\bullet)=M^\bullet$, where
		\[
		X^\bullet= \Big( \co(1)\oplus \co^{\oplus 2} , \left(\begin{array}{ccc} 0& h \\0&0  \end{array}\right)\Big).
		\]
		We have $\BF([X^\bullet])=[M^\bullet]$ thanks to $X^\bullet\in\calc_1(\cu)$.
		Note that $h$ is surjective, and $\ker h\cong \co(-1)$. Then
		$[X^\bullet]=[\co(-1)]*[K_{\co(1)}]= [\co(-1)]*[K_{\de+\widehat{\co}}]$.
		Hence we have
		\begin{align*}
			\BG([S_0])=\BG([M^\bullet]*[K_{S_1^{\oplus2}}]^{-1})=[\co(-1)]*[K_{\de+\widehat{\co}}]*[K_{-2\widehat{\co}}]=[\co(-1)]*[K_{\de-\widehat{\co}}].
		\end{align*}
		Therefore, we have
		\begin{align*}
			\Omega\circ\Phi^{-1}(B_0)&=\Omega (\y_{1,-1}C \K_1^{-1})=\frac{-1}{q-1}[\co(-1)]*[K_{\de-\widehat{\co}}]
			\\
			&=\frac{-1}{q-1}\BG([S_0])=\BG\circ\widetilde{\psi}(B_0).
		\end{align*}
		
		Finally, we verify that
		\[
		\Omega\circ\Phi^{-1}(\K_1)=\Omega(\K_1)= [K_\co]=  \BG([K_{S_1}])=\BG\circ \widetilde{\psi}(\K_1),
		\]
		and
		\begin{align*}
			\Omega\circ\Phi^{-1}(\K_{0})=&\Omega(C \K_1^{-1})= [K_\de]*[K_\co]^{-1}
			\\
			=& [K_{\co(1)}]*[K_{\co}]^{-2}
			= \BG([K_{P_0}]*[K_{S_1}]^{-2})
			\\
			=&\BG([K_{S_0}]) =\BG\circ\widetilde{\psi}(\K_0).
		\end{align*}
		Summarizing, we have proved $\Omega\circ\Phi^{-1}= \BG\circ\widetilde{\psi}$.
		
		The injectivity of $\Omega$ follows by the injectivity of $\widetilde{\psi}$ and the commutative diagram.
	\end{proof}
	
	\begin{corollary}
		The algebra isomorphism $\BF: \tMHX\stackrel{\sim}{\rightarrow} \tMHL$ restricts to an isomorphism of the composition  algebras
		$\BF: \tCMHP\stackrel{\simeq}{\rightarrow} \tCMH$.
		We have the following commutative diagram of isomorphisms:
		\[
		\xymatrix{ \tUiD_{|_{v=\sqq}} \ar[d]^{\Omega} \ar[r]^{\Phi}   & \tUi_{|_{v=\sqq}} \ar[d]^{\widetilde{\psi}}
			\\
			\tCMHP \ar[r]^{\BF} & \tCMH.  }\]
	\end{corollary}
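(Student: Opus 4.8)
The plan is to realize both composition algebras as images of homomorphisms already at hand, and then transport one onto the other along the isomorphism $\BF$.

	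First I would check that $\tCMHP$ is nothing but the image $\Omega\big(\tUiD_{|_{v=\sqq}}\big)$. Indeed, $\tUiD_{|_{v=\sqq}}$ is generated by $\K_1^{\pm1},C^{\pm1}$, the $\Theta_m$ $(m\ge1)$ and the $B_{1,r}$ $(r\in\Z)$, since the $\Theta_m$ and the $H_m$ generate the same subalgebra by \eqref{eq:exp1}. By Theorem~\ref{thm:morphi} their images under $\Omega$ are $[K_\co]^{\pm1},[K_\de]^{\pm1},\haT_m$ and $-\tfrac{1}{q-1}[\co(r)]$. As $-\tfrac{1}{q-1}$ is an invertible scalar and, by Proposition~\ref{prop:hallbasis}, the quantum torus $\widetilde{\ct}(\coh(\PL))$ is the group algebra of $K_0(\PL)=\Z\widehat{\co}\oplus\Z\de$, so that $[K_\alpha]=[K_\co]^{a}*[K_\de]^{b}$ for $\alpha=a\widehat{\co}+b\de$, this list generates the same subalgebra, namely $\tCMHP$. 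Dually, $\widetilde{\psi}$ carries the generators $B_0,B_1,\K_0^{\pm1},\K_1^{\pm1}$ of $\tUi_{|_{v=\sqq}}$ onto scalar multiples of $[S_0],[S_1]$ and onto $[K_{S_0}]^{\pm1},[K_{S_1}]^{\pm1}$; hence $\tCMH=\widetilde{\psi}\big(\tUi_{|_{v=\sqq}}\big)$, and since $\widetilde{\psi}$ is a monomorphism it restricts to an isomorphism $\tUi_{|_{v=\sqq}}\xrightarrow{\ \sim\ }\tCMH$.

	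Next I would feed this into Theorem~\ref{main thm2}, whose commuting square reads $\Omega\circ\Phi^{-1}=\BF^{-1}\circ\widetilde{\psi}$, equivalently $\BF\circ\Omega=\widetilde{\psi}\circ\Phi$. Applying both sides to $\tUiD_{|_{v=\sqq}}$ and using that $\Phi$ is onto gives
	\[
	\BF(\tCMHP)=\BF\big(\Omega(\tUiD_{|_{v=\sqq}})\big)=\widetilde{\psi}\big(\Phi(\tUiD_{|_{v=\sqq}})\big)=\widetilde{\psi}\big(\tUi_{|_{v=\sqq}}\big)=\tCMH .
	\]
	Since $\BF\colon\tMHX\xrightarrow{\sim}\tMHL$ is an isomorphism, its restriction to $\tCMHP$ is then an isomorphism onto $\tCMH$. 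The displayed square in the corollary is exactly the relation $\BF\circ\Omega=\widetilde{\psi}\circ\Phi$ read between the composition subalgebras; its horizontal maps $\Phi$ (Theorem~\ref{thm:UUiso}) and $\BF$ (just proved) are isomorphisms, $\widetilde{\psi}$ restricted to $\tCMH$ is an isomorphism by the previous paragraph, and hence so is $\Omega$ restricted to $\tCMHP$, being $\BF^{-1}\circ\widetilde{\psi}\circ\Phi$.

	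I do not expect any genuine obstacle here: the corollary is a formal consequence of Theorem~\ref{main thm2} together with the explicit formulas for the generators in Theorem~\ref{thm:morphi} and Proposition~\ref{prop:mono}. The only point meriting a line of care is the matching of generating sets in the second paragraph, i.e.\ that the $[K_\alpha]$ for $\alpha\in K_0(\PL)$ are generated by $[K_\co]^{\pm1}$ and $[K_\de]^{\pm1}$, which is immediate from the group-algebra structure of the quantum torus in Proposition~\ref{prop:hallbasis}.
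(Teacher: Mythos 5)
Your proposal is correct and is precisely the formal deduction the paper intends (it states this corollary without proof, as an immediate consequence of Theorem~\ref{main thm2}): identify $\tCMHP$ and $\tCMH$ as the images of $\Omega$ and $\widetilde{\psi}$ via the explicit generators, then transport along $\BF\circ\Omega=\widetilde{\psi}\circ\Phi$. The one point you rightly flag — that the $[K_\alpha]$ are generated by $[K_\co]^{\pm1},[K_\de]^{\pm1}$, and that the $\haT_m$ and $\haH_m$ generate the same subalgebra over the quantum torus — is handled correctly.
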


	\section{A Hall algebra realization of imaginary root vectors}
	\label{sec:T}

	In this section, we provide a Hall algebra interpretation $\haH_m$ of the generators $H_m$ in the Drinfeld type presentation of the $q$-Onsager algebra.
	
	%
	
	Recall $\haT_m \in \tMHX$ from \eqref{def:Theta} is the image of $\Theta_m$ under the monomorphism $\Omega: \tUiD_{|_{v=\sqq}} \rightarrow \tMHX$; cf. Theorem~\ref{thm:morphi}. In light of \eqref{eq:exp1}, the elements
	\[
	\haH_m :=\Omega(H_m),
	\qquad \text{ for } m\geq1,
	\]
	must satisfy
	\begin{align}
		\label{eq:Tr}
		1+ \sum_{m\geq 1} (\sqq-\sqq^{-1})\haT_{m} z^m  = \exp\big( (\sqq-\sqq^{-1}) \sum_{m\ge 1} \haH_m z^m \big).
	\end{align}
	
	
	
	
	\begin{lemma}
		\label{lem:TO2}
		For $m\ge 1$ and $r\in \Z$, we have
		\begin{align}  \label{eq:HO3}
			[\haH_{m},[\co(r)]]= \frac{[2m]}{m} [\co(m+r)]- \frac{[2m]}{m} [\co(m-r)]* [K_{m\delta}].
		\end{align}
	\end{lemma}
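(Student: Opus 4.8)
The plan is to deduce \eqref{eq:HO3} from the fact, already established in Theorem~\ref{thm:morphi}, that $\Omega\colon\tUiD_{|_{v=\sqq}}\to\tMHX$ is a $\Q(\sqq)$-algebra homomorphism, rather than by a fresh Hall-algebra computation. First one notes that $\haH_m$ is well defined: since the torsion generators $\haT_j$ pairwise commute (Proposition~\ref{prop:commTheta}), the generating-function identity \eqref{eq:Tr} determines $\haH_m$ unambiguously as a $\Q(\sqq)$-linear combination of products $\haT_{m_1}*\cdots*\haT_{m_k}$ with $m_1+\cdots+m_k=m$; moreover, because $\Omega$ is a homomorphism carrying $\Theta_j\mapsto\haT_j$ and \eqref{eq:exp1} is the $\tUiD$-analogue of \eqref{eq:Tr}, this $\haH_m$ is exactly $\Omega(H_m)$. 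Hence $\Omega$ is compatible with the presentation of $\tUiD$ in the generators $\K_1^{\pm1},C^{\pm1},H_m,\y_{1,r}$.

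Now apply $\Omega$ to the defining relation \eqref{iDR2}. Using $\Omega(\y_{1,r})=-\tfrac{1}{q-1}[\co(r)]$ for all $r\in\Z$, $\Omega(H_m)=\haH_m$, and $\Omega(C)^m=[K_\de]^m=[K_{m\de}]$ (the latter by the multiplicativity and centrality of the quantum torus, Proposition~\ref{prop:hallbasis}, together with the identification of $\de$ coming from \eqref{eq:delta}), the left-hand side of \eqref{iDR2} becomes $-\tfrac{1}{q-1}[\haH_m,[\co(r)]]$ and the right-hand side becomes $-\tfrac{1}{q-1}$ times the right-hand side of \eqref{eq:HO3}; clearing the common scalar $-(q-1)$ yields \eqref{eq:HO3}. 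Alternatively, and in a way that only invokes the computations already carried out in Section~\ref{sec:HallOn}: by Lemma~\ref{lem:equiv}(2) the relation \eqref{iDR2} is \emph{formally} equivalent to \eqref{eq:hB1}, this equivalence being a consequence of the generating series \eqref{eq:exp1} together with the commutativity \eqref{eq:hh1} (it is the standard change of generators between complete homogeneous and power-sum symmetric functions); since we have verified in $\tMHX$ both the commutativity of the $\haT_m$ (Proposition~\ref{prop:commTheta}) and the identity \eqref{TOTO}, which is \eqref{eq:hB1} transported along $\Omega$ (Proposition~\ref{prop:ThetaTheta}), the same formal manipulation shows that the $\haH_m$ of \eqref{eq:Tr} satisfy \eqref{eq:HO3}.

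I do not expect a genuine obstacle for this particular statement: all the substantive Hall-algebra input---the products $\haT_m*[\co(r)]$ and $[\co(r)]*\haT_m$ of Propositions~\ref{prop:ThetaO} and \ref{prop:OTheta}, the commutator identity of Proposition~\ref{prop:ThetaTheta}, and the commutativity of Proposition~\ref{prop:commTheta}---has already been assembled, and \eqref{eq:HO3} is their formal consequence. The only point requiring care is the bookkeeping of the central factor: it is $[K_{m\de}]$ (equivalently $[K_\de]^m$) that appears, matching the grading-shift description \eqref{eq:delta} of $\de$, which is precisely the ``subtle $K_\de$ summand'' flagged in the introduction. The genuinely computational part of Section~\ref{sec:T} is not Lemma~\ref{lem:TO2} but the ensuing explicit formula for $\haH_m$ in terms of torsion sheaves (Proposition~\ref{prop:HaH}), for which \eqref{eq:HO3} will serve as the characterizing identity to be matched against.
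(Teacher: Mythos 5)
Your proposal is correct and matches the paper's proof: the paper likewise deduces \eqref{eq:HO3} from \eqref{TOTO} via the formal equivalence of \eqref{iDR2} and \eqref{eq:hB1} furnished by \eqref{eq:exp1}, which is exactly your second argument (your first, applying $\Omega$ directly to \eqref{iDR2} using that it is already known to be a homomorphism, is an equivalent reformulation). Note only that applying $\Omega$ to \eqref{iDR2} produces $[\co(r-m)]$, so the $[\co(m-r)]$ appearing in the statement of \eqref{eq:HO3} is evidently a typo for $[\co(r-m)]$.
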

	
	\begin{proof}
		Note that the equivalence between \eqref{iDR2} and \eqref{eq:hB1} follows from the identity \eqref{eq:exp1} formally, as shown in \cite{LW21b}. In the same way, the identity \eqref{eq:HO3} is equivalent to \eqref{TOTO}. 
	\end{proof}
	
	
	We shall describe the elements $\haH_m$.
	Recall that any indecomposable object in $\tor_x(\PL)$, for $x\in\PL$, has the form $S_x^{(n)}$ of length $n\geq 1$. For any partition $\lambda=(\lambda_1,\dots,\lambda_r)$, define
	\[S_x^{(\lambda)}:=S_x^{(\lambda_1)}\oplus\cdots \oplus S_x^{(\lambda_r)}. \]
	
	For any $x\in\PL$ and $m\geq 1$, we define (compare \cite{LRW21})
	\begin{align}  \label{eq:PTh}
		\haP_{m,x}:=\sum\limits_{\lambda\vdash m} n_x(\ell(\lambda)-1)\frac{[S_x^{(\lambda)}]}{|\Aut(S_x^{(\lambda)})|},
		\qquad
		\haT_{m,x}:=\frac{[S_x^{(m)}]}{\sqq_x-\sqq_x^{-1}},
	\end{align}
	where
	\[
	n_x(l)=\prod_{i=1}^l(1-\sqq_x^{2i})=\prod_{i=1}^l(1-q_x^{i}).
	\]
	Introduce the generating functions
	\begin{align}
		\haP_x(z):=&\sum_{m\geq1} \haP_{m,x} z^{m-1},
		\label{eq:Px} \\
		\haT_x(z):=&1+\sum_{m\geq1} (\sqq_x-\sqq_x^{-1})\haT_{m,x} z^{m}
		=1+\sum_{m\geq1} [S_x^{(m)}]z^{m}.
		\label{eq:Thx}
	\end{align}
	
	\begin{lemma}[\cite{LRW21}]
		We have
		\begin{align}\label{exp pression between theta and Hm}
			\haT_x(z)  =\exp\big( (\sqq_x-\sqq_x^{-1}) \sum_{m\ge 1} \haH_{m,x} z^{m} \big),
		\end{align}
		where
		\begin{align}  \label{eq:Tx}
			\haH_{m,x} =\sqq_x^{m} \frac{[m]_{\sqq_x}}{m} \haP_{m,x} -\delta_{m, ev} \sqq_x^{\frac{m}{2}} \frac{[m/2]_{\sqq_x}}{m} [K_{\frac{m}{2}d_x\de}].
		\end{align}
	\end{lemma}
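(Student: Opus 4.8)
The plan is to prove \eqref{exp pression between theta and Hm} entirely inside the $\imath$Hall algebra ${}^\imath\widetilde{\ch}(\bfk_x\QJ)$ of the Jordan quiver over the residue field $\bfk_x$. By Lemma~\ref{lem:comm Jordan} this algebra is commutative, and via the equivalence $\tor_x(\PL)\simeq\rep^{\rm nil}_{\bfk_x}(\QJ)$ it contains all the elements $\haT_{m,x},\haP_{m,x},\haH_{m,x}$ and $[K_{\frac m2 d_x\de}]$ appearing in \eqref{eq:PTh}--\eqref{eq:Tx}; moreover the statement depends on $x$ only through the field $\bfk_x$, so I may abbreviate $q=q_x$, $\sqq=\sqq_x$, and write $\kappa:=[K_{S_x}]$ for the relevant central invertible generator of the quantum torus, so that $[K_{\frac m2 d_x\de}]=\kappa^{m/2}$. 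Using the elementary identities $(\sqq-\sqq^{-1})\sqq^m[m]_{\sqq}=q^m-1$ and $(\sqq-\sqq^{-1})\sqq^{m/2}[m/2]_{\sqq}=q^{m/2}-1$, formula \eqref{eq:Tx} gives
\begin{align*}
(\sqq-\sqq^{-1})\sum_{m\ge1}\haH_{m,x}z^m
&=\sum_{m\ge1}\frac{q^m-1}{m}\,\haP_{m,x}\,z^m-\sum_{l\ge1}\frac{q^l-1}{2l}\,\kappa^l z^{2l}\\
&=\sum_{m\ge1}\frac{q^m-1}{m}\,\haP_{m,x}\,z^m+\log\Big(\tfrac{1-q\kappa z^2}{1-\kappa z^2}\Big)^{1/2},
\end{align*}
so \eqref{exp pression between theta and Hm} is equivalent to the single generating-function identity
\[
1+\sum_{m\ge1}[S_x^{(m)}]\,z^m\;=\;\Big(\tfrac{1-q\kappa z^2}{1-\kappa z^2}\Big)^{1/2}\exp\Big(\sum_{m\ge1}\tfrac{q^m-1}{m}\,\haP_{m,x}\,z^m\Big)
\]
in ${}^\imath\widetilde{\ch}(\bfk_x\QJ)[[z]]$.

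To prove this I would apply $z\frac{d}{dz}$ (legitimate in the commutative power-series ring): writing $A(z)$ for the left-hand side, the identity $A=\exp(G)$ with $A(0)=1$, $G(0)=0$ is equivalent to $zA'(z)=\big(zG'(z)\big)A(z)$, and extracting the coefficient of $z^m$ turns it into the family of recursions
\[
m\,[S_x^{(m)}]\;=\;\sum_{k=1}^{m}(q^k-1)\,\haP_{k,x}*[S_x^{(m-k)}]\;-\;\sum_{1\le l\le m/2}(q^l-1)\,\kappa^l*[S_x^{(m-2l)}]\qquad(m\ge1),
\]
with $[S_x^{(0)}]:=1$; conversely these recursions together with $A(0)=1$ recover the identity. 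The $\kappa$-terms here are harmless since $\kappa$ is central. For the first sum I would expand $\haP_{k,x}$ via its definition \eqref{eq:PTh} and compute each product $[S_x^{(\lambda)}]*[S_x^{(j)}]$: by Lemma~\ref{lem:Ext=Ext+Hom} the space $\Ext^1_{\calc_1}\big(C_{S_x^{(\lambda)}},C_{S_x^{(j)}}\big)$ splits into an honest-extension part, controlled by the (well-known) Hall polynomials of the Jordan quiver and the uniserial structure recorded in \eqref{ext:SS}, and a $\Hom$-part; by Lemma~\ref{lem:Cf} each $f\colon S_x^{(\lambda)}\to S_x^{(j)}$ contributes a term $[\ker f\oplus\coker f]*[K_{\Im f}]$, namely a torsion class twisted by a power of $\kappa$. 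After substituting these closed formulas, the recursion becomes a finite combinatorial identity relating the weights $n_x(\ell)=\prod_{i=1}^{\ell}(1-q^i)$ to the Hall numbers, which one checks directly.

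I expect the real obstacle to be the bookkeeping of the $\kappa$-twists produced by the $\Hom$-part of the $\imath$Hall product. The ``rank-preserving'' contribution is in essence the classical, correction-free identity in the Ringel--Hall algebra of $\rep^{\rm nil}_{\bfk_x}(\QJ)$ expressing $1+\sum_m[S_x^{(m)}]z^m$ as an exponential of power-sum-type elements --- a reformulation of the expansion of Hall--Littlewood symmetric functions in the power-sum basis, with $\haP_{m,x}$ in the power-sum role and the weights $n_x(\ell-1)$ furnishing the transition coefficients; granting that, the $\kappa$-free part of each recursion follows formally. What is genuinely new is that, in $\calc_1(\tor_x(\PL))$, the homomorphism-twists assemble precisely into the extra factor $\big((1-q\kappa z^2)/(1-\kappa z^2)\big)^{1/2}$, equivalently into the correction $-\,\delta_{m, ev}\,\sqq^{m/2}\frac{[m/2]_{\sqq}}{m}[K_{\frac m2 d_x\de}]$ in \eqref{eq:Tx}; verifying that these terms collect and exponentiate exactly into a square root --- so that only even $m$ are affected, and with the stated coefficient --- is the delicate step, entirely in the spirit of the subtle $K_\de$-summands occurring elsewhere in this paper (cf. Proposition~\ref{prop:ThetaTheta} and the formula for $\haH_m$ in Section~\ref{sec:T}). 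An alternative, and likely cleaner, route --- the one carried out in the companion paper \cite{LRW21} --- would be to first establish a closed ``quantum-binomial'' formula for the products $[S_x^{(m)}]*[S_x^{(n)}]$ in ${}^\imath\widetilde{\ch}(\bfk_x\QJ)$ and then deduce \eqref{exp pression between theta and Hm} by formal power-series manipulations.
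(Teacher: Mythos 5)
First, a point of reference: the paper does not prove this lemma at all --- it is imported wholesale from the companion paper \cite{LRW21} on the $\imath$Hall algebra of the Jordan quiver (the introduction states explicitly that appealing to \emph{loc.\ cit.} is what allows the proof of Proposition~\ref{prop:HaH} to be shortened). So there is no in-paper argument to compare against, and your proposal must stand on its own. Your formal reductions are correct: the identities $(\sqq_x-\sqq_x^{-1})\sqq_x^m[m]_{\sqq_x}=q_x^m-1$ and $(\sqq_x-\sqq_x^{-1})\sqq_x^{m/2}[m/2]_{\sqq_x}=q_x^{m/2}-1$ are right, the passage to the single generating-function identity with the factor $\bigl((1-q\kappa z^2)/(1-\kappa z^2)\bigr)^{1/2}$ (with your abbreviations $q=q_x$, $\kappa=[K_{S_x}]=[K_{d_x\de}]$) is right, and the logarithmic-derivative recursion
\[
m\,[S_x^{(m)}]\;=\;\sum_{k=1}^{m}(q^k-1)\,\haP_{k,x}*[S_x^{(m-k)}]\;-\;\sum_{1\le l\le m/2}(q^l-1)\,\kappa^l*[S_x^{(m-2l)}]
\]
is a legitimate equivalent reformulation of \eqref{exp pression between theta and Hm} in the commutative algebra ${}^\imath\widetilde{\ch}(\bfk_x\QJ)[[z]]$.

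The gap is that this recursion is never verified, and it \emph{is} the lemma. Everything of substance --- why the weights $n_x(\ell(\lambda)-1)$ in \eqref{eq:PTh} are the correct coefficients once the $\Hom$-part of $\Ext^1_{\calc_1}(C_M,C_N)$ (Lemma~\ref{lem:Ext=Ext+Hom}) is taken into account, and why the resulting $\kappa$-twists from Lemma~\ref{lem:Cf} sum to exactly $-\sum_{l\ge1}\frac{q^l-1}{2l}\kappa^l z^{2l}$ and nothing else --- is deferred to ``a finite combinatorial identity \dots which one checks directly'' and ``the delicate step.'' Neither is routine: the products $\haP_{k,x}*[S_x^{(m-k)}]$ are not classical Hall products, since every nonzero morphism $f\colon S_x^{(\lambda)}\to S_x^{(m-k)}$ contributes a term $[\ker f\oplus\coker f]*[K_{\Im f}]$ whose torsion part lands back among the $[S_x^{(\mu)}]$ with a $\kappa$-power attached, so the ``rank-preserving'' and ``$\kappa$-twisted'' contributions cannot be separated and handled independently as your sketch suggests; one cannot simply quote the Hall--Littlewood/power-sum expansion for the untwisted part and bolt on a correction. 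Your proposal correctly locates the difficulty and correctly names where it is resolved (\cite{LRW21}), but as written it is a reduction plus a plan, not a proof.
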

	
	Recall $\haT_m$ from \eqref{def:Theta}.
	Define the generating function
	\begin{align}  \label{eq:haT}
		\haT(z) =1 + \sum_{m\ge 1} (\sqq -\sqq^{-1}) \haT_m z^m.
	\end{align}
	Since the categories $\tor_x(\PL)$ for $x \in \PL$ are orthogonal, by \eqref{def:Theta} and \eqref{eq:Thx} we have
	\begin{align*}
		\haT(\sqq z) =1+\sum_{m\geq 1}\sum_{||\bn||=m}[S_{\bn}]z^m=\prod_{x\in \PL} \haT_x(z^{d_x}).\\
	\end{align*}

	

	
	Now we establish the main result of this section.
	
	\begin{proposition}
		\label{prop:HaH}
		For $m\geq1$, we have
		\begin{align}\label{formula for Hm}
			\haH_m
			=&\sum_{x,d_x|m} \frac{[m]}{m} d_x \sum_{|\lambda|=\frac{m}{{d_x}}} n_x(\ell(\lambda)-1)\frac{[S_x^{(\lambda)}]}{\big|\aut(S_x^{(\lambda)})\big|}  -\de_{m,ev}\frac{[m]}{m} [K_{\frac{m}{2}\de}].
		\end{align}
	\end{proposition}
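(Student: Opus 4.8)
\emph{Strategy.} Since the $\haT_m$ pairwise commute by Proposition~\ref{prop:commTheta}, all the manipulations below take place inside the commutative subalgebra of $\tMHX$ generated by the $\haT_m$ together with the central $[K_\alpha]$, completed with respect to the degree grading, where $\exp$ and $\log$ of series with positive-degree leading term make sense. The defining relation \eqref{eq:Tr} of the $\haH_m$ (in terms of the generating function $\haT(z)$ of \eqref{eq:haT}) reads $(\sqq-\sqq^{-1})\sum_{m\ge1}\haH_m z^m=\log\haT(z)$. The plan is: replace $z$ by $\sqq z$, use the factorization $\haT(\sqq z)=\prod_{x\in\PL}\haT_x(z^{d_x})$ established just before the statement together with the single-point exponential formula $\haT_x(z)=\exp\big((\sqq_x-\sqq_x^{-1})\sum_{k\ge1}\haH_{k,x}z^k\big)$ of \cite{LRW21}, and use that $\log$ converts the (degreewise finite) product into a sum. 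Extracting the coefficient of $z^m$ then yields the identity
\[
(\sqq-\sqq^{-1})\,\sqq^{m}\,\haH_m=\sum_{x\in\PL:\ d_x\mid m}(\sqq_x-\sqq_x^{-1})\,\haH_{m/d_x,\,x}.
\]

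\emph{Extracting the two contributions.} Into this identity I would substitute the explicit formula \eqref{eq:Tx} for $\haH_{m/d_x,\,x}$ and simplify via $\sqq_x=\sqq^{d_x}$. For the $\haP$-part of \eqref{eq:Tx}, one has $\sqq_x^{m/d_x}=\sqq^{m}$ and $(\sqq_x-\sqq_x^{-1})[m/d_x]_{\sqq_x}=\sqq^{m}-\sqq^{-m}=(\sqq-\sqq^{-1})[m]$, so after dividing the displayed identity by $(\sqq-\sqq^{-1})\sqq^{m}$ this part contributes $\sum_{x:\,d_x\mid m}\frac{[m]}{m}\,d_x\,\haP_{m/d_x,\,x}$, which by the definition \eqref{eq:PTh} of $\haP_{m/d_x,\,x}$ is precisely the first sum of \eqref{formula for Hm}. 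For the $[K_{\frac{m}{2d_x}d_x\de}]$-part of \eqref{eq:Tx}, only the $x$ with $2d_x\mid m$ contribute (so the whole term is absent unless $m$ is even); using $[K_{\frac{m}{2d_x}d_x\de}]=[K_{\frac m2\de}]$, $\sqq_x^{m/(2d_x)}=\sqq^{m/2}$ and $(\sqq_x-\sqq_x^{-1})[m/(2d_x)]_{\sqq_x}=\sqq^{m/2}-\sqq^{-m/2}$, this part collapses to $-\frac1m(\sqq^{m}-1)\big(\sum_{x:\,d_x\mid m/2}d_x\big)[K_{\frac m2\de}]$.

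\emph{The $K$-term and the arithmetic input.} To finish I would invoke the point-counting identity \eqref{eq:FF} in the form $\sum_{x:\,d_x\mid m/2}d_x=q^{m/2}+1=\sqq^{m}+1$, whence $(\sqq^{m}-1)(\sqq^{m}+1)=q^{m}-1=(\sqq-\sqq^{-1})\sqq^{m}[m]$; dividing by $(\sqq-\sqq^{-1})\sqq^{m}$ gives the contribution $-\de_{m,ev}\frac{[m]}{m}[K_{\frac m2\de}]$, matching the second term of \eqref{formula for Hm}. Granting the single-point formula \eqref{eq:Tx} of \cite{LRW21}, the proof is then a generating-function bookkeeping whose only non-formal ingredients are (i) Proposition~\ref{prop:commTheta} (so that $\exp/\log$ are well defined), (ii) the factorization $\haT(\sqq z)=\prod_{x}\haT_x(z^{d_x})$, and (iii) the identity \eqref{eq:FF}. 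I do not expect a real obstacle; the main care required is the bookkeeping of the substitution $z\mapsto\sqq z$, of the base change between $\sqq$-integers and $\sqq_x$-integers, and of the identification $[K_{\frac{m}{2d_x}d_x\de}]=[K_{\frac m2\de}]$.
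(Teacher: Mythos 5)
Your proposal is correct and follows essentially the same route as the paper: factor $\haT(\sqq z)=\prod_x\haT_x(z^{d_x})$, insert the single-point exponential formula \eqref{eq:Tx} from \cite{LRW21}, compare exponents (equivalently, take $\log$ and extract the coefficient of $z^m$), and evaluate the two contributions via the base change $\sqq_x=\sqq^{d_x}$ and the point-count \eqref{eq:FF}. The only cosmetic difference is that you phrase the comparison as an explicit logarithm, whereas the paper equates the two exponentials and reduces to the identity of exponents (\eqref{eq:derived}); the computations are identical.
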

	
	\begin{proof}
		By \eqref{exp pression between theta and Hm},
		we have
		\begin{align*}
			\haT(\sqq z)=&\prod_{x\in \PL} \haT_x(z^{d_x})\\
			=&\prod_{x\in \PL} \exp\big( (\sqq_x-\sqq_x^{-1}) \sum_{m\ge 1} \haH_{m,x} z^{md_x} \big)\\
			=&\exp\big(\sum_{x\in \PL} (\sqq_x-\sqq_x^{-1}) \sum_{m\ge 1} \haH_{m,x} z^{md_x} \big)\\
			=&\exp\big(\sum_{m\geq 1}  \sum_{x, d_x|m} (\sqq_x-\sqq_x^{-1})\haH_{\frac{m}{d_x},x} z^{m} \big)\\
			=&\exp\big(  \sum_{m\ge 1}\sum_{x, d_x|m}(\sqq_x-\sqq_x^{-1}) \big(\sqq^{m} \frac{[m/d_x]_{\sqq_x}}{m/d_x} \haP_{\frac{m}{d_x},x}-\delta_{\frac{m}{d_x}, ev} \sqq^{\frac{m}{2}} \frac{[m/(2d_x)]_{\sqq_x}}{m/d_x} [K_{\frac{m}{2}\de}]\big) z^m\big).
		\end{align*}
		Observe from \eqref{eq:Tr} that
		\begin{align*}
			\exp\big( (\sqq-\sqq^{-1}) \sum_{m\ge 1} \haH_m z^m \big)
			=&1+ \sum_{m\geq 1} (\sqq-\sqq^{-1})\haT_{m} z^m = \haT(z).
		\end{align*}
		So it suffices to show that
		\begin{align}
			\label{eq:derived}
			&{\sqq^{-m}}\sum_{x, d_x|m} (\sqq_x-\sqq_x^{-1})\big(\sqq^{m} \frac{[m/d_x]_{\sqq_x}}{m/d_x} \haP_{\frac{m}{d_x},x} -\delta_{\frac{m}{d_x}, ev} \sqq^{\frac{m}{2}} \frac{[m/(2d_x)]_{\sqq_x}}{m/d_x} [K_{\frac{m}{2}\de}]\big)\\\notag
			&={(\sqq-\sqq^{-1})}\Big(\sum_{x,d_x|m} \frac{[m]}{m} d_x \sum_{|\lambda|=\frac{m}{{d_x}}} n_x(\ell(\lambda)-1)\frac{[S_x^{(\lambda)}]}{\big|\aut(S_x^{(\lambda)})\big|}  -\de_{m,ev}\frac{[m]}{m} [K_{\frac{m}{2}\de}]\Big).
		\end{align}
		
		Recalling $\haP_{m,x}$ from \eqref{eq:PTh}, we have
		\begin{align*}
			&\sqq^{-m}\sum_{x, d_x|m}(\sqq_x-\sqq_x^{-1}) \sqq^{m} \frac{[m/d_x]_{\sqq_x}}{m/d_x} \haP_{\frac{m}{d_x},x} \\
			&=\sum_{x, d_x|m}  \frac{\sqq^m-\sqq^{-m}}{m} d_x \sum\limits_{\lambda\vdash \frac{m}{d_x}} n_x(\ell(\lambda)-1)\frac{[S_x^{(\lambda)}]}{|\Aut(S_x^{(\lambda)})|} \\
			&=(\sqq-\sqq^{-1})\sum_{x,d_x|m} \frac{[m]}{m} d_x \sum_{|\lambda|=\frac{m}{{d_x}}} n_x(\ell(\lambda)-1)\frac{[S_x^{(\lambda)}]}{\big|\aut(S_x^{(\lambda)})\big|}.
		\end{align*}
		Moreover, using \eqref{eq:FF}, we compute
		\begin{align*}
			&\sqq^{-m}\sum_{x, d_x|m}  (\sqq_x-\sqq_x^{-1})\delta_{\frac{m}{d_x}, ev}\sqq^{\frac{m}{2}} \frac{[m/(2d_x)]_{\sqq_x}}{m/d_x} [K_{\frac{m}{2}\de}]\\
			&=\sum_{x, d_x|m} \delta_{\frac{m}{d_x}, ev} \sqq^{-\frac{m}{2}} \frac{\sqq^{\frac{m}{2}}-\sqq^{-\frac{m}{2}}}{m} d_x [K_{\frac{m}{2}\de}]
			= \delta_{m, ev} \frac{1-\sqq^{-m}}{m} \sum_{x, d_x|\frac{m}{2}}d_x [K_{\frac{m}{2}\de}]\\
			&= \delta_{m, ev} \frac{1-\sqq^{-m}}{m} (1+q^{\frac{m}{2}}) [K_{\frac{m}{2}\de}]
			=(\sqq-\sqq^{-1})\de_{m,ev}\frac{[m]}{m} [K_{\frac{m}{2}\de}].
		\end{align*}
		So \eqref{eq:derived} holds. We are done.
	\end{proof}



\begin{thebibliography}{BGP73}
		
		
		\bibitem[BW18]{BW18}
		H. Bao and W. Wang,
		{\em A new approach to Kazhdan-Lusztig theory of type $B$ via quantum symmetric pairs},  Ast\'erisque {\bf 402}, 2018, vii+134pp, \href{http://arxiv.org/abs/1310.0103}{arXiv:1310.0103v2}
		
		\bibitem[BK20]{BK20} P. ~Baseilhac and S.~Kolb,
		{\em Braid group action and root vectors for the $q$-Onsager algebra}, Transform. Groups {\bf 25} (2020),  363--389. 
		
		\bibitem[BKa01]{BKa01} P. Baumann and C. Kassel,
		{\em The Hall algebra of the category of coherent sheaves on the projective line},
		J. reine angew. Math. {\bf 533} (2001), 207--233.
		
		\bibitem[Be94]{Be94} J. Beck, {\em Braid group actions and quantum affine algebras}, Commun. Math. Phys. {\bf 165} (1994), 555--568.
		
		\bibitem[Bei79]{Bei79} A. Beilinson, {\em Coherent sheaves on $\P^n$ and Problems of Linear Algebra}, Funkt. Anal. Appl. {\bf12} (1979), 214--216.
		
		\bibitem[BGP73]{BGP73} I.N. Bernstein, I.M. Gelfand, and V.A. Ponomarev,
		{\em Coxeter functors, and Gabriel's theorem}, Uspehi. Mat. Nauk {\bf 28}  (1973), no. 2 (170), 19--33.
		
		\bibitem[Br13]{Br13} T. Bridgeland,
		{\em Quantum groups via Hall algebras of complexes}, Ann. Math. {\bf 177} (2013), 739--759.
		
		\bibitem[BS12]{BS12} I. Burban and O. Schiffmann, {\em Two descriptions of the quantum affine algebra $U_v(\widehat{\mathfrak{sl}}_2)$ via Hall algebra approach}, Glasgow Math. J. {\bf 54} (2012), 283--307.
		
		\bibitem[BS13]{BS13} I. Burban and O. Schiffmann,
		{\em The composition Hall algebra of a weighted projective line},
		J. reine angew. Math. {\bf 679} (2013), 75--124.
		
		\bibitem[Cr10]{Cr10} T. Cramer, {\em Double Hall algebras and derived equivalences}, Adv. Math. {\bf 224} (2010), 1097--1120.
		
		
		\bibitem[Da15]{Da15} I. Damiani, {\em From the Drinfeld realization to the Drinfeld-Jimbo presentation of affine quantum algebras: injectivity}, Publ. Res. Inst. Math. Sci. {\bf 51} (2015), 131--171.
		
		\bibitem[DJX12]{DJX12} R. Dou, Y. Jiang and J. Xiao, {\em Hall algebra approach to Drinfeld's presentation of quantum loop algebras}, Adv. Math. {\bf 231} (2012), 2593--2625.
		
		\bibitem[Dr88]{Dr88} V. Drinfeld,
		{\em A new realization of Yangians and quantized affine algebras}, Soviet Math. Dokl. {\bf 36} (1988), 212--216.
		
		\bibitem[Gor13]{Gor13} M. Gorsky,
		{\em Semi-derived Hall algebras and tilting invariance of Bridgeland-Hall algebras}, Int. Math. Res. Not. IMRN 2018, no. 1, 138--159.
		
		\bibitem[Gr95]{Gr95} J.A. Green,
		{\em Hall algebras, hereditary algebras and quantum groups}, Invent. Math. {\bf 120} (1995), 361--377.
		
		\bibitem[Gro57]{Gro57}   A. Grothendieck,
		{\em Sur la classification des fibr\'es holomorphes sur la sph\`ere de Riemann}, Amer. J. Math. {\bf 79} (1957), 121--138.
		
		\bibitem[Ha88]{Ha88} D. Happel, Triangulated Categories in the Representation Theory of
		Finite-dimensional Algebras, London Math. Soc. Lecture Note
		Series, {\bf 119}. Cambridge University Press, Cambridge, 1988.
		
		\bibitem[Ka97]{Ka97} M. Kapranov, {\em Eisenstein series and quantum affine algebras}, J. Math. Sci. {\bf 84} (1997), 1311--1360.
		
		\bibitem[Ke05]{Ke2} B. Keller,
		{\em On triangulated orbit categories}, Doc. Math. {\bf 10} (2005), 551--581.
		
		\bibitem[Ko14]{Ko14} S. Kolb,
		{\em Quantum symmetric Kac-Moody pairs}, Adv. Math. {\bf 267}(2014), 395--469.
		
		\bibitem[Let02]{Let02}
		G. Letzter,
		{\em Coideal subalgebras and quantum symmetric pairs},
		New directions in Hopf algebras (Cambridge), MSRI publications, {\bf 43}, Cambridge Univ. Press, 2002, pp. 117--166.
		
		\bibitem[LinP]{LinP} J. Lin and L. Peng, {\em Semi-derived Ringel-Hall algebras and Hall algebras of odd-periodic relative derived categories}, submitted.
		
		\bibitem[LP21]{LP21} M. Lu and L. Peng,
		{\em Semi-derived Ringel-Hall algebras and Drinfeld doubles}, Adv. Math. {\bf 383} (2021), 107668.
		
		\bibitem[LRW21]{LRW21} M. Lu, S. Ruan and W. Wang,
		{\em $\imath$Hall algebra of Jordan quiver and $\imath$Hall-Littlewood functions}, \href{http://arxiv.org/abs/2104.12336}{arXiv:2104.12336}.
		
		\bibitem[LR21]{LR21} M. Lu and S. Ruan,
		{\em $\imath$Hall algebras of weighted projective lines and quantum symmetric pairs},
		\href{http://arxiv.org/abs/2110.02575}{arXiv:2110.02575}.
		
		\bibitem[LW20]{LW20} M. Lu and W. Wang,
		{\em Hall algebras and quantum symmetric pairs of Kac-Moody type}, \href{http://arxiv.org/abs/2006.06904}{arXiv:2006.06904}
		
		\bibitem[LW21a]{LW21a} M. Lu and W. Wang,
		{\em Hall algebras and quantum symmetric pairs II: reflection functors}, Comm. Math. Phys. {\bf 381} (2021), no. 3, 799--855.
		
		\bibitem[LW21b]{LW21b} M. Lu and W. Wang,
		{\em A Drinfeld type presentation of affine $\imath$quantum groups I: split ADE type}, Adv. Math. {\bf 393} (2021), 108111.
		
		\bibitem[LW22]{LW22} M. Lu and W. Wang,
		{\em Hall algebras and quantum symmetric pairs I: foundations}, (with Appendix by M.~Lu), Proc. Lond. Math. Soc. (3) {\bf 124} (2022), no. 1, 1--82.
		
		\bibitem[Lus90]{Lus90} G. Lusztig,
		{\em Canonical bases arising from quantized enveloping algebras}, J. Amer. Math. Soc. {\bf 3} (1990),  447--498.
		
		\bibitem[Rin84]{Rin84}
		C.M. Ringel, Tame algebras and quadratic forms, Lect. Notes in Math.,
		{\bf 1099}, Springer,  1984.
		
		\bibitem[Rin90]{Rin90} C.M. Ringel,
		{\em Hall algebras and quantum groups}, Invent. Math. {\bf 101} (1990), 583--591.
		
		\bibitem[Sch04]{Sch04} O. Schiffmann,
		{\em Noncommutative projective curves and quantum loop algebras},
		Duke  Math. J. {\bf 121} (2004), 113--168.
		
		\bibitem[Sch12]{Sch12} O. Schiffmann, {\em Lectures on Hall algebras}, In: Geometric methods in representation theory. II, 1--141,
		S\'{e}min. Congr., 24-II, Soc. Math. France, Paris (2012).
		
		
		
		\bibitem[St17]{St17} T. Stai,
		{\em The triangulated hull of periodic complexes}, Math. Res. Lett. {\bf 25} (2017), 199--236.
		
	\end{thebibliography}
\end{document}